\definecolor{rouge}{rgb}{0.7,0.00,0.00}
\definecolor{vert}{rgb}{0.00,0.5,0.00}
\definecolor{bleu}{rgb}{0.00,0.00,0.8}
\newtheorem{theorem}{Theorem}[section]
\newtheorem*{theorem*}{Theorem}
\newtheorem{lemma}[theorem]{Lemma}
\newtheorem{proposition}[theorem]{Proposition}
\newtheorem{condition}{Condition}
\newtheorem{conditionA}{A\kern-0.1mm}
\renewcommand\dots{\hbox to 1em{.\hss.\hss.}}
\theoremstyle{definition}
\newtheorem{example}[theorem]{Example}
\newtheorem{remark}[theorem]{Remark}
\numberwithin{equation}{section}
\newcommand*{\norm}[1]{\left\lVert#1\right\rVert}
\newcommand*{\scal}[2]{\left\langle {#1}, {#2} \right\rangle}
\def\bb#1{\mathbb{#1}}
\def\geq{\geqslant}
\def\leq{\leqslant}
\newcommand\ee{\varepsilon}
\DeclareMathOperator{\supp}{supp}
\DeclarePairedDelimiter\floor{\lfloor}{\rfloor}
\begin{document}

\begin{frontmatter}

\title{Limit theorems for first passage times of \\ multivariate perpetuity sequences}
\runtitle{Limit theorems for multivariate perpetuity sequences}

\begin{aug}
%
\author[A]{\inits{S.}\fnms{Sebastian}~\snm{Mentemeier}\ead[label=e1]{mentemeier@uni-hildesheim.de}}
\and
\author[B]{\inits{H.}\fnms{Hui}~\snm{Xiao}\ead[label=e2]{xiaohui@amss.ac.cn}}

\address[A]{Universit\"at Hildesheim, Institut f\"ur Mathematik und Angewandte Informatik, Hildesheim 31141, Germany\printead[presep={,\ }]{e1}}

\address[B]{Academy of Mathematics and Systems Science, Chinese Academy of Sciences, Beijing 100190, China\printead[presep={,\ }]{e2}}
\end{aug}

\begin{abstract}
We study the first passage time $\tau_u = \inf \{ n \geq 1: |V_n| > u \}$ for the multivariate perpetuity sequence $V_n = Q_1 + M_1 Q_2 + \cdots + (M_1 \ldots M_{n-1}) Q_n$, where $(M_n, Q_n)$ is a sequence of independent and identically distributed random variables with 
$M_1$ a $d \times d$ ($d \geq 1$) random matrix with nonnegative entries, and $Q_1$ a nonnegative random vector in $\mathbb R^d$. Here $|\cdot|$ denotes the vector norm. The exact asymptotic for the probability $\mathbb P (\tau_u < \infty)$ as $u \to \infty$ has been found by Kesten (Acta Math. 1973). In this paper we prove a conditioned weak law of large numbers for $\tau_u$:  conditioned on the event $\{ \tau_u < \infty \}$, $\frac{\tau_u}{\log u}$ converges in probability to a certain constant $\rho > 0$ as $u \to \infty$. A conditioned central limit theorem for $\tau_u$ is also obtained. We further establish precise large deviation asymptotics for the lower probability $\mathbb P (\tau_u \leq (\beta - l) \log u)$ as $u \to \infty$, where $\beta \in (0, \rho)$ and $l \geq 0$ is a vanishing perturbation satisfying $l \to 0$ as $u \to \infty$. Our results extend those of Buraczewski et al. (Ann. Probab. 2016) from the univariate case ($d=1$) to the multivariate case ($d>1$).
As consequences, we deduce exact asymptotics for the pointwise probability $\mathbb P (\tau_u = \floor{(\beta - l) \log u}  )$ and the local probability $\mathbb P (\tau_u - (\beta - l) \log u  \in (a, a + m ] )$, where $a<0$ and $m \in \mathbb Z_+$. We also establish analogous results for the first passage time $\tau_u^y = \inf \{ n \geq 1: \langle y, V_n \rangle > u \}$, where $y$ is a nonnegative vector in $\mathbb R^d$ with $|y| = 1$. 
\end{abstract}

\begin{abstract}[language=french]
Nous \'etudions le temps de premier passage $\tau_u = \inf \{ n \geq 1: |V_n| > u \}$ pour la s\'equence de perp\'etuit\'e multivari\'ee $V_n = Q_1 + M_1 Q_2 + \cdots + (M_1 \ldots M_{n-1}) Q_n$, o\`u $(M_n, Q_n)$ est une s\'equence de variables al\'eatoires ind\'ependantes et identiquement distribu\'ees, avec $M_1$ une matrice al\'eatoire $d \times d$ ($d \geq 1$) \`a entr\'ees non n\'egatives, et $Q_1$ un vecteur al\'eatoire non n\'egatif dans $\mathbb R^d$. Ici, $|\cdot|$ d\'esigne la norme du vecteur. L'asymptotique exacte de la probabilit\'e $\mathbb P (\tau_u < \infty)$ lorsque $u \to \infty$ a \'et\'e trouv\'ee par Kesten (Acta Math. 1973). Dans cet article, nous prouvons une loi des grands nombres faible conditionn\'ee pour $\tau_u$: conditionnellement \`a l'\'ev\'enement $\{ \tau_u < \infty \}$, $\frac{\tau_u}{\log u}$ converge en probabilit\'e vers une certaine constante $\rho > 0$ lorsque $u \to \infty$. Un th\'eor\`eme limite central conditionn\'e pour $\tau_u$ est \'egalement obtenu. Nous \'etablissons en outre des asymptotiques pr\'ecises de grandes d\'eviations pour la probabilit\'e inf\'erieure $\mathbb P (\tau_u \leq (\beta - l) \log u)$ lorsque $u \to \infty$, o\`u $\beta \in (0, \rho)$ et $l \geq 0$ est une perturbation tendant vers z\'ero lorsque $u \to \infty$. Nos r\'esultats \'etendent ceux de Buraczewski et al. (Ann. Probab. 2016) du cas univari\'e ($d=1$) au cas multivari\'e ($d>1$). Comme cons\'equences, nous d\'eduisons les asymptotiques exactes pour la probabilit\'e ponctuelle $\mathbb P (\tau_u = \floor{(\beta - l) \log u}  )$ et la probabilit\'e locale $\mathbb P (\tau_u - (\beta - l) \log u  \in (a, a + m ] )$, o\`u $a<0$ et $m \in \mathbb Z_+$. Nous \'etablissons \'egalement des r\'esultats analogues pour le temps de premier passage $\tau_u^y = \inf \{ n \geq 1: \langle y, V_n \rangle > u \}$, o\`u $y$ est un vecteur non n\'egatif dans $\mathbb R^d$ avec $|y| = 1$. 
\end{abstract}

\begin{keyword}[class=MSC]
\kwd[Primary ]{60F05}
\kwd{60F10}
\kwd[; secondary ]{60B20}
\kwd{60G70}
\end{keyword}

\begin{keyword}
\kwd{multivariate perpetuity sequences}
\kwd{first passage time}
\kwd{law of large numbers}
\kwd{central limit theorem}
\kwd{large deviations}
\kwd{products of random matrices}
\end{keyword}

\end{frontmatter}

\section{Introduction and main results}

\subsection{Introduction}
For any integer $d \geq 1$, 
equip the Euclidean space $\bb R^d$ with the canonical basis $(e_i)_{1\leq i\leq d}$, 
the scalar product $\langle u, v \rangle = \sum_{i = 1}^d u_i v_i$,
and the vector norm $|v| = \sum_{i = 1}^d \langle v, e_i \rangle$. 
Let $\mathbb{R}^{d}_+ = \{ v \in \bb R^d: \langle v, e_i \rangle \geq 0, \forall 1 \leq i \leq d \}$ be the positive quadrant of $\bb R^d$
and set $\mathbb{S}^{d-1}_{+} = \{v \in \mathbb{R}^{d}_+,  |v| = 1\}$. 
Denote by $\mathcal{M}$ the set of $d\times d$  matrices with non-negative entries, 
equipped with the operator norm $\|M \| = \sup \{ |M v|: v \in \mathbb{S}_+^{d-1} \}$. 
Let $(M_n, Q_n)_{n \geq 1}$ be a sequence of 
independent and identically distributed (i.i.d.) random variables taking values in $\mathcal{M} \times \bb R_+^d$,
defined on some probability space $(\Omega, \mathscr F, \bb P)$.
Note that we do not assume that $M_n$ and $Q_n$ are independent. 

In this paper 
we are interested in finding precise asymptotic properties of the first passage time $\tau_u = \inf \{ n \geq 1: |V_n| > u \}$
of the following multivariate perpetuity sequence generated by positive random matrices: with $V_1 = Q_1$, 
\begin{align}\label{Def_Yn01}
V_n = Q_1 + M_1 Q_2 + \cdots + (M_1 \ldots M_{n-1}) Q_n,  \quad  n \geq 2. 
\end{align}
We will also study the directional first passage time $\tau_u^y = \inf \{ n \geq 1: \langle y, V_n \rangle > u \}$  
for any $y \in \bb S_+^{d-1}$.
From a theoretical perspective, the perpetuity sequence is closely related to 
random walks in random environment \cite{KKS75,Key87}, 
weighted branching processes and branching random walks \cite{Gui90, Liu00, AI09, Bur09, Wan13}.
In financial mathematics, it is particularly useful to analyze  
 the ARCH and GARCH time series models \cite{dRR89, BDM02, Mik04}. 
Recently, it has been used for the analysis of stochastic gradient descent in the theory of machine learning \cite{GSZ21, HM21}. 
 Section \ref{sect:examples} contains various examples where our results can be directly applied.

It is well known that if $\bb E (\log \|M_1\|) + \bb E (\log^+ |Q_1|) < \infty$
and the upper Lyapunov exponent of $(M_n)$ is negative, 
then $V_n$ converges almost surely to the random variable
\begin{align*}
V = Q_1 + \sum_{n = 2}^{\infty} (M_1 \ldots M_{n-1}) Q_n, 
\end{align*} 
which is a distributional fixed point of the equation $V \overset{d}{=} M V + Q$; where $\stackrel{d}{=}$ means same law,  
 $(M, Q)$ is an independent copy of $(M_1, Q_1)$, and $V$ is independent of $(M, Q)$. 
The study of asymptotic properties of $V_n$ was pioneered by 
Kesten \cite{Kes73}, who showed that, 
under suitable conditions on $(M_1, Q_1)$, there exist constants $\alpha > 0$ and $\mathscr C >0$ such that, 
as $u \to \infty$, 
\begin{align}\label{Exit_tail_V_n-intro}
\bb P ( \tau_u < \infty ) = \bb P \left( |V| > u \right)
\sim  \mathscr C  u^{-\alpha},
\end{align}
where $f(u) \sim g(u)$ means $f(u)/g(u) \to 1$ as $u \to \infty$. 
Nowadays there is a lot of work concerned with tail behavior of perpetuity sequences, see in particular Goldie \cite{Gol91}
and the book \cite{BDM16} as well as the references therein. 

Our aim is to provide limit theorems for the (normalized) first passage time of $V_n$, 
conditioned on being finite, 
in the spirit of results for first passage time of random walks, motivated by the following connection. As it will become transparent in the results below, the behavior of first passage times for $|V_n|$ will be similar (in many cases) to that of $e^{S_n}$, with $S_n:=\log \norm{M_1 \cdots M_n}$, the latter being a random walk in the one-dimensional setting, and a subadditive sequence in our multivariate setting.

For fundamental contributions to the study of limit theorems  for first passage times of random walks, 
we refer to von Bahr (\cite{Von 74}, Section 8) and Siegmund (\cite{Sie75}, Theorem 2) for a central limit theorem, 
and to Lalley (\cite{Lal84}, Theorem 5) and Buraczewski and Ma\'slanka \cite{BM19} for a large deviation principle.
In the context of compound poisson processes in collective risk theory, such results were obtained even earlier by Arfwedson \cite{Arf55}
and Segerdahl \cite{Seg55}, 
see also Asmussen \cite[Chapter IV, Section 4]{Asm00}.  
First passage times of more general sequences in $\bb R$ were considered, 
for example, by Grandell \cite{Gra91} and Nyrhinen \cite{Nyr94}, 
while multivariate sequences taking values in $\bb R^d$ were studied by 
Collamore \cite{Col98}.

Conditioned limit theorems for the first passage time $\tau_u$ of perpetuities in the one-dimensional case $d=1$ were studied in Buraczewski et al. \cite{BCDZ16, BDZ18}, 
where the general case of $Q \in \bb R$ was also included.
One of the major findings in those papers is that, while the lower large deviations of perpetuity sequences 
and random walks behave similarly, the upper  large deviations of perpetuity sequences exhibit significantly different 
behavior compared with that of random walks, see \cite[Theorem 2.3]{BCDZ16} and the discussion in \cite{BDZ18}.
Motivated by their results, we are going to prove a conditioned law of large numbers, a conditioned central limit theorem
and precise lower large deviation asymptotics 
for both the first passage time $\tau_u$ and the directional first passage time $\tau_u^y$
for multivariate perpetuity sequences. 
By allowing for a vanishing perturbation in the precise large deviation asymptotics 
(see Theorem \ref{Thm_LD_Perpe_Petrov}), we are able to obtain pointwise estimates for the law of $\tau_u$
without relying on a density assumption on $(M_1, Q_1)$ used in \cite{BDZ18}, see Theorem \ref{Cor_LLT_LD_tau} for details. 
Note that, however, we do not address upper large deviations for the case of negative $Q$, which is currently out of reach.

Closely related	to the perpetuity sequence $V_n$ is the so-called {\it forward process}, defined by
\begin{align}\label{Def_Yn02}
	V_n^* := Q_n + M_n Q_{n-1} + \cdots + (M_n \ldots M_2) Q_1,  \quad  n \geq 1. 
\end{align}
Then the process $\{ V_n^* \}$ satisfies the following random difference equation:
with the convention that $V_0^* = 0$, 
\begin{align}\label{Recur_Equ}
	V_n^* = M_n V_{n-1}^* + Q_n,  \quad  n \geq 1.  
\end{align}
Comparing \eqref{Def_Yn01} and \eqref{Def_Yn02}, it is easy to see that for each $n \ge 1$, the marginal distributions $V_n$ and $V_n^*$ of the perpetuity sequence and the corresponding forward process coincide. 
		This amounts to the equality
	\begin{align} \label{eq:forward process exceedances}
		 \bb P(\tau_u \leq n) = \bb P(|V_n| >u)= \bb P(|V_n^*| >u)
	\end{align}
	which allows to study finite-time exceedances of $V_n^*$ as well, see also the discussion in \cite{BCDZ16}. In \cite{GSZ21}, for example, the process $V_n^*$ is used to describe the distance to the minimum after $n$ iterations of stochastic gradient descent, hence large deviations at time $n$ are of interest. Note however, that in \cite{GSZ21}, $M$ is assumed to be a (symmetric) matrix with real-valued entries. Our setting with nonnegative matrices is relevant for example when studying finite time exceedances of GARCH(p,q) processes, which have not been started in the stationary regime.

We now start by first giving all the necessary assumptions and notations, to be able to state our results in full detail.

\subsection{Assumptions and preliminaries}\label{subsec.notations}

Let $\mu$ denote the distribution of $M_1$. 
Consider the random matrix products
\begin{align*}
\Pi_n = M_1 \ldots M_n. 
\end{align*} 
Set 
$$ 
I_{\mu} = \{ s \geq 0: \bb E(\| M_1 \|^s ) < + \infty\}.
$$
By H\"{o}lder's inequality, it is easy to see that $I_{\mu}$ is an interval of $\bb R$.
Let $I_\mu^\circ$ be the interior of $I_\mu$. 
By \cite{BM16},  for any $s\in I_{\mu}$, the following limit exists: 
\begin{align}\label{def-kappa}
\kappa(s) = \lim_{n\to\infty} \left(\bb E  \| \Pi_n \|^{s}\right)^{\frac{1}{n}}, 
\end{align}
and the function $\Lambda = \log\kappa: I_{\mu} \to \bb R$ is convex and analytic on $I_{\mu}^{\circ}$, 
with $\Lambda(0) = 0$. 
We set $\iota(M) = \inf_{x\in \bb S_+^{d-1}} |Mx|$ and $\kappa_Q(s) = \bb E (|Q_1|^s)$ for $s \geq 0$. 
The constant $\alpha$ is introduced by the following assumption.

\begin{conditionA}\label{Condi_ExpMom}
There exist $\alpha \in I_{\mu}^{\circ}$ and $\eta > 0$ such that 
\begin{align}\label{Def_alpha}
\Lambda(\alpha) = 0,  
\end{align}
$\bb E \|M_1 \|^{\alpha + \eta} \iota(M_1)^{-\eta} < \infty$ and $\kappa_Q(\alpha + \eta) < \infty$. 
In addition, $\bb P (Q_1 = 0) < 1$. 
\end{conditionA}
Since the matrix norm is submultiplicative, $S_n = \log \| \Pi_n \|$ is a subadditive process
and it holds by the Furstenberg-Kesten theorem \cite{FK60} that $\lim_{n \to \infty} \frac{S_n}{n} = \Lambda'(0)$, $\bb P$-a.s. 
The identification of this limit is given by \cite[Theorem 6.1]{BDGM14} under the additional condition \ref{Condi_AP}, see below. 
Condition \ref{Condi_ExpMom}
allows in particular to introduce a change of probability measure $\bb P_{\alpha}$ for $S_n$ such that, 
$\lim_{n \to \infty} \frac{S_n}{n} = \Lambda'(\alpha)$, $\bb P_{\alpha}$-a.s.
Note that, due to the convexity of $\Lambda$, condition \ref{Condi_ExpMom} implies $\Lambda'(0) < 0 < \Lambda'(\alpha)$. 
This corresponds to the Cram\'er transform of a random walk with negative drift. 
In our result, we need the inverse drift of the transformed walk: 
\begin{align}\label{Def_rho}
\rho = \frac{1}{\Lambda'(\alpha)}. 
\end{align}

Denote by $\supp \mu$ the support of the measure $\mu$ and by $\Gamma_{\mu}$ 
the smallest closed semigroup of $\mathcal{M}$ generated by $\supp \mu$. 
A matrix $M \in \mathcal{M}$ is called allowable 
if every row and every column of $M$ has a strictly positive entry. 
We use the following allowability and positivity condition. 

\begin{conditionA}\label{Condi_AP}
Every $M \in\Gamma_{\mu}$ is allowable, 
and $\Gamma_{\mu}$ contains at least one strictly positive matrix. 
\end{conditionA}

According to the Perron-Frobenius theorem, a strictly positive matrix $M$ always has a unique dominant eigenvalue $\lambda_{M} >0$. 
We need the following non-arithmeticity condition on the measure $\mu$. 
\begin{conditionA}\label{Condi_NonArith}
The additive subgroup of $\mathbb{R}$ generated by the set
$\{ \log \lambda_{M}: M \in \Gamma_{\mu} \  \mbox{is strictly positive} \}$
is dense in $\mathbb{R}$. 
\end{conditionA}

To present the conditioned central limit theorem and large deviation asymptotics for the directional first passage time $\tau_u^y$,   
we will employ the following boundedness condition which is stronger than \ref{Condi_AP}. 
\begin{conditionA}\label{Condi_Kes_Weak}
There exists a constant $c > 1$ such that  for any $g = (g^{i,j})_{1\leq i, j \leq d} \in \supp \mu$
and $1 \leq j \leq d$, 
\begin{align}\label{Kes_Weak}
 \frac{\max_{1\leq i \leq d} g^{i,j} }{ \min_{1\leq i \leq d} g^{i,j}} \leq c.
\end{align}
\end{conditionA}

Our reason for imposing this condition is that we use the large deviation result for the coefficients of products of random matrices 
(see \eqref{LDPet_ScalarProduct} of Theorem \ref{PetrovThm})
which is only available under \ref{Condi_Kes_Weak}. 

Condition \ref{Condi_Kes_Weak} 
is weaker than the following one introduced by Furstenberg and Kesten \cite{FK60}: 
there exists a constant $c > 1$ such that  for any $g = (g^{i,j})_{1\leq i, j \leq d} \in \supp \mu$,
\begin{align}\label{Condi_Kesten}
 \frac{\max_{1\leq i, j\leq d} g^{i,j} }{ \min_{1\leq i,j\leq d} g^{i,j}} \leq c.
\end{align}

To formulate the subsequent results for the first passage times $\tau_u$ and $\tau_u^y$, 
we need to introduce some additional notation and discuss some results from the literature.  
For allowable $M \in \mathcal{M}$ and $x \in \bb S_+^{d-1}$, 
we write $M \cdot x= \frac{Mx}{|Mx|}$ for the projective action of $M$ on $\bb S_+^{d-1}$.   
Denote $G_n = M_n \ldots M_1$, $n \geq 1$. 
Under condition \ref{Condi_AP}, for any starting point $x \in \bb S_+^{d-1}$, 
the Markov chain $(G_n \cdot x)$ has a unique invariant probability measure $\nu$ on $\mathbb S^{d-1}_+$ 
(cf.\  \cite{Hen97, BDGM14}).

Let $s\in I_{\mu}$. We define the transfer operator $P_s$ and the conjugate transfer operator $P_{s}^{*}$ 
as follows: for any $x\in \bb S_+^{d-1}$ and any bounded measurable function $\varphi$  on $\bb S_+^{d-1}$, 
\begin{align}\label{transfoper001}
\! P_{s}\varphi(x)  = \int_{\Gamma_{\mu}}   |g x |^{s} \varphi( g \cdot x ) \mu(dg),  \quad 
P_{s}^{*}\varphi(x) = \int_{\Gamma_{\mu}}   |g^{\mathrm{T}}x|^{s} \varphi(g^{\mathrm{T}} \cdot x) \mu(dg),
\end{align}
where $g^\mathrm{T}$ is the transpose of $g$. 
Under \ref{Condi_AP}, 
the operator $P_s$ (resp.\ $P_{s}^{*}$) has a unique probability eigenmeasure $\nu_s$ (resp.\ $\nu^*_s$) on $\bb S_+^{d-1}$
satisfying $\nu_s P_s = \kappa(s)\nu_s$ (resp.\ $\nu^*_s P_{s}^{*} = \kappa(s)\nu^*_s$). 
Set, for $x\in \bb S_+^{d-1}$, 
\begin{align}\label{Def_r_s}
r_{s}(x)= \int_{\bb S_+^{d-1}} \langle x, y\rangle^s  \nu^*_s(dy),  \quad
r_{s}^*(x)= \int_{\bb S_+^{d-1}} \langle x, y\rangle^s  \nu_s(dy). 
\end{align}
Then $r_s$ (resp.\ $r^*_s$) is the unique (up to a scaling constant)  
strictly positive eigenfunction of $P_s$ (resp.\ $P_{s}^{*}$): $P_s r_s = \kappa(s)r_s$ (resp.\ $P_{s}^{*} r^*_s = \kappa(s)r^*_s$). 
In particular, $\nu$ is the eigenmeasure of $P_0$. 
See \cite[Proposition 3.1]{BDGM14}.

With this notation, the famous result of Kesten \cite{Kes73} reads: 
under \ref{Condi_ExpMom}, \ref{Condi_AP} and \ref{Condi_NonArith}, 
there exists a constant $\mathscr C >0$ such that 
as $u \to \infty$, 
\begin{align}\label{Exit_tail_V_n}
\bb P ( \tau_u^y < \infty ) = \bb P ( \langle y, V \rangle  > u )  \sim \mathscr C r_{\alpha}^*(y)  u^{-\alpha}, 
\end{align}
for the constant $\alpha > 0$ given by \ref{Condi_ExpMom}. 
This was extended by \cite{BDM02a} and \cite{CM18} to full multivariate regular variation, 
see Lemma \ref{Lem_CM_Weak_Conv} below for details.

\subsection{Laws of large numbers}

We state the conditioned weak laws of large numbers for the first passage times $\tau_u$ and $\tau_u^y$.

\begin{theorem}\label{Thm_LLN_Perpe}
Let $\rho$ be defined by \eqref{Def_rho}. 
Assume \ref{Condi_ExpMom}, \ref{Condi_AP} and \ref{Condi_NonArith}. 
Then, for any $\ee >0$, 
\begin{align*}
\lim_{u \to \infty} \bb P 
\left(  \left. \left| \frac{\tau_u}{ \log u } - \rho \right| > \ee  \, \right|  \tau_u < \infty \right)
= 0, 
\end{align*}
and uniformly in $y \in \bb S_+^{d-1}$, 
\begin{align*}
\lim_{u \to \infty} \bb P 
\left( \left. \left| \frac{\tau_u^y}{ \log u } - \rho \right| > \ee   \,  \right|  \tau_u^y < \infty \right)
= 0. 
\end{align*}
\end{theorem}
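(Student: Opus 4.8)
The plan is to bypass any change of measure and reduce the statement directly to Kesten's tail asymptotic~\eqref{Exit_tail_V_n-intro} by means of the exact identity $\bb P(\tau_u \leq n) = \bb P(|V_n| > u)$ from~\eqref{eq:forward process exceedances}. For fixed $\varepsilon > 0$ one can write
\begin{equation*}
\bb P\!\left( \left|\frac{\tau_u}{\log u} - \rho\right| > \varepsilon \;\Big|\; \tau_u < \infty \right)
= \frac{ \bb P\big( \tau_u < (\rho - \varepsilon)\log u \big) + \bb P\big( (\rho + \varepsilon)\log u < \tau_u < \infty \big) }{ \bb P(\tau_u < \infty) },
\end{equation*}
so, since $\bb P(\tau_u < \infty) \sim \mathscr C u^{-\alpha}$ by~\eqref{Exit_tail_V_n-intro}, it suffices to prove that both numerator terms are $o(u^{-\alpha})$.

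I would first handle the lower-deviation term. For $N = N(u) \leq (\rho-\varepsilon)\log u$, combining~\eqref{eq:forward process exceedances} with Markov's inequality gives $\bb P(\tau_u \leq N) = \bb P(|V_N| > u) \leq u^{-s}\, \bb E|V_N|^s$ for any $s \in I_\mu^\circ$. Expanding $V_N = Q_1 + M_1 Q_2 + \cdots + \Pi_{N-1} Q_N$ as in~\eqref{Def_Yn01}, using Minkowski's inequality when $s \geq 1$ and the bound $|\sum_k a_k|^s \leq \sum_k |a_k|^s$ when $s < 1$, the independence of $\Pi_k$ and $Q_{k+1}$, the estimate $\bb E\|\Pi_k\|^s \leq C_s \kappa(s)^k$ (valid for $s \in I_\mu^\circ$ under~\ref{Condi_AP}; see~\cite{BDGM14,BM16}), and $\kappa_Q(\alpha+\eta) < \infty$ from~\ref{Condi_ExpMom}, one obtains $\bb E|V_N|^s \leq C\, e^{N\Lambda(s)}$ uniformly in $N$, for every $s > \alpha$ close to $\alpha$ (note $\Lambda(s) > 0$ there, so the geometric sum is dominated by its last term). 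Hence $\bb P(\tau_u \leq N) \leq C\, u^{-s + (\rho - \varepsilon)\Lambda(s)}$, and since $\Lambda(\alpha) = 0$ and $\Lambda'(\alpha) = 1/\rho$ (see~\eqref{Def_rho}), the exponent $f(s) = -s + (\rho-\varepsilon)\Lambda(s)$ satisfies $f(\alpha) = -\alpha$ and $f'(\alpha) = -\varepsilon/\rho < 0$; choosing $s$ slightly above $\alpha$ yields $f(s) < -\alpha$, so $\bb P(\tau_u < (\rho-\varepsilon)\log u) = o(u^{-\alpha})$.

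For the upper-deviation term it is enough to show $\bb P(|V_N| > u) \sim \mathscr C u^{-\alpha}$ when $N = \lfloor(\rho+\varepsilon)\log u\rfloor$. The idea is to couple $V_N$ with the stationary $V$ through the decomposition $V = V_N + \Pi_N \widetilde{V}$, where $\widetilde{V} \overset{d}{=} V$ is a nonnegative vector independent of $(\Pi_N, V_N)$; then $\big|\,|V_N| - |V|\,\big| \leq \|\Pi_N\|\,|\widetilde{V}|$, so for fixed $\varepsilon' > 0$,
\begin{align*}
\bb P\big(|V_N| > u\big) &\leq \bb P\big( |V| > (1-\varepsilon')u \big) + \bb P\big( \|\Pi_N\|\,|\widetilde{V}| > \varepsilon' u \big), \\
\bb P\big(|V_N| > u\big) &\geq \bb P\big( |V| > (1+\varepsilon')u \big) - \bb P\big( \|\Pi_N\|\,|\widetilde{V}| > \varepsilon' u \big).
\end{align*}
Applying Markov's inequality with an exponent $s < \alpha$ (so that $\Lambda(s) < 0$ and $\bb E|\widetilde{V}|^s < \infty$, the latter by~\eqref{Exit_tail_V_n-intro}), and using $\bb E\|\Pi_N\|^s \leq C_s\kappa(s)^N$ together with $N \geq (\rho+\varepsilon)\log u - 1$, one gets $\bb P(\|\Pi_N\|\,|\widetilde{V}| > \varepsilon' u) \leq C\, u^{-s + (\rho+\varepsilon)\Lambda(s)}$; the exponent equals $-\alpha$ at $s = \alpha$ with derivative $\varepsilon/\rho > 0$, hence is $< -\alpha$ for $s$ slightly below $\alpha$, so this remainder is $o(u^{-\alpha})$. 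Combining with~\eqref{Exit_tail_V_n-intro} and letting $\varepsilon' \downarrow 0$ gives $\bb P(|V_N| > u) \sim \mathscr C u^{-\alpha}$, whence $\bb P((\rho+\varepsilon)\log u < \tau_u < \infty) = \bb P(|V| > u) - \bb P(|V_N| > u) = o(u^{-\alpha})$, which completes the proof for $\tau_u$. The statement for $\tau_u^y$ follows by the same scheme with $\langle y, \cdot\rangle$ in place of $|\cdot|$: since $\langle y, v\rangle \leq |v|$ for $|y| = 1$ and $v \in \bb R_+^d$, the lower-deviation bound is automatically uniform in $y$; for the upper deviations one replaces~\eqref{Exit_tail_V_n-intro} by the directional asymptotic $\bb P(\langle y, V\rangle > u) \sim \mathscr C r_\alpha^*(y) u^{-\alpha}$ of~\eqref{Exit_tail_V_n}, uniform in $y$ by the multivariate regular variation of $V$ (cf.~Lemma~\ref{Lem_CM_Weak_Conv} and~\cite{BDM02a,CM18}), and uses $|\langle y, \Pi_N\widetilde{V}\rangle| \leq \|\Pi_N\|\,|\widetilde{V}|$; as $r_\alpha^*$ is continuous and strictly positive on the compact set $\bb S_+^{d-1}$, hence bounded away from $0$, the $o(u^{-\alpha})$ errors stay uniformly negligible relative to $\bb P(\tau_u^y < \infty)$.

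The only genuinely technical ingredient is the uniform-in-$n$ moment estimate $\bb E|V_n|^s \leq C\, e^{n\Lambda(s)}$, which rests on the bound $\bb E\|\Pi_n\|^s \leq C_s\kappa(s)^n$ for $s \in I_\mu^\circ$: this is \emph{not} a consequence of the submultiplicativity of $n \mapsto \bb E\|\Pi_n\|^s$ (which yields only the reverse inequality $\bb E\|\Pi_n\|^s \geq \kappa(s)^n$) but follows from the Perron--Frobenius / spectral-gap theory of the transfer operator $P_s$ under~\ref{Condi_AP}. With that granted, everything else is a routine interplay of~\eqref{eq:forward process exceedances}, Markov's inequality and Kesten's tail estimate, the decisive structural fact being the convexity relations $\Lambda(0) = \Lambda(\alpha) = 0$, $\Lambda'(\alpha) = 1/\rho$, which make a one-sided perturbation of the Markov exponent near $s = \alpha$ buy an extra power of $u$ --- turning both numerator terms into $o(u^{-\alpha})$. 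In particular no change of measure is needed for the law of large numbers.
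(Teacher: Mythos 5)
Your proposal is correct, and its skeleton (reduce both conditioned probabilities to Kesten's tail \eqref{Exit_tail_V_n-intro}--\eqref{Exit_tail_V_n}, kill the lower-deviation term by a Chernoff bound with exponent tilted above $\alpha$, and kill the upper-deviation term by writing $V=V_N+\bar V_N$ with $\bar V_N=\Pi_N\widetilde V$ and showing the remainder contributes $o(u^{-\alpha})$) is the same as the paper's. The differences are in how the two error terms are estimated. For the lower deviations you work with a fixed $s>\alpha$, whereas the paper (Lemmas \ref{Lem_Yn_u} and \ref{Lem_tau_Low_b}) takes a $u$-dependent tilt $\ee(u)=\sqrt{\log\log u/\log u}$; for the upper deviations you bound $|\bar V_N|\le\|\Pi_N\|\,|\widetilde V|$ and apply a single Markov bound at a fixed $s<\alpha$, exploiting independence and $\bb E|V|^s<\infty$, whereas the paper's Lemma \ref{Lem_tau_Low_c} expands $\bar V_{N}=\sum_{j\ge N}\Pi_jQ_{j+1}$ and optimizes a $j$-dependent exponent term by term. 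Your versions are genuinely simpler and fully sufficient for the statement as posed (fixed $\ee$), but they do not yield the paper's stronger Theorem \ref{Thm_LLN_Perpe-stronger}, where $\ee$ is replaced by $b\sqrt{\log\log u/\log u}$; that sharper localization is exactly what the finer tuning in Lemmas \ref{Lem_tau_Low_b}--\ref{Lem_tau_Low_c} buys, and those lemmas are reused later in the proof of the central limit theorem. You also correctly identify the one nontrivial ingredient, the uniform bound $\bb E\|\Pi_n\|^s\le c\,\kappa(s)^n$ (the paper's Lemma \ref{Lem_kappa}, via the eigenfunction $r_s$ of $P_s$), and your remark that it is not a consequence of subadditivity is accurate. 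Two small points to keep in mind: your final sandwich uses a fixed $\ee'$ sent to $0$ after $u\to\infty$, while the paper uses a sequence $a(u)\downarrow0$ together with the monotonicity of $\bar V_n$ --- equivalent in effect here; and for the directional statement both your argument and the claimed uniformity in $y$ rest on the uniform-in-$y$ version of \eqref{Exit_tail_V_n}, which, as you note, comes from the multivariate regular variation of $V$ (Lemma \ref{Lem_CM_Weak_Conv}), the paper being equally brief on this point.
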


Indeed, a stronger version of Theorem \ref{Thm_LLN_Perpe} will be established in Theorem \ref{Thm_LLN_Perpe-stronger},
where $\ee$ is replaced by $b  \sqrt{ \frac{ \log \log u }{ \log u} }$ for some constant $b>0$. 
This gives a strong localization of the first passage times. 
Recall that $\rho$ is the inverse drift of $S_n = \log \| \Pi_n \|$ under the change of probability measure $\bb P_{\alpha}$, 
under which $S_n$ goes to infinity. The proof of Theorem \ref{Thm_LLN_Perpe} starts from the observation 
that $V_n$ behaves approximately like $e^{S_n}$. 
Note that the expected time $S_n$ reaches $\log u$ is $\rho \log u$. 
Hence the theorem states that the asymptotic of the hitting time of the perpetuity sequence 
is equal to that of the purely multiplicative process $e^{S_n}$.

\subsection{Central limit theorems} 
Denote by $\Phi$ the standard normal distribution function on $\bb R$. 
By \cite[Lemma 7.2]{BM16},
condition \ref{Condi_NonArith} ensures that $\sigma_{\alpha}: = \sqrt{ \Lambda''(\alpha) } >0$.
Now we state the conditioned central limit theorems for the first passage times $\tau_u$ and $\tau_u^y$. 

\begin{theorem}\label{Thm_CLT_Perpe}
Let $\rho$ be defined by \eqref{Def_rho}. 
If \ref{Condi_ExpMom}, \ref{Condi_AP} and \ref{Condi_NonArith} hold, then for any $t \in \bb R$, 
\begin{align}\label{CLT-tau-u}
\lim_{u \to \infty} \bb P 
\left(  \left. \frac{ \tau_u - \rho \log u  }{ \sigma_{\alpha} \rho^{3/2} \sqrt{\log u} } \leq t 
  \, \right|   \tau_u < \infty  \right) = \Phi(t). 
\end{align}
If \ref{Condi_ExpMom}, \ref{Condi_NonArith} and \ref{Condi_Kes_Weak} hold, 
then for any $t \in \bb R$ and $y \in \bb S_+^{d-1}$, 
\begin{align}\label{CLT-tau-u-y}
\lim_{u \to \infty} \bb P 
\left( \left. \frac{ \tau_u^y - \rho \log u }{ \sigma_{\alpha} \rho^{3/2} \sqrt{\log u} } \leq t 
   \,  \right|  \tau_u^y  < \infty  
\right) = \Phi(t). 
\end{align}
\end{theorem}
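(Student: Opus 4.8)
The plan is to reduce the conditioned CLT for the first passage time to a CLT for the subadditive sequence $S_n = \log \|\Pi_n\|$ (respectively for the coefficients $\log \langle y, G_n e\rangle$), using the heuristic $|V_n| \approx e^{S_n}$ and a duality argument. First I would exploit the identity \eqref{eq:forward process exceedances}, $\bb P(\tau_u \leq n) = \bb P(|V_n^*| > u)$, which lets me work with the forward process $V_n^*$ satisfying the recursion \eqref{Recur_Equ}. Writing $n = n(u,t) = \lfloor \rho \log u + t \sigma_\alpha \rho^{3/2} \sqrt{\log u}\rfloor$, the target probability $\bb P(\tau_u \leq n \mid \tau_u < \infty)$ equals $\bb P(|V_n^*| > u)/\bb P(\tau_u < \infty)$, and by Kesten's asymptotic \eqref{Exit_tail_V_n-intro} the denominator is $\sim \mathscr C u^{-\alpha}$. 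So everything comes down to the asymptotics of $\bb P(|V_n^*| > u)$ for $n$ in the above window, i.e.\ a finite-time large-deviation/CLT-regime estimate for the forward process at a time $n$ that is itself growing like $\rho\log u$.

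The key step is to show that on the relevant scale $|V_n^*|$ is comparable to $\|\Pi_n\|$ (or to a suitable scalar coefficient): using \ref{Condi_AP} one controls $|V_n^*|$ above and below by quantities of the form $\|\Pi_n\| \cdot (\text{bounded functionals of the direction})$, plus error terms coming from the tail sum in \eqref{Def_Yn02}, which are negligible because $\Lambda'(\alpha) > 0$ forces geometric decay of the later terms under the relevant change of measure. Then I would introduce the Kesten change of measure $\bb P_\alpha$ associated with the eigenfunction $r_\alpha$ and eigenvalue $\kappa(\alpha) = 1$ (here $\Lambda(\alpha)=0$ is exactly what makes the tilted measure a probability measure), under which $S_n/n \to \Lambda'(\alpha) = 1/\rho$ a.s.\ and, by \cite[Lemma 7.2]{BM16}, $(S_n - n/\rho)/(\sigma_\alpha \sqrt n)$ is asymptotically standard normal. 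Under $\bb P_\alpha$ the event $\{|V_n^*| > u\}$ becomes (up to the $u^{-\alpha}$ normalization and a bounded Radon–Nikodym factor that converges) the event $\{S_n > \log u + O(1)\}$. Inverting the relation between the threshold $\log u$ and the time $n$ — a standard renewal-type inversion, exactly as for first passage times of random walks, von Bahr and Siegmund — turns the CLT for $S_n$ at fixed time into the CLT for $\tau_u$ at fixed threshold, and the delta-method computation produces precisely the variance $\sigma_\alpha^2 \rho^3$ in \eqref{CLT-tau-u}. For the directional statement \eqref{CLT-tau-u-y} the same scheme applies with $\|\Pi_n\|$ replaced by the coefficient $\langle y, G_n x\rangle$, which is why \ref{Condi_Kes_Weak} is needed: it is exactly the hypothesis under which the large deviation/CLT result for matrix coefficients in Theorem \ref{PetrovThm} (estimate \eqref{LDPet_ScalarProduct}) is available, and that result provides the uniform-in-$y$ control of $\langle y, V_n^*\rangle$ against $\langle y, \Pi_n x\rangle$.

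The main obstacle I expect is twofold. First, making the comparison $|V_n^*| \asymp \|\Pi_n\|$ uniform over the whole CLT window $n \in [\rho\log u - C\sqrt{\log u}\,\log\log u,\ \rho\log u + C\sqrt{\log u}\,\log\log u]$ with errors that are genuinely negligible after dividing by $\bb P(\tau_u < \infty) \sim \mathscr C u^{-\alpha}$: this requires the strong localization already announced as Theorem \ref{Thm_LLN_Perpe-stronger}, and a careful split of $V_n^*$ into a "bulk" part $M_n \cdots M_{k+1} Q_k$-type terms with $k$ near $n$ (which carry the mass) and a "remainder" that must be shown to be $o(u)$ with overwhelming probability on the tilted side. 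Second, transferring the Gaussian fluctuations through this comparison without the $O(1)$ additive errors in $\log|V_n^*| - S_n$ destroying the $\sqrt{\log u}$-scale CLT: since those errors are $O(1)$ and the fluctuation scale is $\sqrt{\log u} \to \infty$, they are asymptotically harmless, but one must check that their distributional limit (a Kesten-type functional of the stationary direction) does not interact with the leading Gaussian term — this is where the regenerative/Markov-renewal structure of $(G_n \cdot x, S_n)$ under $\bb P_\alpha$, together with the spectral-gap properties of $P_\alpha$ from \cite{BDGM14, BM16}, does the work. Modulo these estimates the two displays follow by the same inversion argument, the only difference between \eqref{CLT-tau-u} and \eqref{CLT-tau-u-y} being norm versus coefficient and hence \ref{Condi_AP} versus \ref{Condi_Kes_Weak}.
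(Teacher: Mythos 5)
The central reduction in your plan has a genuine gap. You claim that, after the change of measure, the event $\{|V_n^*|>u\}$ ``becomes the event $\{S_n>\log u+O(1)\}$ up to a bounded Radon--Nikodym factor'', i.e.\ that $\log|V_n^*|-S_n$ is an $O(1)$ correction on the relevant event. This is false on most of the event: by monotonicity of the backward sequence, $\{\tau_u\le n\}=\{|V_n|>u\}$ collects all trajectories whose crossing occurred at \emph{any} time $k\le n$, and by the localization result the crossing time is spread over a window of width of order $\sqrt{(\log u)\log\log u}$ below $n=\eta_{u,t}$; after the crossing, $|V_n|$ stays above $u$ while $e^{S_n}$ typically falls back at linear rate, so $\log|V_n|-S_n$ is of order $\sqrt{\log u}$ (not $O(1)$) on a non-negligible part of the event. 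Quantitatively, your reduction would give $\bb P(|V_n^*|>u)\approx \bb P\big(\log\|\Pi_n\|>\log u+O(1)\big)\asymp u^{-\alpha}e^{-t^2/2}/\sqrt{\log u}$ by \eqref{LDPet_VectorNorm}, which is $o(u^{-\alpha})$ and cannot match $\bb P(\tau_u\le \eta_{u,t})\sim\mathscr C\,u^{-\alpha}\Phi(t)$: comparing with the \emph{terminal} value $S_n$ produces a Gaussian density at the wrong order instead of the Gaussian distribution function. The von Bahr--Siegmund inversion does not repair this, because under the tilt the exceedance probability is a weighted integral of $e^{-\alpha S_n}$ over the event, whose dominant contribution comes precisely from trajectories whose running maximum exceeded $\log u$ while $S_n$ has since dropped by an amount of order $\sqrt{\log u}$; integrating over the size of this drop (equivalently, over the position of the crossing inside the CLT window) is exactly what produces $\Phi(t)$, and it is absent from your sketch. (Relatedly, on the exceedance event the mass of $V_n^*$ is carried by the summands with \emph{long} products, not by the terms ``with $k$ near $n$''.)

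The paper's proof supplies precisely this missing integration. It decomposes $V_{\eta_{u,t}}=V_{\upzeta_u}+\Pi_{\upzeta_u}\mathcal Y_{u,t}$ with $\upzeta_u$ slightly below $\rho\log u$, shows $V_{\upzeta_u}$ is negligible (Lemma \ref{Lem_tau_Low_b}), conditions on the independent block perpetuity $\mathcal Y_{u,t}$, and applies the uniform Bahadur--Rao--Petrov expansion \eqref{LDPet_VectorNorm} (resp.\ \eqref{LDPet_ScalarProduct} under \ref{Condi_Kes_Weak}) with a vanishing perturbation $l_{u,v}$; the random threshold reduction $\log|\mathcal Y_{u,t}|$ ranges over a window of width $\asymp\sqrt{\log u}$, and $\Phi(t)$ arises by integrating the Gaussian factor of the Petrov expansion against the measure $H_{u,t}$, which is shown to converge to a multiple of Lebesgue measure via uniform Kesten-type asymptotics for the block (Lemmas \ref{Lem_V_weak_Uni}, \ref{Lem_V_weak_Uni_bb}, \ref{Lem_Weak_Conver_Hu}), the complementary event being handled by Proposition \ref{Prop_CLT_bbb}. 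Several of your ingredients are the right ones (Kesten's tail \eqref{Exit_tail_V_n-intro} for the denominator, precise large deviations for $\|\Pi_n\|$ and for $\langle y,\Pi_n x\rangle$ explaining the role of \ref{Condi_Kes_Weak}, the localization Theorem \ref{Thm_LLN_Perpe-stronger}), but as written the step that actually generates the limit $\Phi(t)$ is missing and would need to be replaced by a decomposition of the above type rather than a terminal-value comparison $\log|V_n^*|=S_n+O(1)$.
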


Theorem \ref{Thm_CLT_Perpe} together with \eqref{Exit_tail_V_n-intro} and \eqref{Exit_tail_V_n} implies that 
for any $t \in \bb R$ and $y \in \bb S_+^{d-1}$, as $u \to \infty$, 
\begin{align*}
\bb P  \left( \frac{ \tau_u - \rho \log u }{ \sigma_{\alpha} \rho^{3/2} \sqrt{\log u} } \leq t  \right) 
\sim  \mathscr C  u^{-\alpha}  \Phi(t),  \quad 
\bb P  \left( \frac{ \tau_u^y - \rho \log u }{ \sigma_{\alpha} \rho^{3/2} \sqrt{\log u} } \leq t  \right)
\sim  \mathscr C r_{\alpha}^*(y)  u^{-\alpha}  \Phi(t), 
\end{align*}
where $\mathscr C  >0$ is the constant given by \eqref{Exit_tail_V_n-intro}. 

To prove \eqref{CLT-tau-u}, we make use of Bahadur-Rao-Petrov type large deviation asymptotics for
the vector norm $|\Pi_n x|$, which has been established in \cite{XGL19a}, 
see \eqref{LDPet_VectorNorm} of Theorem \ref{PetrovThm}. 
For the proof of \eqref{CLT-tau-u-y}, we apply the precise large deviation asymptotics
for the coefficients $\langle y, \Pi_n x \rangle$,  which has been recently obtained in \cite{XGL22}, 
see \eqref{LDPet_ScalarProduct} of Theorem \ref{PetrovThm}. 
Further, the proof of Theorem \ref{Thm_CLT_Perpe} relies on the following 
result which is a refinement 
of the asymptotic in \eqref{Exit_tail_V_n-intro} by considering 
the joint law with the direction $V/|V|$. 
For any measurable set $A \subseteq \bb S_+^{d-1}$, denote by $\partial A$ the boundary of $A$.

\begin{lemma}[\cite{CM18}]\label{Lem_CM_Weak_Conv}
Assume \ref{Condi_ExpMom}, \ref{Condi_AP} and \ref{Condi_NonArith}. 
Then, for any measurable set $A \subseteq \bb S_+^{d-1}$ with $\nu_{\alpha} (\partial A) = 0$, 
\begin{align}\label{Asymp-Men}
\lim_{u \to \infty}  u^{\alpha} \,  \bb P \left( \frac{V}{|V|} \in A, \,  |V| > u \right)
= \mathscr{C} \,  \nu_{\alpha} (A). 
\end{align}
Moreover, 
\begin{align}\label{Weak_Conver_V}
\lim_{t \to \infty}  e^{\alpha t}
\bb E  \Big[ r_{\alpha} \Big( \frac{V}{|V|} \Big)  \mathds 1_{ \left\{ |V| \geq  e^t  \right\} }  \Big]
= \mathscr{C} \nu_{\alpha}(r_{\alpha}). 
\end{align}
\end{lemma}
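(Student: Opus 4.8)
The statement to prove is Lemma~\ref{Lem_CM_Weak_Conv}, attributed to \cite{CM18}, giving the joint asymptotic \eqref{Asymp-Men} of $|V|$ with its direction $V/|V|$, together with the companion relation \eqref{Weak_Conver_V}.

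\medskip

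The plan is to obtain \eqref{Asymp-Men} as a consequence of the multivariate regular variation of the fixed point $V$, which is the natural strengthening of Kesten's tail estimate \eqref{Exit_tail_V_n}. First I would recall that under \ref{Condi_ExpMom}, \ref{Condi_AP} and \ref{Condi_NonArith}, the results of \cite{BDM02a} and \cite{CM18} show that $V$ is regularly varying with index $\alpha$: there exists a Radon measure on $\bb R_+^d \setminus \{0\}$, necessarily of the product form $c\, t^{-\alpha}\, dt \otimes \sigma(d\theta)$ in polar coordinates $(t,\theta) \in (0,\infty) \times \bb S_+^{d-1}$, such that $u^\alpha\, \bb P\big( |V| > u s,\ V/|V| \in A \big) \to c\, s^{-\alpha}\, \sigma(A)$ for all $s>0$ and all continuity sets $A$. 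Specializing to $s=1$ already gives \eqref{Asymp-Men} up to identifying the angular part. The key point is then to show that the angular measure equals the eigenmeasure $\nu_\alpha$ of the transfer operator $P_\alpha$ (normalized to be a probability measure), with total mass constant $\mathscr C$ as in \eqref{Exit_tail_V_n-intro}. This identification is where one uses the fixed-point structure: iterating $V \overset{d}{=} M V + Q$ and applying the implicit-renewal / spectral approach of \cite{Kes73, BDGM14}, the angular distribution of the large exceedances of $V$ is governed by the projective action $M \cdot x$, whose stationary behavior under the $\alpha$-shifted dynamics is exactly $\nu_\alpha$; comparing with the $A = \bb S_+^{d-1}$ case, which reproduces \eqref{Exit_tail_V_n-intro}, pins down the normalization. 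I would quote this identification from \cite{CM18} rather than rederive it.

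\medskip

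For \eqref{Weak_Conver_V} I would deduce it from \eqref{Asymp-Men} by a standard integration-by-parts / layer-cake argument. Writing $W := |V|$ and $\Theta := V/|V|$, the left-hand side of \eqref{Weak_Conver_V} is $e^{\alpha t}\, \bb E\big[ r_\alpha(\Theta)\, \mathds 1_{\{W \geq e^t\}} \big]$. Since $r_\alpha$ is bounded and bounded away from $0$ on $\bb S_+^{d-1}$ (being a strictly positive continuous eigenfunction on a compact set, cf.\ the discussion after \eqref{Def_r_s}), one may condition on $\Theta$ and use \eqref{Asymp-Men} together with a change of variables $W = e^t r$: more precisely, \eqref{Asymp-Men} extends (by the regular variation recalled above, or by a direct scaling argument applied to the fixed point equation) to
\begin{align*}
\lim_{u \to \infty} u^\alpha\, \bb E\big[ \varphi(\Theta)\, \mathds 1_{\{ W > u s \}} \big]
= \mathscr C\, s^{-\alpha}\, \nu_\alpha(\varphi)
\end{align*}
for every bounded continuous $\varphi$ and every $s>0$, where $\nu_\alpha(\varphi) = \int \varphi\, d\nu_\alpha$. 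Taking $\varphi = r_\alpha$, $u = e^t$ and $s=1$ gives exactly \eqref{Weak_Conver_V} with the constant $\mathscr C\, \nu_\alpha(r_\alpha)$. One must check that the passage from continuity sets $A$ to the continuous test function $r_\alpha$ is legitimate; since $r_\alpha$ is continuous and $\nu_\alpha$ is a probability measure on the compact set $\bb S_+^{d-1}$, this follows from the portmanteau theorem applied to the finite measures $u^\alpha\, \bb P(\Theta \in \cdot,\ W > u)$, which converge weakly (after the tightness coming from boundedness of total mass, itself a consequence of \eqref{Exit_tail_V_n-intro}) to $\mathscr C\, \nu_\alpha$.

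\medskip

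The main obstacle is the identification of the angular measure with $\nu_\alpha$ and the matching of normalizing constants: regular variation of $V$ gives \emph{some} limit measure for free, but showing it is the eigenmeasure of $P_\alpha$ requires the full Kesten-type spectral analysis of the smoothing transform $V \mapsto MV+Q$ on the sphere. Fortunately this is precisely the content of \cite{CM18} (building on \cite{BDM02a, Kes73, BDGM14}), so in the paper this lemma is quoted verbatim; the only work genuinely carried out here is the elementary deduction of \eqref{Weak_Conver_V} from \eqref{Asymp-Men} via the weak-convergence/layer-cake argument above, together with checking that $r_\alpha$ is a legitimate test function because of its continuity and strict positivity on $\bb S_+^{d-1}$.
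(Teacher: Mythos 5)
Your proposal is correct and follows essentially the same route as the paper: the joint asymptotic \eqref{Asymp-Men} is quoted from \cite[Theorem 2.4, Eq. (2.13)]{CM18}, and \eqref{Weak_Conver_V} is then deduced by the Portmanteau theorem, i.e.\ weak convergence of the finite measures $u^{\alpha}\,\bb P\big(V/|V|\in\cdot,\ |V|>u\big)$ to $\mathscr C\,\nu_{\alpha}$ tested against the bounded continuous function $r_{\alpha}$ on $\bb S_+^{d-1}$. The extra layer-cake discussion is unnecessary but harmless.
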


The asymptotic \eqref{Asymp-Men} is a direct consequence of \cite[Theorem 2.4]{CM18}, see (2.13) in \cite{CM18}. 
According to the Portmanteau theorem, \eqref{Weak_Conver_V} follows from \eqref{Asymp-Men}.

\subsection{Growth of moments}\label{subsect:momentgrowth}
If $s > \alpha$, then $\bb E (|V|^{s}) = \infty$. In this section we study the growth rate of $\bb E (|V_n|^s)$. Recalling that the perpetuity sequence $V_n$ has the same marginal distribution as the forward process $V_n^*$, the following result is valid as well when replacing $V_n$ by $V_n^*$. Indeed, the lemma will be proved by considering the forward process.

\begin{lemma}\label{Lem_Martin}
Let $s > \alpha$ with $\alpha$ defined by \eqref{Def_alpha}. Assume that $\bb E (|Q_1|^s) < \infty$. Then, 
the following limits exist, are strictly positive and finite:
\begin{align}\label{Limit-Vn-Vnstar}
\lim_{n \to \infty} \frac{1}{ \kappa(s)^n }  \bb E \left[ |V_n|^s r_s \left( \frac{V_n}{|V_n|} \right) \right]
    \in (0, \infty). 
\end{align} 
In particular, the sequence $\bb E (|V_n^s|) / \kappa(s)^n$ is bounded from above and below. 
\end{lemma}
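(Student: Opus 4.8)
The plan is to work with the forward process $V_n^*$, for which the recursion $V_n^* = M_n V_{n-1}^* + Q_n$ makes the eigenfunction $r_s$ a natural tool. Define the normalized quantity $W_n := \bb E[ |V_n^*|^s r_s(V_n^*/|V_n^*|) ] / \kappa(s)^n$, with the convention that the summand vanishes when $V_n^* = 0$. The first step is to establish a near-supermartingale/submartingale structure: using $|M_n v + Q_n| \leq |M_n v| + |Q_n|$ and $|M_n v + Q_n| \geq |M_n v|$ (valid since all entries and $Q_n$ are nonnegative, so no cancellation occurs) together with the fact that $r_s$ is bounded above and below by positive constants on $\bb S_+^{d-1}$, one derives
\begin{align*}
\bb E\left[ |M_n V_{n-1}^*|^s r_s\!\left( \frac{M_n V_{n-1}^*}{|M_n V_{n-1}^*|} \right) \,\middle|\, V_{n-1}^* \right]
= P_s\big( r_s \big)\!\left( \frac{V_{n-1}^*}{|V_{n-1}^*|} \right) |V_{n-1}^*|^s
= \kappa(s)\, r_s\!\left( \frac{V_{n-1}^*}{|V_{n-1}^*|} \right) |V_{n-1}^*|^s,
\end{align*}
using the eigenfunction identity $P_s r_s = \kappa(s) r_s$ and the $s$-homogeneity of the map $v \mapsto |v|^s r_s(v/|v|)$ in $v \in \bb R_+^d$. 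This is the exact multiplicative scaling that motivates the normalization by $\kappa(s)^n$.

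The second step is to control the perturbation coming from adding $Q_n$. Writing $f(v) := |v|^s r_s(v/|v|)$, which is comparable to $|v|^s$, the key estimate is $|f(M_n V_{n-1}^* + Q_n) - f(M_n V_{n-1}^*)| \lesssim$ a term controlled by $(|M_n V_{n-1}^*| + |Q_n|)^s - |M_n V_{n-1}^*|^s$; taking expectations and using $\bb E|Q_1|^s < \infty$ together with $s > \alpha$ (so that $\kappa(s) > 1$, since $\Lambda$ is convex with $\Lambda(\alpha) = 0$ and $\Lambda'(\alpha) > 0$, whence $\Lambda(s) > 0$) yields a bound of the form
\begin{align*}
\big| \bb E f(V_n^*) - \kappa(s)\, \bb E f(V_{n-1}^*) \big| \leq C_1 \kappa(s)^{n-1} + C_2
\end{align*}
for constants depending on $s$, where the $\kappa(s)^{n-1}$ term absorbs the cross terms (bounded via $\bb E \|M_1\|^s < \infty$, which follows from $\kappa_Q$-type assumptions and \ref{Condi_ExpMom} for $s$ in the relevant range — one should note that the lemma implicitly works for $s \in I_\mu$, or otherwise $\kappa(s) = \infty$ trivializes the lower bound question). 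Dividing by $\kappa(s)^n$ gives $|W_n - W_{n-1}| \leq C_1/\kappa(s) + C_2 \kappa(s)^{-n}$, which is \emph{not} summable, so the naive telescoping does not converge — this is the main obstacle.

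To overcome it, the plan is to instead work multiplicatively and iterate more carefully: one shows $\bb E f(V_n^*) = \kappa(s)^n \bb E f(V_0^*) + \sum_{k=1}^n \kappa(s)^{n-k} \varepsilon_k$ where $|\varepsilon_k| \leq C_3 \kappa(s)^{k-1}$, so that $W_n = \sum_{k=1}^n \kappa(s)^{-k}\varepsilon_k$. Since $\kappa(s) > 1$, each term is bounded by $C_3/\kappa(s)$, and the refined analysis should show the $\varepsilon_k/\kappa(s)^k$ are actually summable with a better rate — the point being that $\varepsilon_k$ is comparable to $\kappa(s)^{k-1}$ only through the leading behavior of $\bb E f(V_{k-1}^*) \asymp \kappa(s)^{k-1}$, and the \emph{relative} perturbation $\varepsilon_k / \bb E f(V_{k-1}^*)$ decays like the tail $\sum_{j \geq k} \bb E[(\text{something})^s]$, which is summable. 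Concretely, one establishes two-sided bounds $c\, \kappa(s)^n \leq \bb E f(V_n^*) \leq C\, \kappa(s)^n$ first (the lower bound from the submartingale-type inequality $\bb E f(V_n^*) \geq \kappa(s)\bb E f(V_{n-1}^*)$, which needs $\bb P(Q_1 = 0) < 1$ and \ref{Condi_AP} to guarantee $\bb E f(V_1^*) > 0$; the upper bound from a Gronwall-type iteration of the perturbation estimate), and then shows $W_n$ is Cauchy by proving $|W_{n+1} - W_n| \leq C_4\, \kappa(s)^{-1} \cdot \rho_n$ with $\rho_n \to 0$, using that the perturbation at step $n$ is small \emph{relative to} the size $\asymp \kappa(s)^n$ of the process. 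The hard part is making this relative smallness quantitative enough to get summability; a clean route is to couple $V_n^*$ with the stationary-type object $\sum_{k \geq 1} M_n \cdots M_{n-k+1} Q_{n-k+1}$ restricted to $k \leq n$ and show the normalized functionals of the truncations form a Cauchy sequence, with error governed by $\kappa(s)^{-n} \bb E[ \|M_1 \cdots M_n\|^s ] \cdot \bb E|Q_1|^s \to 0$ by \eqref{def-kappa}. The final statement about $\bb E|V_n^*|^s / \kappa(s)^n$ being bounded above and below then follows immediately from the comparability $f(v) \asymp |v|^s$ and the coincidence of marginal laws $V_n \stackrel{d}{=} V_n^*$.
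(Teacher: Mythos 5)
You have the right ingredients (the eigenfunction identity $P_s r_s=\kappa(s)r_s$, the forward process, two-sided bounds on $\bb E\,f(V_n^*)/\kappa(s)^n$ with $f(v)=|v|^s r_s(v/|v|)$, positivity via $\bb P(Q_1=0)<1$), but the actual content of \eqref{Limit-Vn-Vnstar} — existence of the limit — is left as an admitted ``hard part,'' and the route you sketch to close it does not work. The decisive observation you miss is that the inequality you yourself record, $\bb E f(V_n^*)\geq \kappa(s)\,\bb E f(V_{n-1}^*)$, says precisely that the normalized sequence $W_n=\kappa(s)^{-n}\bb E f(V_n^*)$ is \emph{nondecreasing}; together with the upper bound $\bb E f(V_n^*)\leq C\kappa(s)^n$ (obtained as you suggest, by expanding $V_n^*$, Lemma \ref{Lem_kappa}, and Minkowski's inequality when $s>1$), a bounded monotone sequence converges, and positivity follows from monotonicity plus $\bb E f(V_1^*)>0$. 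This is exactly the paper's proof: the pathwise quantity $\kappa(s)^{-n}f(V_n^*)$ is a nonnegative submartingale with bounded expectations (the submartingale inequality coming from the integral representation $r_s(x)=\int_{\bb S_+^{d-1}}\langle x,y\rangle^s\nu_s^*(dy)$, which makes $f$ monotone on $\bb R_+^d$ — note that your justification via two-sided bounds on $r_s$ would only give the inequality up to multiplicative constants, which is not enough). No summability, Cauchy estimate, or perturbation analysis is needed; your worry that the increments $W_n-W_{n-1}$ are not summable is a red herring once monotonicity is exploited.

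The substitute argument you propose cannot be repaired along the lines indicated. Its final step claims the error is governed by $\kappa(s)^{-n}\bb E\|M_1\cdots M_n\|^s\cdot\bb E|Q_1|^s\to 0$ ``by \eqref{def-kappa}''; but \eqref{def-kappa} and Lemma \ref{Lem_kappa} only give that $\kappa(s)^{-n}\bb E\|\Pi_n\|^s$ is bounded (in fact it typically converges to a strictly positive constant), so this quantity does not vanish. Moreover, coupling with a ``stationary-type object'' is impossible in this regime: for $s>\alpha$ one has $\kappa(s)>1$ and $\bb E(|V|^s)=\infty$, so there is no limiting random variable in $L^s$ to compare with — the whole point of the lemma is that the $s$-th moments diverge at the exact geometric rate $\kappa(s)^n$. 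The ``relative smallness'' of the perturbation that your Cauchy program requires is never quantified, and I do not see how it could be without essentially reproving the result; the monotonicity observation replaces that entire machinery.
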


The proof of Lemma \ref{Lem_Martin} is given in Section \ref{Sec-Pf-Thms-5-8}. 

\subsection{Precise large deviations}

For any $s > \alpha$ with $\alpha$ given by \eqref{Def_alpha}, let 
\begin{align}\label{Def_C_beta}
\varkappa_s = \frac{ \sqrt{\Lambda'(s)} }{s \sigma_s \nu_s(r_s) \sqrt{2 \pi}} 
 \lim_{n \to \infty} \frac{1}{\kappa(s)^n} 
   \bb E \left[ |V_n|^s r_s \left( \frac{V_n}{|V_n|} \right)  \right]. 
\end{align}
As mentioned before, by \cite[Lemma 7.2]{BM16}, condition \ref{Condi_NonArith} ensures that $\sigma_s >0$ for any $s\in I_{\mu}$.
By  Lemma \ref{Lem_Martin}, $\varkappa_s$ is strictly positive and finite.  

Now we introduce the rate function which is used to describe the polynomial rates of decay  
in the large deviation asymptotics for $\tau_u$ and $\tau_u^y$. 
The Fenchel-Legendre transform of $\Lambda = \log \kappa$ is defined by
\begin{align*}
\Lambda^{\ast}(q)=\sup_{s\in \bb R}\{sq-\Lambda(s)\},   \quad  q\in\Lambda'(I_{\mu}). 
\end{align*}
It follows that $\Lambda^*(q)=s q - \Lambda(s)$ if $q=\Lambda'(s)$ for some $s\in I_{\mu}$.  
For any $\beta = \frac{1}{\Lambda'(s)}$ with $s \in I_{\mu}^{\circ}$, we define $I(\beta) = \beta \Lambda^* ( \frac{1}{\beta} )$
and it holds that
\begin{align}\label{Def_I_beta_s}
I(\beta) = \frac{1}{\Lambda'(s)} \big( s \Lambda'(s) - \Lambda(s)  \big)
        = s - \frac{ \Lambda(s) }{ \Lambda'(s) }. 
\end{align}
Denote $\frac{1}{2} I_{\mu}^{\circ} = \{ s \geq 0: 2s \in I_{\mu}^{\circ} \}$. 
For $l, u \geq 0$ and $\beta = \frac{1}{\Lambda'(s)}$ with $s \in I_{\mu}^{\circ}$,
we denote 
\begin{align}\label{def-chi-C-beta-l-u}
\chi_{\beta, l} (u) = (\beta - l) \log u - \floor{(\beta - l) \log u} \quad  \mbox{and}  \quad 
\mathscr C_{\beta, l} (u) = \varkappa_s \kappa(s)^{- \chi_{\beta, l} (u)}. 
\end{align}
Since $\chi_{\beta, l} (u) \in [0, 1)$, we have that $\mathscr C_{\beta, l} (u)$ is strictly positive and finite.  

Now we state the precise large deviation results for the first passage times $\tau_u$ and $\tau_u^y$, 
where a vanishing perturbation $l$ on $\beta$ is allowed.

\begin{theorem}\label{Thm_LD_Perpe_Petrov}
Let $\beta \in (0, \rho)$ with $\rho$ defined by \eqref{Def_rho}. 
Assume that there exists $s \in \frac{1}{2} I_{\mu}^{\circ}$ such that $\beta = \frac{1}{\Lambda'(s)}$. 
If \ref{Condi_ExpMom}, \ref{Condi_AP} and \ref{Condi_NonArith} hold, 
then for any $\epsilon >0$, as $u \to \infty$, uniformly in $l \in [0, (\log u)^{-\epsilon}]$, 
\begin{align}\label{Petrov_LD_tau}
\bb P (\tau_u \leq (\beta - l) \log u) 
\sim \frac{ \mathscr C_{\beta, l} (u)  }{\sqrt{\log u}} 
  u^{-I(\beta - l)}. 
\end{align}
If \ref{Condi_ExpMom}, \ref{Condi_NonArith} and \ref{Condi_Kes_Weak} hold, 
then for any $\epsilon >0$, as $u \to \infty$, uniformly in $y \in \bb S_+^{d-1}$ and $l \in [0, (\log u)^{-\epsilon}]$, 
\begin{align}\label{Petrov_LD_tau_y}
\bb P \left( \tau_u^y \leq (\beta - l) \log u \right) 
\sim  r_s^*(y) \frac{ \mathscr C_{\beta, l} (u) }{\sqrt{\log u}} 
  u^{-I(\beta - l)}. 
\end{align}
\end{theorem}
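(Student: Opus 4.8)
The plan is to reduce the large deviation asymptotics for $\tau_u$ to the known Bahadur–Rao–Petrov type asymptotics for $|\Pi_n x|$ (resp.\ for the coefficients $\langle y, \Pi_n x\rangle$), by first replacing the event $\{\tau_u \leq (\beta - l)\log u\}$ with $\{|V_n^*| > u\}$ at time $n = \floor{(\beta - l)\log u}$ via the identity \eqref{eq:forward process exceedances}, and then showing that $V_n^*$ behaves, at the level of precise polynomial asymptotics, like a single well-chosen term of the forward series. Write $n = n(u) = \floor{(\beta - l)\log u}$; note $\beta = 1/\Lambda'(s)$ and since $\beta \in (0,\rho)$ we have $\Lambda'(s) > \Lambda'(\alpha) > 0$, so $s > \alpha$ and $\Lambda(s) > 0$, and $I(\beta - l) > 0$. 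The target exponent $u^{-I(\beta-l)}$ rewrites, using $n \approx (\beta-l)\log u$ and \eqref{Def_I_beta_s}, as $\kappa(s)^{n} u^{-(\beta-l)\Lambda'(s)\cdot\frac{1}{\beta-l}\cdot(\beta-l)} = \kappa(s)^{n} u^{-s}$ up to the fractional-part correction, which is precisely what the constant $\mathscr{C}_{\beta,l}(u) = \varkappa_s\kappa(s)^{-\chi_{\beta,l}(u)}$ is designed to absorb. So the heuristic is: $\bb P(|V_n^*| > u) \approx \bb E[|V_n^*|^s]\, u^{-s}\cdot(\text{BRP density factor})$, and $\bb E[|V_n^*|^s] \sim (\text{const})\,\kappa(s)^n$ by Lemma \ref{Lem_Martin}.

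The main steps, in order. First, an \emph{upper bound}: condition on $(M_1,\dots,M_n)$; then $V_n^* = \sum_{k=1}^n (M_n\cdots M_{k+1})Q_k$ (with the empty product $=I$), so $|V_n^*| \leq \sum_k \|M_n\cdots M_{k+1}\|\,|Q_k|$; a union bound over $k$, combined with the precise large deviation asymptotic \eqref{LDPet_VectorNorm} (resp.\ \eqref{LDPet_ScalarProduct}) of Theorem \ref{PetrovThm} applied to each block $M_n\cdots M_{k+1}$ acting on the (random, but controllable) direction of $Q_k$, and the moment bound $\kappa_Q(\alpha+\eta) < \infty$ from \ref{Condi_ExpMom} together with $\bb E(|Q_1|^s) < \infty$ via the hypothesis $s \in \tfrac12 I_\mu^\circ$, should show that the dominant contribution comes from the full-length term $\Pi_n^{(n)} := M_n \cdots M_1$ hitting $u$, and that the tail contributions from $k \geq 1$ decay geometrically (this is where $s \in \tfrac12 I_\mu^\circ$, giving room for a Cauchy–Schwarz / second-moment split, is used, exactly as in the one-dimensional proof of \cite{BCDZ16}). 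Second, a matching \emph{lower bound}: restrict to the event that the first summand $Q_n$ together with $M_n\cdots M_2$ already produces $|V_n^*| > u$, and use that the remaining terms are nonnegative (here the positivity of $M_n$ and $Q_n$ is essential — no cancellation) together with \ref{Condi_AP} to control directions; then invoke the spectral-gap / regeneration machinery behind \eqref{LDPet_VectorNorm} to get the precise constant, and feed in Lemma \ref{Lem_CM_Weak_Conv} / the joint convergence of $(V_n/|V_n|, |V_n|)$ to identify the limiting constant as $\varkappa_s$ through \eqref{Def_C_beta}. Third, handle the \emph{fractional part}: since $n = \floor{(\beta-l)\log u}$ and $u = e^{(\beta-l)\log u} = e^{n + \chi_{\beta,l}(u)}$, the BRP asymptotic applied at level $\log u$ rather than at the ``natural'' level $n\Lambda'(s)$ produces exactly the factor $\kappa(s)^{-\chi_{\beta,l}(u)}$, giving $\mathscr{C}_{\beta,l}(u)$; the $1/\sqrt{\log u}$ comes from the Gaussian normalization $1/(\sigma_s\sqrt{n})$ in \eqref{LDPet_VectorNorm} since $n \asymp \log u$. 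Fourth, \emph{uniformity in $l \in [0,(\log u)^{-\epsilon}]$ and in $y$}: all the estimates above are quantitative in $n$ and in the target level; since $l \to 0$ the point $s = s(\beta - l)$ stays in a fixed compact subinterval of $I_\mu^\circ$ on which $\Lambda,\Lambda',\Lambda'',\kappa,\varkappa_\cdot$ and the eigenelements $r_s, r_s^*, \nu_s$ vary continuously (analyticity of $\Lambda$ on $I_\mu^\circ$, continuity of the Perron–Frobenius data), and the BRP asymptotics of \cite{XGL19a,XGL22} are uniform on such compacts and uniform in the acting direction; uniformity in $y$ for $\tau_u^y$ is built into \eqref{LDPet_ScalarProduct}, which is the reason \ref{Condi_Kes_Weak} is imposed.

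The hardest part, I expect, is the \emph{lower bound with the sharp constant}: one must show that the ``overshoot'' and the direction $V_n^*/|V_n^*|$ decouple asymptotically and that the full perpetuity limit (the constant in Lemma \ref{Lem_Martin}, i.e.\ $\lim_n \kappa(s)^{-n}\bb E[|V_n|^s r_s(V_n/|V_n|)]$) is the right normalization — equivalently, that no single intermediate term and no ``collaboration'' of several terms produces a contribution of the same polynomial order with a different constant. In the one-dimensional case \cite{BCDZ16} this is handled by a careful truncation of the perpetuity series and a change of measure $\bb P_s$; transplanting this requires the transfer-operator formalism $P_s, r_s, \nu_s$ and the quasi-compactness of $P_s$ on a suitable Hölder space (as in \cite{BDGM14,BM16}) to perform the tilting by $r_s(V_n/|V_n|)$ and to obtain the limiting law of the tilted direction as $\propto r_s\,\nu_s$. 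A secondary technical point is making the union-bound error terms in the upper bound genuinely summable uniformly in $u$ and $l$; this is where the moment gap afforded by $2s \in I_\mu^\circ$ (rather than merely $s \in I_\mu^\circ$) does the work, allowing a split at a threshold $k_0 = k_0(u) \to \infty$ slowly, with the head $k \leq k_0$ controlled by the precise BRP asymptotic and the tail $k > k_0$ by a crude $s$-th moment (Markov) bound that is $o$ of the main term.
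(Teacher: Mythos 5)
Your overall architecture --- reduce to $\bb P(|V_n|>u)$ at $n=\floor{(\beta-l)\log u}$, split off a long matrix product acting on a short block whose length grows slowly with $u$, invoke the perturbed Bahadur--Rao--Petrov asymptotics of Theorem \ref{PetrovThm}, use $2s\in I_\mu^\circ$ for the remainder estimates, and absorb the fractional part into $\kappa(s)^{-\chi_{\beta,l}(u)}$ --- is indeed the paper's strategy. The genuine gap is in how you propose to extract the sharp constant. Your upper bound is a term-by-term union bound, applying \eqref{LDPet_VectorNorm} to each block $M_n\cdots M_{k+1}$ separately, and your lower bound restricts to the event that a single summand already exceeds $u$. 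Neither can produce $\varkappa_s$: summing single-term estimates yields a constant of the form $c\,\kappa_Q(s)\sum_{k}\kappa(s)^{-k}$, while the one-term lower bound yields a constant driven by $\bb E\big[|Q|^s r_s(\cdot)\big]$ alone; since $|V_m|^s\neq\sum_j|\Pi_{j-1}Q_j|^s$ for $s\neq 1$, these two do not match each other, nor the true constant, which by \eqref{Def_C_beta} is the limit in Lemma \ref{Lem_Martin}, i.e.\ the cumulative $s$-moment (weighted by $r_s$) of an entire perpetuity whose length tends to infinity. In other words, the ``collaboration of several terms'' that you hope to rule out is exactly what creates the constant. The missing step is the joint treatment: with $\upzeta_u=\floor{(\beta-l)\log u-a\log\log u}$ and $\beta_u=\floor{(\beta-l)\log u}$, write $V_{\beta_u}=V_{\upzeta_u}+\Pi_{\upzeta_u}\mathcal Y_u$, where $\mathcal Y_u$ is the perpetuity over the last $\approx a\log\log u$ indices and is independent of $\Pi_{\upzeta_u}$; discard $V_{\upzeta_u}$ by a crude Chernoff bound (Lemma \ref{Lem_Yn_u}, giving Lemma \ref{Lem_tau_Low}); condition on $\mathcal Y_u$ and apply \eqref{LDPet_VectorNorm} (resp.\ \eqref{LDPet_ScalarProduct}) uniformly in the random direction $\mathcal Y_u/|\mathcal Y_u|$ and in the perturbation induced by $\log|\mathcal Y_u|$, on the event $\{|\log|\mathcal Y_u||\le(\log u)^{\epsilon/2}\}$; the main term then carries the factor $\kappa(s)^{-(\beta_u-\upzeta_u)}\,\bb E\big[r_s(\mathcal Y_u/|\mathcal Y_u|)\,|\mathcal Y_u|^s;\cdot\big]$, and since the block length diverges, Lemma \ref{Lem_Martin} (plus a check that the truncation is harmless) turns this into $\varkappa_s$. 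The complementary event on $\log|\mathcal Y_u|$ is where $2s\in I_\mu^\circ$ is genuinely needed (moments of the short block at an exponent $t>s$, as in Remark \ref{Rem-Inequality-Vn}).

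Two secondary corrections. First, the constant is not identified through Lemma \ref{Lem_CM_Weak_Conv}: that lemma concerns the $\alpha$-tilted tail of the stationary $V$ and is the engine of the conditioned CLT, not of the large deviation constant, which comes from the $s$-moment submartingale limit of Lemma \ref{Lem_Martin}. Second, $\bb E(|Q_1|^s)<\infty$ does not follow from $s\in\frac12 I_\mu^\circ$ (that hypothesis controls $\|M_1\|$-moments only), so the $Q$-moment assumptions entering Lemmas \ref{Lem_Yn_u} and \ref{Lem_Martin} must be invoked separately. Your instinct that the split threshold should tend to infinity slowly is correct --- it must be of order $\log\log u$ so that Lemma \ref{Lem_Martin} applies while the discarded head remains $o\big(u^{-I(\beta-l)}/\sqrt{\log u}\big)$ --- but the upper and lower bounds have to be rebuilt around this joint block decomposition rather than around individual summands.
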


For $d=1$, 
our results \eqref{Petrov_LD_tau} and \eqref{Petrov_LD_tau_y}
partially extend that of 
Buraczewski, Collamore, Damek and Zienkiewicz \cite{BCDZ16} by considering a vanishing perturbation $l$ on $\beta$
(however, we do not include the case of negative $Q$). 
This extension turns out to be very useful to deduce new local limit theorems of $\tau_u$ and $\tau_u^y$,
see Section \ref{sec-LLT} below.

In particular, the asymptotic \eqref{Petrov_LD_tau} clearly implies the following large deviation principle which is also new:
uniformly in $l \in [0, (\log u)^{-\epsilon}]$, 
\begin{align*}
\lim_{u \to \infty} \frac{1}{\log u}  \log  \bb P \Big( \tau_u \leq (\beta - l) \log u \Big) 
= -I(\beta). 
\end{align*}
A similar consequence of \eqref{Petrov_LD_tau_y} can also be obtained.

\begin{remark}
Taking $l = 0$ in Theorem \ref{Thm_LD_Perpe_Petrov}, we get that, as $u \to \infty$, 
\begin{align}
\bb P (\tau_u \leq \beta \log u) 
& \sim  \frac{ \mathscr C_{\beta, 0} (u) }{\sqrt{\log u}} u^{-I(\beta)},    \label{Bahadur_Rao_LD_tau} \\
\bb P \left( \tau_u^y \leq \beta \log u \right) 
& \sim  r_s^*(y) \frac{ \mathscr C_{\beta, 0} (u) }{\sqrt{\log u}} u^{-I(\beta)}.   \label{Bahadur_Rao_LD_tau_y}
\end{align}
\end{remark}

\begin{remark}
We will see in Lemma \ref{Lem_Expan_Rate} that the rate function $I(\beta - l)$ has the asymptotic expansion 
with respect to the perturbation $l$ in a small neighborhood of $0$:
\begin{align}\label{Expan_I_beta_l}
I(\beta - l)  = I(\beta) + \frac{l}{\beta - l} I(\beta) + \frac{sl}{\beta (\beta - l)^2}  
   + \frac{l^2}{ 2 \sigma_s^2 \beta^2 (\beta - l)^3 } 
   - \frac{l^3}{\sigma_s^3 \beta^3 (\beta - l)^4 } \xi_s \left( \frac{l}{ \sigma_s \beta (\beta - l) } \right),
\end{align}
where $\xi_s$ is the Cram\'{e}r series with  
coefficients given in terms of the cumulant generating function $\Lambda = \log \kappa$ at the point $s$, cf.\ \eqref{def-xi-s-t}. 
\end{remark}

\begin{remark}\label{Rem-Upper-LD}
In \cite{BCDZ16, BDZ18}, interesting results on upper large deviations are also obtained and it turns out that the story is completely different.
There are examples given in \cite[Theorem 2.3]{BCDZ16}  and \cite[Theorem 1.14]{BDZ18} 
showing that the decay 
may occur at a slower rate, such as
$u^{- I(\beta) + \delta}$ for some $\delta > 0$. 
Extending these results to the multivariate setting remains beyond the current scope of research. 
\end{remark}

\subsection{Local limit theorems}\label{sec-LLT}

The vanishing perturbation $l$ in \eqref{Petrov_LD_tau} and \eqref{Petrov_LD_tau_y} turns out to be very useful
for deducing precise local limit theorems for the first passage times $\tau_u$ and $\tau_u^y$. 
Note that from \eqref{Expan_I_beta_l} we have 
\begin{align*}
I'(\beta) = - \frac{1}{\beta} I(\beta) - \frac{s}{\beta^3}. 
\end{align*} 
Denote $\bb R_- = (-\infty, 0]$ and $\bb Z_+ = \{1, 2, 3, \ldots \}$.

\begin{theorem}\label{Cor_LLT_LD_tau}
Let $\beta \in (0, \rho)$ with $\rho$ defined by \eqref{Def_rho}. 
Assume that there exists $s \in \frac{1}{2} I_{\mu}^{\circ}$ such that $\beta = \frac{1}{\Lambda'(s)}$. 
Let $a \in \bb R_-$ and $m \in \bb Z_+$ be such that $a + m \leq 0$.  
If \ref{Condi_ExpMom}, \ref{Condi_AP} and \ref{Condi_NonArith} hold, 
then for any $\epsilon >0$, as $u \to \infty$, uniformly in $l \in [0, (\log u)^{-\epsilon}]$, 
\begin{align*}
 \bb P \Big(\tau_u - (\beta - l) \log u  \in (a, a + m ] \Big)  
 \sim e^{a I'(\beta)} \left( e^{m I'(\beta)} - 1 \right)
    \frac{ \mathscr C_{\beta, l} (u) }{\sqrt{\log u}} 
    u^{-I(\beta - l)}. 
\end{align*}
If \ref{Condi_ExpMom}, \ref{Condi_NonArith} and \ref{Condi_Kes_Weak} hold, 
then for any $\epsilon >0$, as $u \to \infty$, uniformly in $y \in \bb S_+^{d-1}$ and $l \in [0, (\log u)^{-\epsilon}]$, 
\begin{align*}
 \bb P \Big(\tau_u^y - (\beta - l) \log u  \in (a, a + m ] \Big)  
 \sim  r_s^*(y) e^{a I'(\beta)} \left( e^{m I'(\beta)} - 1 \right)
    \frac{ \mathscr C_{\beta, l} (u) }{\sqrt{\log u}} 
    u^{-I(\beta - l)}. 
\end{align*}
\end{theorem}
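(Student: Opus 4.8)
The plan is to derive the local limit theorem by a telescoping/differencing argument from the precise large deviation asymptotics of Theorem \ref{Thm_LD_Perpe_Petrov}, exploiting the freedom to perturb $\beta$ by a vanishing quantity $l$. The starting observation is that for $a \in \bb R_-$ and $m \in \bb Z_+$ with $a + m \leq 0$, the event $\{\tau_u - (\beta-l)\log u \in (a, a+m]\}$ can be written as the difference of two "level-set" events:
\begin{align*}
\bb P\Big(\tau_u - (\beta-l)\log u \in (a, a+m]\Big)
= \bb P\big(\tau_u \leq (\beta-l)\log u + a + m\big) - \bb P\big(\tau_u \leq (\beta-l)\log u + a\big).
\end{align*}
The crucial point is that $(\beta-l)\log u + a = (\beta - l')\log u$ where $l' = l - a/\log u$, and since $a \leq 0$ we have $l' \geq l \geq 0$; moreover $l' \leq (\log u)^{-\epsilon} + |a|(\log u)^{-1} \leq (\log u)^{-\epsilon'}$ for any $\epsilon' < \min(\epsilon, 1)$ once $u$ is large. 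Hence both terms on the right-hand side fall within the scope of Theorem \ref{Thm_LD_Perpe_Petrov} with a \emph{uniform} perturbation range, so each is asymptotically equivalent to $\mathscr C_{\beta, l'}(u)(\log u)^{-1/2} u^{-I(\beta - l')}$ with the appropriate $l'$ (and with an extra factor $r_s^*(y)$ in the directional case).

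The heart of the argument is then the asymptotic analysis of the difference
\begin{align*}
\frac{\mathscr C_{\beta, l - (a+m)/\log u}(u)}{\sqrt{\log u}}\, u^{-I(\beta - l + (a+m)/\log u)}
 - \frac{\mathscr C_{\beta, l - a/\log u}(u)}{\sqrt{\log u}}\, u^{-I(\beta - l - a/\log u)}
\end{align*}
relative to the reference quantity $\mathscr C_{\beta, l}(u)(\log u)^{-1/2} u^{-I(\beta-l)}$. Here I would use the expansion \eqref{Expan_I_beta_l} (or rather its first-order form $I'(\beta) = -I(\beta)/\beta - s/\beta^3$, valid at the unperturbed point): writing $\delta = a/\log u$ or $(a+m)/\log u$, one has $u^{-I(\beta - l + \delta)} = u^{-I(\beta-l)} \exp\big(- \log u \cdot [I(\beta - l + \delta) - I(\beta - l)]\big)$, and since $\log u \cdot \delta = a$ or $a+m$ is a fixed constant while $l, \delta \to 0$, a Taylor expansion of $I$ around $\beta$ (using that $l = O((\log u)^{-\epsilon})$ makes the difference between $I'(\beta-l)$ and $I'(\beta)$ negligible after multiplication by the bounded quantity $a$) gives $\log u \cdot [I(\beta - l + \delta) - I(\beta-l)] \to -a I'(\beta)$ respectively $-(a+m)I'(\beta)$. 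Simultaneously one checks that the prefactor ratio $\mathscr C_{\beta, l'}(u)/\mathscr C_{\beta, l}(u) = \kappa(s)^{\chi_{\beta,l}(u) - \chi_{\beta, l'}(u)} \to 1$: indeed $\chi_{\beta,l}(u) - \chi_{\beta, l - \delta}(u) \equiv \delta \log u \pmod 1$ equals the fractional part shift, but combined with the exponent $-I$ the two discrete corrections conspire so that the product $\mathscr C_{\beta,l'}(u) u^{-I(\beta-l')}$ behaves, up to $u^{-I(\beta-l)}(1+o(1))$ factors, like $\mathscr C_{\beta,l}(u) u^{-I(\beta-l)} e^{(\cdot)I'(\beta)}$ — here one uses $\kappa(s)^{-1} = e^{-\Lambda(s)}$ together with $I'(\beta) = s \cdot \tfrac{d}{d\beta}\tfrac{1}{\beta}\big|\ldots$ to see that $\kappa(s)^{\text{(fractional shift)}}$ is exactly compensated by the non-integer part of $u^{-I}$; this is the same bookkeeping that makes $\mathscr C_{\beta,l}(u)$ well-defined and periodic, and I expect it to work out to the clean constant $e^{aI'(\beta)}$. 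Collecting terms yields the claimed factor $e^{aI'(\beta)}(e^{mI'(\beta)} - 1)$.

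The main obstacle I anticipate is precisely this delicate cancellation in the third step: one must verify that the floor-function corrections encoded in $\mathscr C_{\beta,l}(u)$ and in the non-integer exponent $u^{-I(\beta-l)}$ recombine correctly under the shift $l \mapsto l - a/\log u$ so as to produce exactly $e^{aI'(\beta)}$ with no leftover oscillatory factor depending on the fractional part of $\beta \log u$. The key identity to exploit is that $\floor{(\beta-l)\log u + a} = \floor{(\beta-l)\log u} + a$ when $a \in \bb Z$ — but $a$ is only assumed real, so in general one must track $\floor{(\beta-l)\log u + a} - \floor{(\beta-l)\log u}$, which is $a$ rounded in a $\chi_{\beta,l}(u)$-dependent way; the resolution is that $u^{-I(\beta-l)} = \kappa(s)^{-\floor{(\beta-l)\log u}\,\cdot(\text{something})}\times(\ldots)$ — more carefully, one writes $u^{-I(\beta-l)} = e^{-(\beta-l)\log u \cdot \Lambda^*(1/(\beta-l))}$ and splits $(\beta-l)\log u = \floor{(\beta-l)\log u} + \chi_{\beta,l}(u)$, at which point the integer part pairs with $\kappa(s)^{\pm 1}$ factors coming from the discrete structure of $\{\tau_u = n\}$ and the fractional part gives $\mathscr C_{\beta,l}(u)$. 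I would handle this by first treating the case $a, m \in \bb Z$ (where everything is transparent: $\mathscr C_{\beta, l - (a+m)/\log u}(u) = \mathscr C_{\beta,l}(u)$ exactly and $u^{-I(\beta-l+\text{integer}/\log u)} = u^{-I(\beta-l)} \kappa(s)^{\ldots}$ with an explicit integer power), establishing the result there, and then passing to general real $a$ by a monotonicity-plus-sandwiching argument between nearby integer values combined with continuity of $e^{aI'(\beta)}$ in $a$ — this last step uses that $\bb P(\tau_u \le \cdot)$ is monotone in its argument. All other estimates (uniformity in $y$, uniformity in $l$) are inherited directly from Theorem \ref{Thm_LD_Perpe_Petrov} without additional work.
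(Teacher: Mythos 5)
Your overall route is the same as the paper's: write the window probability as the difference
\begin{align*}
\bb P\big(\tau_u \leq (\beta-l)\log u + a + m\big) - \bb P\big(\tau_u \leq (\beta-l)\log u + a\big),
\end{align*}
observe that the shifted thresholds are of the form $(\beta-l')\log u$ with $l' = l-(a+m)/\log u$ resp.\ $l-a/\log u$ still admissible (your remark about enlarging the range to $(\log u)^{-\epsilon'}$ is a point the paper glosses over and you handle correctly), apply Theorem \ref{Thm_LD_Perpe_Petrov} to both terms, and then extract the constant from the Taylor expansion of $I$ via Lemma \ref{Lem_Expan_Rate}. Up to this point your sketch coincides with the paper's proof.

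The gap is in the step you yourself flag as the main obstacle, the floor/fractional-part bookkeeping, and your proposed fallback does not repair it. The integrality that matters is that of $m$, not of $a$: since $m\in\bb Z_+$ is an assumption of the theorem, $\floor{(\beta-l)\log u + a + m} = \floor{(\beta-l)\log u + a} + m$, so the two applications of \eqref{Petrov_LD_tau} carry \emph{identical} prefactors $\varkappa_s\,\kappa(s)^{-\left((\beta-l)\log u + a - \floor{(\beta-l)\log u + a}\right)}$, which factor out of the difference; the exponential factors in $a$ and $m$ then come solely from expanding $u^{-I(\beta-l+(a+m)/\log u)} - u^{-I(\beta-l+a/\log u)}$. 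No cancellation \emph{within} a single term occurs (there the oscillatory factor genuinely remains), so the "conspiracy" you hope for has to be located in the difference, and it is a one-line observation rather than a delicate identity. Your fallback — prove the result for $a,m\in\bb Z$ and pass to real $a$ by monotone sandwiching plus continuity of $e^{aI'(\beta)}$ — cannot deliver the precise asymptotic: for $a\in(a'-1,a')$ the available integer-endpoint comparisons, e.g.\ $(a'-1,a'+m]$ from above and $(a',a'+m-1]$ from below, have asymptotic constants differing from the target by fixed factors (powers of $e^{I'(\beta)}$, equivalently of $\kappa(s)$) bounded away from $1$, and since $\tau_u$ is integer-valued the limiting constant is genuinely sensitive to the fractional part of $(\beta-l)\log u + a$, so no continuity argument on a unit mesh can pinch the bounds. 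There is also no need for such a detour: Theorem \ref{Thm_LD_Perpe_Petrov} applies directly to real shifts $a$, which is exactly how the paper proceeds.
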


Up to our knowledge, Theorem \ref{Cor_LLT_LD_tau} is new even in the one-dimensional case $d=1$. 
In particular, taking  $a = -1$, $m = 1$ and $l = 0$ in Theorem \ref{Cor_LLT_LD_tau}, we get
\begin{align}
& \bb P \Big( \tau_u = \floor{\beta \log u} \Big)  
 \sim  \left( 1 - e^{- I'(\beta)}  \right)
    \frac{ \mathscr C_{\beta, 0} (u) }{\sqrt{\log u}} 
    u^{-I(\beta)},   \label{Asym-pointwise-tau-u} \\
& \bb P \Big(\tau_u^y = \floor{\beta \log u}  \Big)  
 \sim  r_s^*(y) \left( 1 - e^{- I'(\beta)}  \right)
    \frac{ \mathscr C_{\beta, l} (u) }{\sqrt{\log u}} 
    u^{-I(\beta)}.  \label{Asym-pointwise-tau-u-y}
\end{align}
In the one-dimensional case $d=1$, by applying a different method, 
the asymptotic \eqref{Asym-pointwise-tau-u} has been recently established by Buraczewski, Damek and Zienkiewicz \cite{BDZ18}. 
This result is even valid for the general case $Q \in \bb R$, 
but under the additional assumptions that $M_1$ has a density $f(t)$ satisfying $f(t) \leq c (1 + t)^{-D}$ for some $D> 1 + \alpha$
and that $\bb P (M_1 \in dt, Q_1 \in ds) \leq f(t) dt \, \eta(ds)$ for some probability measure $\eta$ on $\bb R$. 
Note moreover that Theorem 1.6 in \cite{BDZ18} covers the one-dimensional case also for 
$\beta> \rho$, see also Remark \ref{Rem-Upper-LD}. 

\subsection{Examples}\label{sect:examples}

\begin{example}[Asymptotic independence of financial time series]\label{exa:garch1}
	Consider GARCH(1,2) processes $X_t$ and $Y_t$ driven by {\it the same} i.i.d.\ process $Z_t$, i.e.,
	$$ X_t = \sigma_t Z_t, \qquad Y_t = \eta_t Z_t, \qquad t \ge 0$$
	for an i.i.d.\ sequence $Z_t$ with mean zero and unit variance, where the conditional variance processes $\sigma_t$ and $\eta_t$ satisfy the following recursions: $\sigma_{-1}=\sigma_{-2}=\eta_{-1}=\eta_{-2}=0$ and
	$$ \sigma_t^2 ~=~ a_0+ a_1 X_{t-1}^2 + b_1 \sigma_{t-1}^2 + b_2 \sigma_{t-2}^2, \qquad \eta_t^2 ~=~ c_0 + c_1 Y_{t-1}^2 + d_1 \eta_{t-1}^2 + d_2 \eta_{t-2}^2,$$
	for positive coefficients $a_i, b_i, c_i, d_i$, $i \in \{1,2\}$ satisfying $a_1+b_1+b_2<1$ as well as $c_1+d_1+d_2<1$.
	
	Upon introducing the nonnegative matrices and vectors
	$$ M_{t} ~:=~ \begin{pmatrix}
		b_1 + a_1 Z_{t-1}^2 & b_2 \\
		1 & 0 
	\end{pmatrix}, \quad Q_t ~:=~ \begin{pmatrix}
	a_0 \\ 0
\end{pmatrix}, \qquad \widetilde{M}_{t} ~:=~ \begin{pmatrix}
d_1 + c_1 Z_{t-1}^2 & d_2 \\
1 & 0 
\end{pmatrix}, \quad \widetilde{Q}_t ~:=~ \begin{pmatrix}
c_0 \\ 0
\end{pmatrix}$$
we have the multivariate recursions
	$$ \begin{pmatrix}
		\sigma_t^2 \\ \sigma_{t-1}^2 
	\end{pmatrix} ~=~ M_t \begin{pmatrix}
\sigma_{t-1}^2 \\ \sigma_{t-2}^2 
\end{pmatrix} + Q_t, \qquad \begin{pmatrix}
\eta_t^2 \\ \eta_{t-1}^2 
\end{pmatrix} ~=~ \widetilde{M}_t \begin{pmatrix}
\eta_{t-1}^2 \\ \eta_{t-2}^2 
\end{pmatrix} + \widetilde{Q}_t$$

The (law of the) random variable $\sigma^*$ and $\eta^*$, given by the corresponding perpetuity series
$$ \sigma^* := \langle e_1, Q_1 +\sum_{n=2}^\infty M_1 \cdots M_{n-1} Q_n \rangle, \qquad \eta^* := \langle e_1, \widetilde{Q}_1 +\sum_{n=2}^\infty \widetilde{M}_1 \cdots \widetilde{M}_{n-1} \widetilde{Q}_n \rangle, $$
is a stationary distribution for $\sigma_t$ and $\eta_t$, respectively. By an application of Kesten's result (see \cite{BDM02a}, \cite[Example 4.4.13]{BDM16} for details), there are constants $\alpha, \beta >0$ such that $\sigma^*$ and $\eta^*$ are regularly varying with indices $\alpha$ and $\beta$, respectively: It holds $\bb P(\sigma^*>u) \sim C_1 u^{-\alpha}$, $\bb P(\eta^*>u) \sim C_2 u^{-\beta}$, or, equivalently
$$ \lim_{u \to \infty} u \cdot\bb P (\sigma^* > u^{1/\alpha}) = C_1, \quad \lim_{u \to \infty} u \cdot \bb P(\eta^* > u^{1/\beta})  =  C_2. $$
Does this -- due to the dependence on {\it the same} random input $Z_t$ -- imply asymptotic dependence, namely, does it hold
$$ \lim_{u \to \infty} u \cdot \bb P\big( \sigma^*>u^{1/\alpha}, \ \eta^* > u^{1/\beta} \big) = C_3 $$
for a positive constant $C_3$?	
For the one-dimensional case (meaning a univariate recursion, which includes GARCH(1,1)), it was proved in \cite{MW2022} that in general, $C_3=0$, meaning that the processes are {\it asymptotically independent}. The proof in \cite{MW2022} is based on the one-dimensional version of Theorem \ref{Thm_LLN_Perpe}: one makes use of the fact that the first passage time $\tau_u$ pertaining to the sequence converging to $\sigma^*$  is well localized around $\rho_\sigma \log u$, and that one can decompose $\eta^*$ into the contributions up to time $\rho_\sigma \log u$, which are small due to the fact that - under general assumptions - $\rho_\sigma$ and $\rho_\eta$ are different; and the contributions after time $\rho_\sigma \log u$, which are basically independent. See the proof of Theorem 5.1 in \cite{MW2022} for details. In the higher-order setting described above, this leads to the comparison of first passage times $\tau_{u}^{e_1}$ and $\widetilde{\tau}_{u}^{e_1}$ for the perpetuity sequences converging to $\eta^*$ and $\sigma^*$, respectively.
Therefore, our results can be used to derive an analogous result about asymptotic independence for higher order GARCH models.

Note that, by an application of Breiman's lemma \cite{Bre65}, the observed processes $X^2$ and $Y^2$ are regulary varying as well with indices $\alpha$ and $\beta$, respectively. Further, by Lemmata 2.6 and 2.10 in \cite{BDM12} (applied to $X^{2\alpha}$, $Y^{2\beta}$), asymptotic independence of the  stationary distributions of the volatility processes implies asymptotic independence for the stationary distributions of the observed processes. 
We have considered here for notational simplicity the extremal setting where both processes rely on the same noise, instead of the more realistic setup of correlated noise sequences $Z_t^X$ and $Z_t^Y$. However, asymptotic independence in our setup implies asymptotic independence in the less dependent setting. To wit, if $Z_t^X$ and $Z_t^Y$ are Gaussian, we can decompose $Z_t^Y=cZ_t^X + Z'_t$, where $Z'_t$ is independent of $Z_t$ and $X$ and recall that independent processes are in particular asymptotically independent. 
\end{example}

\begin{example}[{Perpetuities, cf.\  \cite[Example 3.4]{CV13}}] The notion {\it perpetuity} comes from the life insurance context: To calculate the amount of money necessary to pay a {\it perpetual} pension, one has to sum the discounted obligations. Let $R_k$ denote the interest rate in year $k$, then $A_k=1/(1+R_k)$ gives the discount factor. Note that we have encountered negative interest rates, so $A_k>1$ is possible. If $B_n$ denotes the obligation in year $n$, then
	$$ P_n = \sum_{k=1}^n A_1 \cdots A_{k-1}B_k$$
	is the capital needed to pay the pension for $n$ years. Then $\tau_u$ is the number of years the pension can be paid when the initial capital is $u$ (time to ruin). So our results provide precise estimates for the possible duration of the payment.

The multivariate setting may become relevant if different components of the vectors $P_n$ and $B_n$ correspond to savings and obligations, respectively, in different currencies. Then the diagonal entries of $A$ would correspond to discount rates for different currencies (countries), while off-diagonal entries correspond to a shifting of capital, incorporating exchange rates.
\end{example}

\begin{example}[Multitype branching processes with immigration in random environment] Consider a $d$-type branching process with immigration in an i.i.d.\  random environment $\mathcal{E}$ as in \cite{Key87, Roi07}. Let  $Z(t)=(Z^1(t), \dots, Z^d(t)) \in \bb{N}^d$ be the offspring born to immigrants who arrived between time $0$ and $t-1$ inclusive, here $Z^i(t)$ is the offspring of type $i$, $1 \leq i \leq d$. Then the quenched expectation $Y(t)$ of $Z(t)$  satisfies
$$ Y(t)~=~\bb{E} \big[ Z(t) \, \big| \, \mathcal{E} \big] ~=~ \sum_{k=0}^{t-1} I(k)M(t-k+1) \cdots M(t)$$
where $I(k)$ is the vector of the expected number of immigrants at time $k$, given the environment, and $M(\ell)=M(\ell)^{i,j}$ with $M(\ell)^{i,j}$ being the quenched expected number of type $j$-offspring born to type $j$-parents at time $\ell$,	see \cite[Eq.s (5), (15)]{Key87}.
For each $t$, the random variable $Y(t)$ or rather $Y(t) - \bb E (Z(t))$ describes how the expected population size fluctuates due to the environment. 
Note that $Y(t)$ is considered as a row vector, $Y(t)^\top$ takes the form of a forward process $V_n^*$ as defined in \eqref{Def_Yn02}. Using  that $V_n$ and $V_n^*$ have the same marginal distribution, we see that
$$ \bb P(\tau_u^{e_i} \leq t) ~=~ \bb P(\langle e_i,V_t\rangle >u)~=~ \bb P(\langle e_i,V_t^*\rangle >u) ~=~ \bb P (Y^i(t) > u).$$
Hence, Theorem \ref{Thm_LD_Perpe_Petrov} provides precise large deviation estimates for the quenched expectation of $Z^i(t)$ for each type $1 \leq i \leq d$. 
\end{example}

\begin{example}[Growth models of Sigma-Pi-type]
	In a similar way as the previous example, the forward process (cf. \eqref{Def_Yn02})
	\begin{align*}
		V_n^* = Q_n + M_n Q_{n-1} + \cdots + (M_n \ldots M_2) Q_1,  \quad  n \geq 1. 
	\end{align*}
	can be used to describe e.g. the growth of the total biomass of a system; where the components of $Q_n$ describe the mass of different species of bacteria entering the system at time $n$, while the ($\bb R_\ge$-valued) components of $M_k$ describe growth factors from time $k \to k+1$; as well as (positive) interaction effects, see \cite{STST17} for a discussion, as well as references therein. Hence, our results give precise large deviation estimates for $\bb P(\scal{V_n^*}{e_j}>u)$, for any $1\le j  \le d$. 
\end{example}

\section{Proof of Theorem \ref{Thm_LLN_Perpe}}
 
The main goal of this section is to establish Theorem \ref{Thm_LLN_Perpe} 
on the conditioned weak laws of large numbers for $\tau_u$ and $\tau_u^y$. 
We start with the following inequality, which holds uniformly in $s$ and improves \cite[Corollary 4.6]{BDGM14} 
established for fixed $s$. 
 Such an improvement plays an important role in the sequel. 

\begin{lemma}\label{Lem_kappa}
Assume \ref{Condi_ExpMom} and \ref{Condi_AP}. 
Then, for any compact set $K \subset I_{\mu}^{\circ}$, 
there exists a constant $c = c_K < \infty$ such that for any $s \in K$ and $n \geq 1$, 
\begin{align}\label{Ine_Kappa}
\bb E (\| \Pi_n \|^s) \leq c \, \kappa(s)^n. 
\end{align}
\end{lemma}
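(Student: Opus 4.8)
The plan is to reduce the uniform bound \eqref{Ine_Kappa} to a submultiplicativity argument combined with a compactness/continuity argument for the eigenfunctions $r_s$ and eigenvalues $\kappa(s)$. First I would recall from the spectral theory of the transfer operators (see \cite{BDGM14}, Proposition 3.1) that for each $s \in I_\mu^\circ$ the operator $P_s$ has a strictly positive continuous eigenfunction $r_s$ on $\bb S_+^{d-1}$ with $P_s r_s = \kappa(s) r_s$, normalized say by $\nu_s(r_s)=1$. Iterating this identity gives $P_s^n r_s(x) = \kappa(s)^n r_s(x)$, i.e.
\begin{align*}
\int_{\Gamma_\mu} |g x|^s\, r_s(g\cdot x)\, \mu^{*n}(dg) = \kappa(s)^n r_s(x),
\end{align*}
so that $\bb E\big[ |\Pi_n x|^s r_s(\Pi_n \cdot x)\big] = \kappa(s)^n r_s(x)$ for every $x \in \bb S_+^{d-1}$. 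Then, using $\|\Pi_n\| = \sup_{x}|\Pi_n x|$ and the fact that $r_s$ is bounded above and below on the compact set $\bb S_+^{d-1}$, one gets $\bb E(\|\Pi_n\|^s) \le \big(\inf_x r_s(x)\big)^{-1} \sup_x r_s(x)\, \kappa(s)^n$ — this is essentially \cite[Corollary 4.6]{BDGM14} for fixed $s$. The point is to make the constant $c_s := \|r_s\|_\infty / \inf r_s$ uniform over $s$ in a compact set $K$.

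For the uniformity, the key step is to establish that $s \mapsto r_s$ depends continuously on $s$ (in the supremum norm on $C(\bb S_+^{d-1})$, or at least that $\|r_s\|_\infty$ and $\inf_{x} r_s(x)$ are continuous, hence bounded and bounded away from zero, on $K$) and that $s \mapsto \kappa(s)$ is continuous (which is already known, since $\Lambda = \log\kappa$ is analytic on $I_\mu^\circ$). Continuity of the eigendata follows from analytic perturbation theory for the family $P_s$ on a suitable Banach space (e.g. Hölder-continuous functions on $\bb S_+^{d-1}$): under \ref{Condi_AP} the operators $P_s$ have a spectral gap, $\kappa(s)$ is a simple isolated dominant eigenvalue, and the map $s \mapsto P_s$ is real-analytic in operator norm on $I_\mu^\circ$; hence the spectral projection and the eigenfunction vary analytically, in particular continuously. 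Alternatively, one can argue more elementarily using the explicit formula $r_s(x) = \int \langle x,y\rangle^s \nu^*_s(dy)$ from \eqref{Def_r_s}, together with weak continuity of $s \mapsto \nu^*_s$, but the perturbation-theoretic route is cleaner. Since $K \subset I_\mu^\circ$ is compact and $s\mapsto (\inf r_s, \|r_s\|_\infty)$ is continuous and strictly positive, we obtain $c_K := \sup_{s\in K} \|r_s\|_\infty / \inf_x r_s(x) < \infty$, which yields \eqref{Ine_Kappa}.

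I expect the main obstacle to be making the continuity of $s \mapsto r_s$ rigorous with the right quantitative control — in particular, checking that the Banach space on which $P_s$ acts (some space of $\gamma$-Hölder functions on $\bb S_+^{d-1}$) can be chosen uniformly for $s$ in $K$, that the operators are uniformly bounded there using the moment assumption $\bb E\|M_1\|^{\alpha+\eta}\iota(M_1)^{-\eta} < \infty$ of \ref{Condi_ExpMom} (the $\iota(M)^{-\eta}$ factor is what controls the Hölder seminorm of $x\mapsto |gx|^s$ and $x \mapsto g\cdot x$), and that the spectral gap is uniform on $K$. Once these uniform bounds are in place, analyticity of $s \mapsto P_s$ and standard perturbation theory give the continuity of the eigenprojection, and the lemma follows. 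A secondary point is that one should double-check the normalization of $r_s$ is chosen consistently (e.g. $\nu_s(r_s) = 1$) so that $\inf_x r_s(x)$ is genuinely bounded away from $0$ uniformly on $K$; this is where positivity from \ref{Condi_AP} (every element of $\Gamma_\mu$ allowable, some strictly positive matrix present) enters.
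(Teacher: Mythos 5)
Your proposal follows essentially the same route as the paper: the eigenfunction identity $\bb E\big[|\Pi_n x|^s\, r_s(\Pi_n\cdot x)\big]=\kappa(s)^n r_s(x)$ combined with two-sided bounds on $r_s$ that are uniform for $s$ in the compact set $K$, obtained from continuity of $s\mapsto r_s$ in supremum norm; the paper simply cites \cite[Proposition 4.13]{BDGM14} for that continuity, so the perturbation-theoretic machinery you anticipate as the main obstacle is not needed. One step of your write-up is not justified as stated, however: from the identity and $c\le r_s\le C$ you only obtain the pointwise bound $\bb E(|\Pi_n x|^s)\le (C/c)\,\kappa(s)^n$ for each \emph{fixed} $x$, and invoking $\|\Pi_n\|=\sup_x|\Pi_n x|$ does not let you pass this bound to $\bb E(\|\Pi_n\|^s)=\bb E(\sup_x|\Pi_n x|^s)$, since the supremum sits inside the expectation. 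The missing ingredient is an almost sure comparison $\|\Pi_n\|\le c^{-1}|\Pi_n x_0|$ for one fixed interior direction $x_0$, which is exactly what the paper takes from \cite[Lemma 4.5]{BDGM14}; for this particular norm and nonnegative matrices it is elementary (with $x_0=(1/d,\dots,1/d)$ one has $|Mx_0|\ge \|M\|/d$), and it only costs a factor such as $d^{s}$, bounded uniformly on $K$. With that comparison inserted, your argument coincides with the paper's proof.
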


\begin{proof}
Since $P_s r_s = \kappa(s) r_s$, we get $P_s^n r_s = \kappa(s)^n r_s$ and hence
\begin{align}\label{Equation-Ps-rs}
\bb E \Big[ | G_n  x|^s r_s(G_n \!\cdot\! x) \Big] = \kappa(s)^n r_s(x), 
\end{align}
where $G_n = M_n \ldots M_1$. 
By \cite[Proposition 4.13]{BDGM14}, the map $s \mapsto r_s$ is continuous 
on $I_{\mu}^{\circ}$ with respect to the supremum norm on $\bb S_+^{d-1}$,  
hence it is uniformly continuous on the compact set $K \subset  I_{\mu}^{\circ}$.
This, together with the strict positivity of $r_s$, implies that 
there exist constants $c, C >0$ such that $c < r_s(x) < C$ for any $x \in \bb S_+^{d-1}$ and $s \in K$. 
Therefore, by \eqref{Equation-Ps-rs}, there exists a constant $c>0$ such that for any $x \in \bb S_+^{d-1}$ and $s \in K$, 
\begin{align*}
\kappa(s)^n \geq c \bb E \big( | G_n x|^s \big). 
\end{align*}
By \cite[Lemma 4.5]{BDGM14}, for any $x$ belonging to the interior of  $\bb S_+^{d-1}$, there exists a constant $c>0$ such that 
$\inf_{M \in \Gamma_{\mu}} \frac{|M x|}{\|M \|} >c$, so that $\bb E ( | G_n x|^s ) \geq c \bb E ( \| G_n \|^s ) = c \bb E ( \| \Pi_n \|^s )$
and the desired result follows. 
\end{proof}

The following result gives useful estimates for the tail distribution of the perpetuity sequence $V_n$,
which will be applied frequently in the sequel.  
Recall that $\kappa$ is defined by \eqref{def-kappa}, $\Lambda(s) = \log \kappa(s)$ and $\kappa_Q(s) = \bb E (|Q_1|^s)$. 

\begin{lemma}\label{Lem_Yn_u}
Let $s_0 \geq \alpha$ with $\alpha$ defined by \eqref{Def_alpha}. 
Assume \ref{Condi_ExpMom}, \ref{Condi_AP} 
and that there exists $\ee_0 \in (0,1)$ such that $\kappa(s_0 + t) < \infty$ and $\kappa_Q(s_0 + t) < \infty$ for all $|t| \leq \ee_0$. 
Then there exist constants $\ee > 0$ and $c >0$ such that 
for any $s \in [s_0, s_0 + \ee)$, $u >0$, $n \geq 1$ and $y \in \bb S_+^{d-1}$, 
\begin{align}\label{Bound_y_Vn_ttt02}
\bb P ( \langle y, V_n \rangle  > u) \leq 
\bb P (|V_n| > u) \leq c \,  n^{1+ s + \ee} 
\exp \left\{ n \left[ \ee \Lambda'(s) + \ee^2 \Lambda''(s)  \right] \right\}  \kappa(s)^n u^{-(s + \ee)}. 
\end{align}
Here, the constant $c$ does not depend on $\ee$. 
\end{lemma}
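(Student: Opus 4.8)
The plan is to bound $\bb P(|V_n| > u)$ via Markov's inequality at exponent $s + \ee$ and then control $\bb E(|V_n|^{s+\ee})$ using the triangle inequality together with submultiplicativity of the matrix norm. Concretely, since all entries of the $M_i$ and $Q_i$ are nonnegative, the vector norm $|\cdot|$ is monotone and subadditive, so from \eqref{Def_Yn01} we get $|V_n| \le \sum_{k=1}^n \|\Pi_{k-1}\| \, |Q_k|$ with the convention $\|\Pi_0\| = 1$. Applying $\| \cdot \|_{L^{s+\ee}}$ (Minkowski if $s + \ee \ge 1$, or the elementary $c_r$-inequality if $s + \ee < 1$, which only costs a further constant and a power of $n$) and independence of $(M_i, Q_i)_{i < k}$ from $(M_k, Q_k)$, one obtains
\begin{align*}
\bb E\big( |V_n|^{s+\ee} \big)^{1/(s+\ee)} \le \sum_{k=1}^n \bb E\big( \|\Pi_{k-1}\|^{s+\ee} \big)^{1/(s+\ee)} \bb E\big( |Q_k|^{s+\ee}\big)^{1/(s+\ee)}.
\end{align*}
First I would fix the compact set $K = [s_0, s_0 + \ee_0/2]$ inside $I_\mu^\circ$ (shrinking $\ee$ if necessary so that $s + \ee \in K$ for all $s \in [s_0, s_0 + \ee)$), and invoke Lemma~\ref{Lem_kappa} to get $\bb E(\|\Pi_{k-1}\|^{s+\ee}) \le c_K \, \kappa(s+\ee)^{k-1}$ uniformly, and the hypothesis $\kappa_Q(s_0 + t) < \infty$ for $|t| \le \ee_0$ to bound $\bb E(|Q_k|^{s+\ee}) \le C_Q$ uniformly in $k$ and in $s \in [s_0, s_0+\ee)$.

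Next I would sum the geometric-type series: since $\kappa$ is continuous and positive on $K$, $\max_{t \in K} \kappa(t) =: \bar\kappa < \infty$, so $\sum_{k=1}^n \kappa(s+\ee)^{(k-1)/(s+\ee)}$ is at most $n$ times the largest term, giving $\bb E(|V_n|^{s+\ee}) \le c \, n^{s+\ee+1} \kappa(s+\ee)^{n}$ after raising to the power $s+\ee$ (the extra $n^{1}$ absorbs the crude bound on the sum, and one more power of $n$ may be spent going between Minkowski and the $c_r$-inequality regime — this is where the $n^{1+s+\ee}$ exponent in \eqref{Bound_y_Vn_ttt02} comes from). Then Markov's inequality yields $\bb P(|V_n| > u) \le u^{-(s+\ee)} \bb E(|V_n|^{s+\ee}) \le c\, n^{1+s+\ee} \kappa(s+\ee)^n u^{-(s+\ee)}$. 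The first inequality $\bb P(\langle y, V_n\rangle > u) \le \bb P(|V_n| > u)$ is immediate since $\langle y, v \rangle \le |y|_* |v| \le |v|$ for $y \in \bb S_+^{d-1}$ and $v \in \bb R_+^d$ (as $|y| = 1$ and all entries nonnegative).

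The remaining step, and the only one requiring a small computation, is to replace $\kappa(s+\ee)^n$ by $\exp\{n[\ee \Lambda'(s) + \ee^2 \Lambda''(s)]\} \kappa(s)^n$. This is a second-order Taylor expansion of $\Lambda = \log\kappa$ at $s$: since $\Lambda$ is analytic on $I_\mu^\circ$ and $K$ is compact, there is $C_K$ with $|\Lambda'''| \le C_K$ on $K$, so $\Lambda(s+\ee) \le \Lambda(s) + \ee \Lambda'(s) + \tfrac{\ee^2}{2}\Lambda''(s) + C_K \ee^3$; choosing $\ee$ small enough that $C_K \ee^3 \le \tfrac{\ee^2}{2}\Lambda''(s)$ — possible because $\Lambda'' > 0$ on $I_\mu^\circ$ by strict convexity (cf.\ the non-degeneracy noted after Condition~\ref{Condi_NonArith}) and $\Lambda''$ is bounded below on $K$ — gives $\kappa(s+\ee)^n \le e^{n[\ee\Lambda'(s) + \ee^2\Lambda''(s)]}\kappa(s)^n$. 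The claim that the leading constant $c$ does not depend on $\ee$ follows because, after fixing $\ee$ once and for all as above, $c$ is built only from $c_K$, $C_Q$ and $\bar\kappa$, all of which depend on $K = [s_0, s_0+\ee_0/2]$ and the fixed data, not on the running $s$. I expect the main (minor) obstacle to be bookkeeping the two regimes $s+\ee \gtrless 1$ in the moment inequality so as to land exactly on the stated power $n^{1+s+\ee}$; everything else is a routine assembly of Lemma~\ref{Lem_kappa}, Markov, and a Taylor estimate.
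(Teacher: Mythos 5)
Your proposal is correct and follows essentially the same route as the paper: bound the $(s+\ee)$-moments termwise using the independence of $\Pi_{j-1}$ and $Q_j$ together with Lemma \ref{Lem_kappa}, sum the resulting geometric-type series, apply Markov's inequality, and convert $\kappa(s+\ee)^n$ into $\exp\{n[\ee\Lambda'(s)+\ee^2\Lambda''(s)]\}\kappa(s)^n$ by a second-order Taylor bound on $\Lambda$, with all constants uniform in $s$ and $\ee$. The only cosmetic difference is that the paper first uses the union bound $\bb P(|V_n|>u)\le\sum_{j=1}^n\bb P(|\Pi_{j-1}Q_j|>u/n)$ and applies Markov to each term (which is exactly where the factor $n^{1+s+\ee}$ comes from), whereas you bound $\bb E(|V_n|^{s+\ee})$ directly via Minkowski or the $c_r$-inequality and apply Markov once; both land on the stated estimate.
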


\begin{proof}
The proof follows from that of \cite[Lemma 3.1]{BCDZ16}. 
Since $\langle y, V_n \rangle \leq |V_n|$, we only need to prove the second inequality in \eqref{Bound_y_Vn_ttt02}. 
As in (3.4) of \cite{BCDZ16},
by the independence of $\Pi_{j-1}$ and $Q_j$, and Lemma \ref{Lem_kappa}, 
there exists a constants $c > 0$ such that for any $s \in [s_0, s_0 + \ee)$, 
\begin{align}\label{Pf-Vn-aa}
\bb P (|V_n| > u) 
\leq \sum_{j=1}^n \bb P \left( |\Pi_{j-1} Q_j| > \frac{u}{n} \right)   
  \leq  \left( \frac{u}{n} \right)^{ - (s+\ee) }  \sum_{j=1}^n \bb E  \left( |\Pi_{j-1} Q_j|^{s + \ee} \right)
  \leq  c n^{s + \ee} u^{- (s+ \ee)}  \sum_{j=1}^n  e^{ j \Lambda(s + \ee) }. 
\end{align}
Following again the proof of \cite[Lemma 3.1]{BCDZ16}, 
one has $\Lambda(s + \ee) \leq \Lambda(s) + \Lambda'(s) \ee +  \Lambda''(s) \ee^2$
when $\ee$ is sufficiently small. 
Since $s \geq \alpha$, we have $\Lambda(s) \geq 0$, $\Lambda'(s) >0$ and $\Lambda''(s) >0$. Hence, uniformly in $s \in [s_0, s_0 + \ee)$, 
\begin{align*}
\sum_{j=1}^n  e^{ j \Lambda(s + \ee) }
\leq c \,  n  \exp \left\{ n \big[ \Lambda(s) + \Lambda'(s) \ee +  \Lambda''(s) \ee^2 \big]  \right\}
= c \, n  \,  \kappa(s)^n  \exp \left\{ n \big[ \Lambda'(s) \ee +  \Lambda''(s) \ee^2 \big]  \right\}. 
\end{align*}
Substituting this into \eqref{Pf-Vn-aa} concludes the proof of \eqref{Bound_y_Vn_ttt02}. 
\end{proof}

Instead of the inequality, if we use the equality
 $\Lambda(s + \ee) = \Lambda(s) + \Lambda'(s) \ee + \frac{1}{2} \Lambda''(s_1) \ee^2$
for some $s_1 \in (s, s + \ee)$, 
then we get the following result which will be useful to establish Theorem \ref{Thm_LD_Perpe_Petrov}. 

\begin{remark}\label{Rem-Inequality-Vn}
Let $s_0 \geq \alpha$ with $\alpha$ defined by \eqref{Def_alpha}. 
Assume \ref{Condi_ExpMom}, \ref{Condi_AP} 
and that there exists $t > 0$ such that $\kappa(s_0 + t') < \infty$ and $\kappa_Q(s_0 + t') < \infty$ for all $|t'| \leq 2t$. 
Then there exists a constant $c >0$ such that 
for any $s \in [s_0, s_0 + t)$, $u >0$, $n \geq 1$ and $y \in \bb S_+^{d-1}$, the following holds. 
There exists $s_1 \in (s, s + t)$ such that 
\begin{align}\label{reamrk-Vn-u01}
\bb P ( \langle y, V_n \rangle  > u) \leq 
\bb P (|V_n| > u) \leq c \,  n^{1+ s + t} 
\exp \left\{ n \left[ t \Lambda'(s) + \frac{1}{2} t^2 \Lambda''(s_1)  \right] \right\}  \kappa(s)^n u^{-(s + t)}.  
\end{align}
Here the constant $c$ does not depend on $t$. 
\end{remark}

The following result gives an asymptotic expansion of the rate function $I(\beta - l)$ 
with respect to $l$ in a small neighborhood of $0$. 
Denote $\gamma_{s,k} = \Lambda^{(k)} (s)$ and let $\xi_s$ be the Cram\'er series (cf.\ \cite{Pet75, XGL19b}) of $\Lambda(s)$ given by
\begin{align}\label{def-xi-s-t}
\xi_s (t) = \frac{\gamma_{s,3} }{ 6 \gamma_{s,2}^{3/2} }  
  +  \frac{ \gamma_{s,4} \gamma_{s,2} - 3 \gamma_{s,3}^2 }{ 24 \gamma_{s,2}^3 } t
  +  \frac{\gamma_{s,5} \gamma_{s,2}^2 - 10 \gamma_{s,4} \gamma_{s,3} \gamma_{s,2} + 15 \gamma_{s,3}^3 }{ 120 \gamma_{s,2}^{9/2} } t^2
  +  \ldots.
\end{align}

\begin{lemma}\label{Lem_Expan_Rate}
Assume \ref{Condi_ExpMom}, \ref{Condi_AP} and that there exists $s \in I_{\mu}^{\circ}$ 
such that $\beta = \frac{1}{\Lambda'(s)} \in (0, \rho)$ with $\rho$ defined by \eqref{Def_rho}. 
Then there exists a constant $\delta>0$ such that for any $|l| \leq \delta$,
\begin{align*}
I(\beta - l) = I(\beta) + \frac{l}{\beta - l} I(\beta) + \frac{sl}{\beta (\beta - l)^2} + h_s(l), 
\end{align*}
where the function $h_s$ is linked to the Cram\'{e}r series $\xi_s$ by
\begin{align*}
h_s(l) = \frac{l^2}{ 2 \sigma_s^2 \beta^2 (\beta-l)^3 } 
  - \frac{l^3}{\sigma_s^3 \beta^3 (\beta - l)^4 } \xi_s \left( \frac{l}{ \sigma_s \beta (\beta-l) } \right).
\end{align*}
\end{lemma}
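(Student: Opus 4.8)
The plan is to derive the expansion of $I(\beta-l)$ directly from the identity $I(\beta-l) = s_l - \Lambda(s_l)/\Lambda'(s_l)$, where $s_l$ is defined implicitly by $\Lambda'(s_l) = 1/(\beta-l)$. First I would set up the relationship between the perturbed parameter $s_l$ and the perturbation $l$. Since $\Lambda$ is analytic on $I_\mu^\circ$ with $\Lambda''(s)=\sigma_s^2>0$, the map $s\mapsto\Lambda'(s)$ is a local analytic diffeomorphism near $s$, so $s_l$ depends analytically on $l$ for $|l|$ small enough (this fixes the constant $\delta>0$). From $\Lambda'(s_l) = \frac{1}{\beta-l} = \frac{1}{\beta}\cdot\frac{1}{1-l/\beta}$ and $\Lambda'(s) = 1/\beta$, I would expand: writing $t_l := s_l - s$, the equation becomes $\Lambda'(s+t_l) - \Lambda'(s) = \frac{l}{\beta(\beta-l)}$. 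Inverting the analytic function $t\mapsto \Lambda'(s+t)-\Lambda'(s)$ — whose derivative at $0$ is $\gamma_{s,2}=\sigma_s^2$ — is exactly what produces the Cramér series $\xi_s$; this is the standard Lagrange-inversion computation underlying the Bahadur–Rao/Petrov expansions, and it gives $t_l = \frac{1}{\sigma_s^2}\cdot\frac{l}{\beta(\beta-l)} + (\text{higher order})$, with all higher-order terms encoded by $\xi_s$ evaluated at the argument $\frac{l}{\sigma_s\beta(\beta-l)}$.

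Next I would substitute into $I(\beta-l) = s_l - (\beta-l)\Lambda(s_l)$, using $\Lambda'(s_l)=1/(\beta-l)$. The cleanest route is to differentiate $I(\beta-l)$ as a function of $l$ and integrate, or equivalently to Taylor-expand $g(l):= s_l - (\beta-l)\Lambda(s_l)$ around $l=0$. Using the chain rule and $\frac{d}{dl}\Lambda(s_l) = \Lambda'(s_l) s_l' = \frac{s_l'}{\beta-l}$, one finds that the $s_l'$ contributions telescope and
\[
\frac{d}{dl} I(\beta-l) = \Lambda(s_l),
\]
so $I(\beta-l) = I(\beta) + \int_0^l \Lambda(s_r)\,dr$. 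Then I would plug the expansion of $s_r$ (hence of $\Lambda(s_r) = \Lambda(s) + \Lambda'(s)t_r + \tfrac12\Lambda''(s)t_r^2 + \cdots$) into this integral and collect terms. Using $\Lambda(s) = s\Lambda'(s) - I(\beta)\Lambda'(s) = \frac{s-I(\beta)}{\beta}$ (from \eqref{Def_I_beta_s}) to handle the constant term of $\Lambda(s_r)$, the leading integral $\int_0^l \frac{s-I(\beta)}{\beta}\,\frac{\beta\,dr}{\beta-r}\cdot(\text{correction})$ — more precisely, tracking that $\Lambda(s_r)$ has leading behaviour governed by $\frac{1}{\beta-r}$ factors — should reproduce the first three stated terms $\frac{l}{\beta-l}I(\beta) + \frac{sl}{\beta(\beta-l)^2}$ exactly, and the remaining quadratic-and-higher contributions assemble into $h_s(l) = \frac{l^2}{2\sigma_s^2\beta^2(\beta-l)^3} - \frac{l^3}{\sigma_s^3\beta^3(\beta-l)^4}\xi_s\!\left(\frac{l}{\sigma_s\beta(\beta-l)}\right)$.

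The main obstacle I anticipate is purely bookkeeping: matching the closed-form prefactors $(\beta-l)^{-k}$ that appear in the statement, rather than a plain power series in $l$. The point is that the natural small parameter is not $l$ itself but $\frac{l}{\beta-l}$ (equivalently, the increment $\Lambda'(s_l)-\Lambda'(s) = \frac{l}{\beta(\beta-l)}$), and one must be disciplined about re-expressing everything in terms of $\beta-l$ at each stage so that the Cramér series enters with precisely the argument $\frac{l}{\sigma_s\beta(\beta-l)}$. I would therefore carry out the computation by first proving $\frac{d}{dl}I(\beta-l) = \Lambda(s_l)$ cleanly, then substituting the Lagrange-inverted expression for $s_l$ and integrating term by term, checking the first two orders by hand against \eqref{Def_I_beta_s} and its derivative $I'(\beta) = -\frac{1}{\beta}I(\beta) - \frac{s}{\beta^3}$, and invoking the definition \eqref{def-xi-s-t} of $\xi_s$ to package the tail of the series. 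Analyticity of $\Lambda$ on $I_\mu^\circ$ guarantees convergence of all these series for $|l|$ below some $\delta$, which is all that is claimed.
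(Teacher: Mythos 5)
Your two intermediate identities are correct ($I(\beta-l)=s_l-(\beta-l)\Lambda(s_l)$ with $\Lambda'(s_l)=\tfrac{1}{\beta-l}$, and the telescoping $\tfrac{d}{dl}I(\beta-l)=\Lambda(s_l)$), but the proof stops exactly where the work begins: the assembly of $\int_0^l\Lambda(s_r)\,dr$ into the specific closed form with prefactors $(\beta-l)^{-k}$ and argument $\tfrac{l}{\sigma_s\beta(\beta-l)}$ is only asserted as ``bookkeeping'', and if you carry it out it does not land on the displayed formula. Indeed, by your own derivative identity, $\tfrac{d}{dl}I(\beta-l)\big|_{l=0}=\Lambda(s)=\tfrac{s-I(\beta)}{\beta}$ (using \eqref{Def_I_beta_s}), whereas the right-hand side of the lemma has $l$-derivative $\tfrac{I(\beta)}{\beta}+\tfrac{s}{\beta^3}$ at $l=0$; these disagree in general (test $\Lambda(t)=t^2/2$, $s=1/\beta$, $I(\beta)=\tfrac1{2\beta}$, $\xi_s\equiv 0$). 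So the consistency check you propose against $I'(\beta)=-\tfrac1\beta I(\beta)-\tfrac{s}{\beta^3}$ cannot close. Carried out faithfully, your route (equivalently: $(\beta-l)\Lambda^{*}(\tfrac{1}{\beta-l})$ expanded via \eqref{Def Jsl} at $l_1=\tfrac{l}{\beta(\beta-l)}$) yields $I(\beta-l)=I(\beta)-\tfrac{l}{\beta}I(\beta)+\tfrac{sl}{\beta}+\tfrac{l^2}{2\sigma_s^2\beta^2(\beta-l)}-\tfrac{l^3}{\sigma_s^3\beta^3(\beta-l)^2}\,\xi_s\big(\tfrac{l}{\sigma_s\beta(\beta-l)}\big)$, a differently shaped expansion. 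The mismatch is a normalization issue: the displayed formula is what comes out of writing $I(\beta-l)$ as $\tfrac{1}{\beta-l}\Lambda^{*}(\tfrac{1}{\beta-l})$, which is how the paper's proof starts, while your starting identity is anchored to the normalization $I(\beta)=\beta\Lambda^{*}(1/\beta)$ of \eqref{Def_I_beta_s}; your argument as written proves neither version, because the decisive computation is deferred.

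For comparison, the paper's proof is a two-line substitution: write $\tfrac{1}{\beta-l}=\tfrac{1}{\beta}+\tfrac{l}{\beta(\beta-l)}$, plug $l_1=\tfrac{l}{\beta(\beta-l)}$ into the ready-made expansion \eqref{Def Jsl} of $\Lambda^{*}$ around $\Lambda'(s)=\tfrac1\beta$ (quoted from \cite{XGL19a}), and multiply by the outer factor. The Lagrange-inversion computation you sketch is precisely the proof of \eqref{Def Jsl}; saying that ``all higher-order terms are encoded by $\xi_s$'' is assuming that result (and is slightly misstated: $\xi_s$ packages the expansion of $\Lambda^{*}$, not of $s_l-s$). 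So nothing is gained by re-deriving it, and the detour through $\tfrac{d}{dl}I(\beta-l)=\Lambda(s_l)$ is what exposes the first-order mismatch above. To make your write-up into a proof you must fix one normalization of $I$, state the closed form it actually yields, and either perform the Cram\'er-series expansion explicitly or cite \eqref{Def Jsl} and substitute as the paper does; as it stands there is a genuine gap.
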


\begin{proof}
By the definition of $I$, we write
\begin{align*}
I(\beta - l) = \frac{1}{\beta - l} \Lambda^* \left( \frac{1}{\beta - l} \right) 
 = \frac{1}{\beta - l} \Lambda^* \left( \frac{1}{\beta} + \frac{l}{ \beta(\beta - l) }  \right).
\end{align*}
It was shown in \cite{XGL19a} that for any $|l_1| \leq \delta$, 
\begin{align*} 
\Lambda^* \left( \frac{1}{\beta} + l_1  \right)
   = \Lambda^{*} \left( \frac{1}{\beta} \right) + sl_1 
      + \frac{l_1^2}{2 \sigma_s^2} - \frac{l_1^3}{\sigma_s^3} \xi_s \left( \frac{l_1}{\sigma_s} \right).  
\end{align*}
Taking $l_1 = \frac{l}{ \beta(\beta - l) }$, we get
\begin{align*}
 \frac{1}{\beta - l} \Lambda^* \left( \frac{1}{\beta} + \frac{l}{ \beta(\beta - l) }  \right)  
& =  \frac{1}{\beta - l} \left[ \Lambda^{*} \left( \frac{1}{\beta} \right) + \frac{sl}{ \beta(\beta - l) } 
    +  \frac{l^2}{ 2 \sigma_s^2 \beta^2 (\beta - l)^2 } 
    - \frac{l^3}{\sigma_s^3 \beta^3 (\beta - l)^3 } \xi_s \left( \frac{l}{ \sigma_s \beta (\beta - l) } \right)   \right]   \notag\\
& = \frac{\beta}{\beta - l} I(\beta) + \frac{sl}{\beta (\beta - l)^2} + h_s(l), 
\end{align*}
as desired. 
\end{proof}

From Lemma \ref{Lem_Yn_u} we deduce the following asymptotics for the first passage times $\tau_u$ and $\tau_u^y$,
which will also be applied to establish Theorem \ref{Thm_LD_Perpe_Petrov} on the large deviation results for $\tau_u$ and $\tau_u^y$. 

\begin{lemma}\label{Lem_tau_Low}
Let $\beta \in (0, \rho)$ with $\rho$ defined by \eqref{Def_rho}. 
Assume \ref{Condi_ExpMom}, \ref{Condi_AP} and 
that there exists $\ee > 0$ such that $\kappa(s_0 + t) < \infty$ and $\kappa_Q(s_0 + t) < \infty$ for all $|t| \leq \ee$. 
Then, for any $a > \frac{3 + 2s}{2\Lambda(s)}$ and any sequence $(l_u)$ of positive numbers satisfying $\lim_{u \to \infty} l_u = 0$,  
we have, as $u \to \infty$,
uniformly in $l \in [0, l_u]$ and $y \in \bb S_+^{d-1}$, 
\begin{align}\label{Bound_tau_y_jj02}
\bb P \Big( \tau_u \leq  \upzeta_u  \Big) 
= \bb P ( |V_{\upzeta_u}| > u ) 
= o \left( \frac{u^{-I(\beta - l)}}{ \sqrt{ \log u } } \right), 
\qquad 
\bb P \Big( \tau_u^y \leq  \upzeta_u  \Big) 
= \bb P ( \langle y, V_{\upzeta_u} \rangle > u ) 
= o \left( \frac{u^{-I(\beta - l)}}{ \sqrt{ \log u } } \right), 
\end{align}
where $\upzeta_u = \floor{ (\beta - l) \log u - a \log \log u }$.
\end{lemma}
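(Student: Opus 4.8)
The plan is to estimate $\bb P(|V_{\upzeta_u}| > u)$ directly via the uniform tail bound of Lemma \ref{Lem_Yn_u}, optimizing the choice of the exponent. Since $\langle y, V_{\upzeta_u}\rangle \le |V_{\upzeta_u}|$ and since $\bb P(\tau_u \le n) = \bb P(|V_n| > u)$ by \eqref{eq:forward process exceedances}, it suffices to prove the second equality in \eqref{Bound_tau_y_jj02} with $\langle y, V_{\upzeta_u}\rangle$ replaced by $|V_{\upzeta_u}|$, which then yields both statements uniformly in $y$. Write $n = \upzeta_u = \floor{(\beta-l)\log u - a\log\log u}$. The starting point is that $\beta = 1/\Lambda'(s)$ with $s > \alpha$ (this forces $\Lambda(s) > 0$, $\Lambda'(s) > 0$), and that $I(\beta) = s - \Lambda(s)/\Lambda'(s)$ by \eqref{Def_I_beta_s}; more generally $I(\beta - l)$ admits the expansion from Lemma \ref{Lem_Expan_Rate}.

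The key computation is the following. First I would apply Lemma \ref{Lem_Yn_u} at the base exponent $s$ (shrinking $\ee$ if necessary so that the conclusion of Lemma \ref{Lem_Yn_u} holds on $[s, s+\ee)$), but with the free increment set to a small fixed $\ee' \in (0,\ee)$, giving
\begin{align*}
\bb P(|V_n| > u) \le c\, n^{1+s+\ee'} \exp\left\{ n\left[ \ee' \Lambda'(s) + \ee'^2 \Lambda''(s) \right] \right\} \kappa(s)^n u^{-(s+\ee')}.
\end{align*}
Now substitute $n = (\beta-l)\log u - a\log\log u + O(1)$ and $\kappa(s)^n = e^{n\Lambda(s)}$. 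The dominant term in the exponent is $n\Lambda(s) - (s+\ee')\log u$. Using $n = (\beta - l)\log u - a\log\log u + O(1)$ and $\beta - l = \frac{1}{\Lambda'(s)} - l$ one finds, after collecting the $\log u$ coefficients,
\begin{align*}
n\Lambda(s) - (s+\ee')\log u = -\left[ (s+\ee') - (\beta - l)\Lambda(s) \right]\log u - a\Lambda(s)\log\log u + O(1).
\end{align*}
The bracket equals $I(\beta - l) + \ee' - l\Lambda(s)$ since $(\beta)\Lambda(s) = \Lambda(s)/\Lambda'(s) = s - I(\beta)$ and since $I(\beta - l) - I(\beta) = O(l)$ by Lemma \ref{Lem_Expan_Rate}; more precisely one checks directly that $(s) - (\beta - l)\Lambda(s) = I(\beta - l) + \Lambda(s)\cdot\big((\beta - I(\beta)/\Lambda'(s))\text{-correction}\big)$, but the clean way is: $I(\beta - l) = \frac{1}{\beta-l}\Lambda^*\!\big(\frac{1}{\beta-l}\big)$ and $(\beta - l)\Lambda^*\!\big(\tfrac{1}{\beta-l}\big) = s - (\beta-l)\Lambda(s)$ would require $\Lambda'(s) = \frac{1}{\beta-l}$, which is false; so instead I use that $\ee'$ and $l$ are small and treat $\ee' \Lambda'(s) \cdot n$ and the $l$-dependence together. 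The upshot I expect: for $\ee'$ fixed small enough, the exponent of $u$ is $-I(\beta-l) - \ee'/2$ say (using $\Lambda''(s) \ee'^2 n$ is lower order and $l \to 0$ uniformly absorbs the $l$-discrepancy into $\ee'/2$ once $u$ is large), so
\begin{align*}
\bb P(|V_n| > u) \le c\, n^{1+s+\ee'} (\log u)^{-a\Lambda(s)} u^{-I(\beta-l) - \ee'/2}.
\end{align*}
Since $n \le \beta\log u$, the polynomial factor $n^{1+s+\ee'}$ is $O\big((\log u)^{1+s+\ee'}\big)$, and $u^{-\ee'/2}$ beats any power of $\log u$, so the whole bound is $o\big(u^{-I(\beta-l)}/\sqrt{\log u}\big)$ uniformly in $l \in [0,l_u]$ and $y$.

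Actually, to avoid relying on the $l\to0$ absorption being airtight, the cleaner route — and the one I would write — is to not introduce an extra $\ee'$ at all but instead exploit the $-a\log\log u$ shift directly: apply Lemma \ref{Lem_Yn_u} with the increment $\ee$ replaced by a quantity tending to $0$, or simply bound using the exponent $s + \ee$ with $\ee$ small and track that the $-a\log\log u$ term in $n$ produces the factor $(\log u)^{-a\Lambda(s)}$ with $a\Lambda(s) > (3+2s)/2 \ge 1 + s + 1/2$, which dominates the polynomial factor $n^{1+s+\ee} \le c(\log u)^{1+s+\ee}$ and the missing $1/\sqrt{\log u}$. That is precisely why the hypothesis $a > \frac{3+2s}{2\Lambda(s)}$ appears: it ensures $(\log u)^{-a\Lambda(s)} \cdot (\log u)^{1+s+\ee} \cdot \sqrt{\log u} \to 0$ for $\ee$ small. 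So the steps are: (i) reduce to bounding $\bb P(|V_{\upzeta_u}| > u)$; (ii) apply Lemma \ref{Lem_Yn_u} with a small fixed $\ee$; (iii) substitute $n = (\beta-l)\log u - a\log\log u + O(1)$ and simplify the exponent of $u$ to $-I(\beta-l) + o(1)$ uniformly in $l$ (here using Lemma \ref{Lem_Expan_Rate} / \eqref{Def_I_beta_s} and $l_u \to 0$ to handle the $l$-dependence), while the exponent of $\log\log u$ contributes $(\log u)^{-a\Lambda(s)}$; (iv) check the powers of $\log u$ cancel favorably using $a > \frac{3+2s}{2\Lambda(s)}$.

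The main obstacle is step (iii): making the passage from the crude exponent $n\Lambda(s) - (s+\ee)\log u$ to $-I(\beta-l)\log u + (\text{lower order})$ genuinely uniform in $l\in[0,l_u]$, since $I(\beta-l)$ varies with $l$ and one must show the error terms (the $\ee\Lambda'(s)n$, the $\ee^2\Lambda''(s)n$, and the $O(1)$ from the floor) are all $o(\log u)$ in the exponent uniformly, i.e. do not swamp the gain $(\log u)^{-a\Lambda(s)}$. Concretely one wants: $n[\ee\Lambda'(s) + \ee^2\Lambda''(s)] - (s+\ee-(s))\log u = n\ee\Lambda'(s) - \ee\log u + O(\ee^2\log u) = \ee[(\beta-l)\Lambda'(s) - 1]\log u + O(\ee^2\log u) + O(\ee\log\log u)$, and since $(\beta-l)\Lambda'(s) = 1 - l\Lambda'(s) < 1$, the leading bracket is $\le -\ee l\Lambda'(s) \le 0$ — so in fact the $\ee$-terms only help, and no cancellation subtlety arises; $\ee$ can even be sent to $0$. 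This makes the argument clean once set up correctly, and I would present it by first recording the identity $(s) - (\beta-l)\Lambda(s) = I(\beta-l) + \Lambda(s)\big[\beta - l - (s-I(\beta-l))/\Lambda(s)\big]$— no: the honest identity, obtained from $I(\beta-l) = \beta' \Lambda^*(1/\beta')$ with $\beta' = \beta - l$ and $\Lambda^*(1/\beta') = s' /\beta' - \Lambda(s')$ where $\Lambda'(s') = 1/\beta'$, is $I(\beta-l) = s' - \Lambda(s')/\Lambda'(s')$ with $s' = s'(l) \to s$; one then writes $s' - (\beta-l)\Lambda(s')$ in place of $s - (\beta-l)\Lambda(s)$, i.e. applies Lemma \ref{Lem_Yn_u} at base exponent $s'(l)$ rather than $s$, obtaining exactly $u^{-I(\beta-l)}$ up to the benign $\ee$- and $\log\log u$-corrections, and uniformity follows from continuity of $s'(\cdot)$ and of $\Lambda, \Lambda'$ near $s$ together with $l_u \to 0$.
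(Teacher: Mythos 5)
Your final route is essentially the paper's proof: reduce to $\bb P(|V_{\upzeta_u}|>u)$ via $\langle y,V_{\upzeta_u}\rangle\le|V_{\upzeta_u}|$, apply Lemma \ref{Lem_Yn_u} at the $l$-shifted base exponent $s'(l)=s+l_s$ defined by $\Lambda'(s+l_s)=\frac{1}{\beta-l}$ (which yields exactly $u^{-I(\beta-l)}$ through \eqref{Def_I_beta_s}), and use the factor $\kappa(s+l_s)^{-a\log\log u}=(\log u)^{-a\Lambda(s+l_s)}$ together with $a>\frac{3+2s}{2\Lambda(s)}$ to beat the polynomial factor and the missing $\frac{1}{\sqrt{\log u}}$. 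The only detail to pin down (which your ``send $\ee$ to $0$'' remark glosses over, and which invalidates your discarded fixed-$\ee'$ first pass because $\ee'^2\Lambda''(s)\,\upzeta_u$ is of order $\log u$) is an explicit vanishing choice such as $\ee=\frac{\log\log u}{2\log u}$, exactly as in the paper, so that $\ee^2\upzeta_u\to0$ while $u^{-\ee}=\frac{1}{\sqrt{\log u}}$.
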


\begin{proof}
It suffices to prove the first asymptotic in \eqref{Bound_tau_y_jj02} since $\tau_u \leq \tau_u^y$ for any $y \in \bb S_+^{d-1}$. 
By the definition of $\tau_u$, it holds that 
$\bb P \big( \tau_u \leq  \upzeta_u  \big) =  \bb P ( |V_{\upzeta_u}| > u ).$
For any $\beta \in (0, \rho)$, there exists $s \in I_{\mu}^{\circ}$ such that $\beta = \frac{1}{\Lambda'(s)}$. 
Since the function $\Lambda'$ is increasing on $I_{\mu}^{\circ}$, 
for any vanishing perturbation $l \geq 0$, there exists $l_s \geq 0$ satisfying $l_s \to 0$ as $u \to \infty$
such that 
\begin{align}\label{beta-l-Lambda}
\beta - l = \frac{1}{ \Lambda'(s + l_s)}. 
\end{align}
Applying Lemma \ref{Lem_Yn_u} with $n = \upzeta_u$ and $s = s + l_s$,  
there exist constants $\ee > 0$ and $c>0$ (independent of $\ee$) such that 
uniformly in $l_s \to 0$, $u >0$ and $n \geq 1$, 
\begin{align}\label{Ine_Vupzeta_ll}
 \bb P ( |V_{\upzeta_u}| > u )    
  \leq  c \,  \upzeta_u^{1 + s + l_s + \ee}  \exp \left\{ \upzeta_u  \left[ \ee \Lambda'(s+l_s) + \ee^2 \Lambda''(s+l_s) \right] \right\}  
  \kappa(s+l_s)^{\upzeta_u} u^{-(s + l_s + \ee)}. 
\end{align}
Following the proof of (3.8) and (3.9) in \cite{BCDZ16}, one can use \eqref{Def_I_beta_s} and \eqref{beta-l-Lambda} to derive 
\begin{align}\label{Pf_Iden_Ibeta}
 \kappa(s + l_s)^{(\beta - l) \log u} u^{-(s + l_s)} 
 = u^{ -I(\beta -l) }, 
 \qquad 
 \kappa(s+l_s)^{\upzeta_u}  u^{-(s + l_s)} \leq u^{ -I(\beta - l) }  \kappa(s+l_s)^{- a \log \log u}. 
\end{align}
We choose $\ee = \ee (u) = \frac{\log \log u}{2 \log u}$, 
so that $u^{-\ee} = \frac{1}{\sqrt{\log u}}$. 
In view of \eqref{Ine_Vupzeta_ll} and \eqref{Pf_Iden_Ibeta}, to prove the first asymptotic in \eqref{Bound_tau_y_jj02},  
it suffices to show that, as $u \to \infty$, uniformly in $l_s \to 0$, 
\begin{align}\label{Pf_tau_o_1}
 \upzeta_u^{1 + s + l_s + \ee}  \kappa(s+l_s)^{- a \log \log u}  
   \exp \left\{ \upzeta_u \left[ \ee \Lambda'(s+l_s) + \ee^2 \Lambda''(s+l_s) \right] \right\} 
 = o(1). 
\end{align}
Since $\upzeta_u \leq (\beta - l) \log u = \frac{ \log u }{\Lambda'(s + l_s)}$ (by \eqref{beta-l-Lambda}) and $\Lambda'(s+l_s) > 0$, 
 as $u \to \infty$, uniformly in $l \to 0$ and $l_s \to 0$, 
 it holds that $\exp \left\{ \upzeta_u  \left[ \ee \Lambda'(s+l_s) + \ee^2 \Lambda''(s+l_s) \right] \right\} 
 \leq c \sqrt{\log u}.$ 
Since $\kappa(s+l_s)^{- a \log \log u} = (\log u)^{- a \Lambda(s + l_s)}$ 
and $\upzeta_u^{1 + s + l_s + \ee}  \leq c' (\log u)^{1 + s + l_s}$ by using $(\log u)^{\ee} \leq c$, 
we have $\upzeta_u^{1 + s + l_s + \ee}  \kappa(s+l_s)^{- a \log \log u} 
\leq c (\log u)^{1 + s + l_s - a \Lambda(s + l_s)}.$ 
Therefore, we obtain \eqref{Pf_tau_o_1} by taking $a > \frac{3 + 2(s+l_s)}{2\Lambda(s+l_s)}$
and using the fact that $\Lambda(s + l_s) > 0$. 
By Taylor's formula,  for any $\eta>0$, it holds uniformly in $l_s \geq 0$ with $l_s \to 0$ that
$(1 + \eta) \frac{3 + 2s}{2\Lambda(s)} > \frac{3 + 2(s+l_s)}{2\Lambda(s+l_s)}$. 
Since $\eta>0$ can be arbitrary small, we conclude the proof of \eqref{Bound_tau_y_jj02}
by taking $a > \frac{3 + 2s}{2\Lambda(s)}$. 
\end{proof}

As another application of Lemma \ref{Lem_Yn_u}, 
we obtain the following bounds for $\tau_u$ and $\tau_u^y$, 
which will also be used to establish Theorem \ref{Thm_CLT_Perpe} 
on the conditioned central limit theorems for $\tau_u$ and $\tau_u^y$.

\begin{lemma}\label{Lem_tau_Low_b}
Let $\alpha$ and $\rho$ be defined by \eqref{Def_alpha} and \eqref{Def_rho}, respectively. 
Assume \ref{Condi_ExpMom}, \ref{Condi_AP} and
  that there exists $\ee \in (0,1)$ such that $\kappa(\alpha + t) < \infty$ and $\kappa_Q(\alpha + t) < \infty$ for all $|t| \leq \ee$. 
Then, for any $\delta >0$, there exists a constant $c>0$ such that
for any $b > \rho (1 + \alpha + \delta) + \rho^2 \sigma_{\alpha}^2$, $u > e$ and $y \in \bb S_+^{d-1}$, 
\begin{align}\label{Bound-bb-tau}
\bb P \left( \tau_u^y \leq \rho \log u - b \sqrt{ (\log u) (\log \log u) }  \right) \leq 
\bb P \left( \tau_u \leq \rho \log u - b \sqrt{ (\log u) (\log \log u) }  \right) 
\leq  c (\log u)^{-\delta} u^{-\alpha}. 
\end{align} 
\end{lemma}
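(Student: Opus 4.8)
The plan is to choose $s_0 = \alpha$ and apply Lemma~\ref{Lem_Yn_u} with a carefully chosen perturbation parameter $\ee$ depending on $u$, at $n = \upzeta_u := \floor{\rho \log u - b \sqrt{(\log u)(\log\log u)}}$. Since $\tau_u \leq \tau_u^y$ for any $y \in \bb S_+^{d-1}$, it suffices to bound $\bb P(\tau_u \leq \upzeta_u) = \bb P(|V_{\upzeta_u}| > u)$. From Lemma~\ref{Lem_Yn_u} with $s = \alpha$ we get, for suitable $\ee > 0$,
\begin{align*}
\bb P(|V_{\upzeta_u}| > u) \leq c\, \upzeta_u^{1 + \alpha + \ee} \exp\left\{ \upzeta_u \left[ \ee \Lambda'(\alpha) + \ee^2 \Lambda''(\alpha) \right] \right\} \kappa(\alpha)^{\upzeta_u} u^{-(\alpha + \ee)}.
\end{align*}
Because $\Lambda(\alpha) = 0$, i.e.\ $\kappa(\alpha) = 1$, the factor $\kappa(\alpha)^{\upzeta_u}$ disappears, which is the structural reason the argument works cleanly at $s = \alpha$ (contrast with Lemma~\ref{Lem_tau_Low}, where one perturbs to $s > \alpha$). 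So the bound reduces to
\begin{align*}
\bb P(|V_{\upzeta_u}| > u) \leq c\, \upzeta_u^{1 + \alpha + \ee}\, u^{-\alpha}\, \exp\left\{ \upzeta_u \ee \Lambda'(\alpha) + \upzeta_u \ee^2 \Lambda''(\alpha) - \ee \log u \right\}.
\end{align*}

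Next I would insert $\upzeta_u \leq \rho \log u - b\sqrt{(\log u)(\log\log u)}$ together with $\rho = 1/\Lambda'(\alpha)$ and $\sigma_\alpha^2 = \Lambda''(\alpha)$, so that the leading exponential term $\upzeta_u \ee \Lambda'(\alpha) - \ee \log u$ telescopes to at most $-\ee b \Lambda'(\alpha)\sqrt{(\log u)(\log\log u)}$, while the $\ee^2$ term is bounded by $\ee^2 \rho \sigma_\alpha^2 \log u$. The remaining task is to choose $\ee = \ee(u)$ optimally. The natural choice is $\ee(u) = c_0 \sqrt{\frac{\log\log u}{\log u}}$ for an appropriate constant $c_0$; with this choice $\ee \log u$ and $\ee^2 \log u$ are both of order $\sqrt{(\log u)(\log\log u)}$ and $\log\log u$ respectively, the polynomial prefactor $\upzeta_u^{1 + \alpha + \ee}$ is $(\log u)^{1 + \alpha + o(1)}$, and $u^{-\ee}$ decays like $\exp\{-c_0 \sqrt{(\log u)(\log\log u)}\}$. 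Collecting the exponents of $\log u$, one needs to verify that the quadratic-in-$c_0$ coefficient of $\sqrt{(\log u)(\log\log u)}$ in the exponent, namely something like $-b\Lambda'(\alpha) c_0 + (\text{correction from } \ee^2) $, can be made negative and large enough to absorb the $(1 + \alpha + \delta)\log\log u$ coming from the prefactor and to leave a net factor $(\log u)^{-\delta}$. This is where the precise threshold $b > \rho(1 + \alpha + \delta) + \rho^2 \sigma_\alpha^2$ enters: tracking constants shows that taking $c_0$ slightly larger than $\rho$ (so $\ee \Lambda'(\alpha)\upzeta_u$ wins) forces exactly this lower bound on $b$ once the $\rho^2\sigma_\alpha^2$ contribution of the $\ee^2\upzeta_u\Lambda''(\alpha)$ term and the $(1+\alpha+\delta)$ contribution of the prefactor are included.

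The bookkeeping with the three competing scales --- $u^{-\alpha}$ (the main factor we keep), $u^{-\ee} = \exp\{-\ee \log u\}$, and the polynomial-in-$\log u$ prefactor --- is the main obstacle, since all error terms live at the scale $\sqrt{(\log u)(\log\log u)}$ or $\log\log u$ and one must be sure the signs and constants combine to give $(\log u)^{-\delta}$ rather than a positive power. A secondary point is to confirm that Lemma~\ref{Lem_Yn_u} indeed applies with the stated hypotheses: we need $s = \alpha \in [s_0, s_0 + \ee)$ with $s_0 = \alpha$, the finiteness $\kappa(\alpha + t) < \infty$ and $\kappa_Q(\alpha + t) < \infty$ for $|t| \leq \ee$ (which is exactly the standing assumption of the present lemma), and $\Lambda(\alpha) = 0 \geq 0$ with $\Lambda'(\alpha) > 0$, $\Lambda''(\alpha) > 0$ (guaranteed by \ref{Condi_ExpMom} and \ref{Condi_NonArith} via convexity and $\sigma_\alpha > 0$). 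Once these are in place, the estimate is uniform in $y$ because the bound $\bb P(\langle y, V_n\rangle > u) \leq \bb P(|V_n| > u)$ in Lemma~\ref{Lem_Yn_u} is itself uniform in $y \in \bb S_+^{d-1}$, so the final inequality $\bb P(\tau_u^y \leq \rho \log u - b\sqrt{(\log u)(\log\log u)}) \leq c(\log u)^{-\delta} u^{-\alpha}$ holds with a constant $c$ independent of $y$.
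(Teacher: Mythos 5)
Your proposal follows essentially the same route as the paper's proof: reduce to $\bb P(\tau_u \le \upzeta_u) = \bb P(|V_{\upzeta_u}|>u)$, apply Lemma \ref{Lem_Yn_u} at $s=\alpha$ (where $\kappa(\alpha)=1$, $\Lambda'(\alpha)=\rho^{-1}$, $\Lambda''(\alpha)=\sigma_\alpha^2$), choose $\ee(u)\asymp\sqrt{\log\log u/\log u}$, and collect the $\log\log u$-scale terms to get a factor $(\log u)^{1+\alpha-b\rho^{-1}+\rho\sigma_\alpha^2}\le(\log u)^{-\delta}$ under the stated threshold on $b$, with uniformity in $y$ coming from $\langle y,V_n\rangle\le|V_n|$. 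One small correction to your bookkeeping: the constant in $\ee(u)=c_0\sqrt{\log\log u/\log u}$ that reproduces the threshold $b>\rho(1+\alpha+\delta)+\rho^2\sigma_\alpha^2$ exactly is $c_0=1$ (the paper's choice), since a general $c_0$ gives the condition $b>\frac{\rho}{c_0}(1+\alpha+\delta)+\rho^2\sigma_\alpha^2 c_0$, not "$c_0$ slightly larger than $\rho$".
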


\begin{proof}
Since $\tau_u \leq \tau_u^y$ for any $y \in \bb S_+^{d-1}$, 
it suffices to prove the second inequality in \eqref{Bound-bb-tau}. 
Setting $\upzeta_u = \floor{ \rho \log u - b \sqrt{ (\log u) (\log \log u) } }$
and applying Lemma \ref{Lem_Yn_u} with $s = \alpha$ and $n = \upzeta_u$, we get 
\begin{align}\label{pf-V-upzeta-u-lem}
\bb P \left( \tau_u \leq \rho \log u - b \sqrt{ (\log u) (\log \log u) }  \right) = 
\bb P ( |V_{\upzeta_u}| > u ) 
 \leq  c  \upzeta_u^{1 + \alpha + \ee}  
     \exp \left\{ \upzeta_u  \left[ \ee \rho^{-1} + \ee^2 \sigma_{\alpha}^2 \right] \right\}  
     u^{-(\alpha + \ee)}, 
\end{align}
where we used the fact that 
$\kappa(\alpha) = 1$, $\Lambda'(\alpha) = \rho^{-1}$ and $\Lambda''(\alpha) = \sigma_{\alpha}^2$. 
We choose $\ee = \ee (u) = \sqrt{ \frac{\log \log u}{\log u} }.$ 
Then, using $\upzeta_u \leq \rho \log u$ and $(\log u)^{\ee} \leq c$, we get $\upzeta_u^{1 + \alpha + \ee} \leq c (\log u)^{1+\alpha}$
and 
\begin{align*}
\exp \big\{ \upzeta_u  \left[ \ee \rho^{-1} + \ee^2 \sigma_{\alpha}^2 \right] \big\}
& \leq  \exp \big\{ \big( \rho \log u - b \sqrt{ (\log u) (\log \log u) } \big)  \left[ \ee \rho^{-1} + \ee^2 \sigma_{\alpha}^2 \right] \big\}  \notag\\
& = u^{\ee}  \exp \left\{ - b \rho^{-1} \log\log u + \rho \sigma_{\alpha}^2 \log\log u
     - b \sigma_{\alpha}^2  \frac{(\log\log u)^{3/2}}{(\log u)^{1/2}}   \right\}    \notag\\
& \leq  u^{\ee}  \exp \left\{ - b \rho^{-1} \log\log u + \rho \sigma_{\alpha}^2 \log\log u  \right\}   \notag\\
& =  u^{\ee} (\log u)^{ - b \rho^{-1} + \rho \sigma_{\alpha}^2  }. 
\end{align*} 
Hence $\upzeta_u^{1 + \alpha + \ee}  
  \exp \big\{ \upzeta_u  \left[ \ee \rho^{-1} + \ee^2 \sigma_{\alpha}^2 \right] \big\} u^{- \ee}
  \leq c (\log u)^{ 1 + \alpha - b \rho^{-1} + \rho \sigma_{\alpha}^2  },$ 
which, together with \eqref{pf-V-upzeta-u-lem}, concludes the proof of \eqref{Bound-bb-tau}
 by taking $b > \rho (1 + \alpha + \delta) + \rho^2 \sigma_{\alpha}^2$. 
\end{proof}

Next we investigate the right tail of the hitting time of the process $V-V_n$ to the level $u$. 
To this aim, we denote 
\begin{align}\label{Def_bar_Vn}
\bar{V}_n = V - V_n = \sum_{j = n}^{\infty} (M_1 \ldots M_j) Q_{j+1},   
\qquad    \bar{\tau}_u = \sup \left\{ n \geq 1: |\bar{V}_n| >u  \right\}
\end{align}
and for $y \in \bb S_+^{d-1}$, 
\begin{align*}
\bar{\tau}_u^y = \sup \left\{ n \geq 1:  \langle y, \bar{V}_n \rangle  >u  \right\}.  
\end{align*}
Since $\langle y, \bar{V}_n \rangle \leq |\bar{V}_n|$, we have $\bar{\tau}_u^y \leq \bar{\tau}_u$ for any $y \in \bb S_+^{d-1}$. 
As an analog of Lemma \ref{Lem_tau_Low_b}, 
the following result gives an estimate of the right tail of the exceeding time $\bar{\tau}_u$ of $\bar{V}_n$ to the level $u$.

\begin{lemma}\label{Lem_tau_Low_c}
Let $\alpha$ and $\rho$ be defined by \eqref{Def_alpha} and \eqref{Def_rho}, respectively. 
Assume \ref{Condi_ExpMom}, \ref{Condi_AP} and that there exists $\ee \in (0,1)$ such that 
$\kappa(\alpha + t) < \infty$ and $\kappa_Q(\alpha + t) < \infty$ for all $|t| \leq \ee$. 
Then, for any $\delta >0$, there exists a constant $c > 0$ 
such that for any $b > \max\{ \rho, 8 (2 \alpha + 1 + \delta) \rho^2 \sigma_{\alpha}^2 \}$, $u > e$ and $y \in \bb S_+^{d-1}$, 
\begin{align}\label{bound-bar-tau-u}
\bb P \left( \bar{\tau}_u^y  \geq  \rho \log u + b \sqrt{ (\log u) (\log \log u) }  \right)  \leq 
\bb P \left( \bar{\tau}_u  \geq  \rho \log u + b \sqrt{ (\log u) (\log \log u) }  \right) 
\leq  c (\log u)^{-\delta} u^{-\alpha}. 
\end{align}
\end{lemma}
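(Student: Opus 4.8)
The plan is to collapse the statement about the whole backward tail $(\bar V_n)_{n \geq N}$ into a single-time estimate --- which is possible here precisely because all the matrices and vectors involved are nonnegative --- and then to control that single probability by an infinite-sum version of Lemma~\ref{Lem_Yn_u}. First, for $n \leq n'$ one has $\bar V_n = \bar V_{n'} + \sum_{j=n}^{n'-1} \Pi_j Q_{j+1} \geq \bar V_{n'}$ coordinatewise, and since $|\cdot|$ is the $\ell^1$-norm on $\bb R_+^d$ the map $n \mapsto |\bar V_n|$ is nonincreasing (and tends to $0$ almost surely, so $\bar\tau_u < \infty$ a.s.). Hence, writing $N := \lceil \rho \log u + b\sqrt{(\log u)(\log\log u)}\,\rceil$, monotonicity gives
\begin{align*}
\bb P\Big( \bar\tau_u \geq \rho \log u + b\sqrt{(\log u)(\log\log u)}\, \Big) = \bb P\big( |\bar V_N| > u \big),
\end{align*}
while $\bar\tau_u^y \leq \bar\tau_u$ because $\langle y, \bar V_n\rangle \leq |\bar V_n|$ for $y \in \bb S_+^{d-1}$; so it suffices to bound $\bb P(|\bar V_N| > u)$ and let $b$ be large.

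Next I would estimate $\bb P(|\bar V_N| > u)$ along the lines of the proof of Lemma~\ref{Lem_Yn_u}, but for the infinite sum $\bar V_N = \sum_{j\geq N}\Pi_j Q_{j+1}$. Since this vector has nonnegative entries, $|\bar V_N| = \sum_{j\geq N}|\Pi_j Q_{j+1}|$, so for any $q\in(0,1)$ the event $\{|\bar V_N|>u\}$ is contained in $\bigcup_{j\geq N}\{|\Pi_j Q_{j+1}| > u(1-q)q^{j-N}\}$. Using a union bound, $|\Pi_j Q_{j+1}| \leq \|\Pi_j\|\,|Q_{j+1}|$ with $\Pi_j$ and $Q_{j+1}$ independent, Markov's inequality at an exponent $s := \alpha - \ee$ with a vanishing $\ee = \ee(u)$ (the exponent must be taken strictly below $\alpha$), the bound $\bb E\|\Pi_j\|^s \leq c_K\kappa(s)^j$ from Lemma~\ref{Lem_kappa}, the moment hypothesis bounding $\bb E|Q_{j+1}|^s$ uniformly for $s$ near $\alpha$, and the fact that $\kappa(s)<1$ because $\Lambda$ is convex with $\Lambda(0)=\Lambda(\alpha)=0$, one obtains, for any $q\in(\kappa(s)^{1/s},1)$,
\begin{align*}
\bb P(|\bar V_N| > u) \leq \frac{c\,(1-q)^{-s}}{1 - q^{-s}\kappa(s)}\, \kappa(s)^N\, u^{-s}.
\end{align*}
Since $\kappa(s)^{1/s} = \exp\big(\Lambda(\alpha-\ee)/(\alpha-\ee)\big) = 1 - \tfrac{\ee}{\rho\alpha} + O(\ee^2) \to 1$ as $\ee\downarrow 0$, I would take $q=q_u$ with $1-q_u \asymp \ee$ (e.g. $q_u = 1-\tfrac{\ee}{2\rho\alpha}$), so that also $1-q_u^{-s}\kappa(s)\asymp \ee$ and the prefactor above is $O(\ee^{-(\alpha+1)})$.

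It remains to choose $\ee=\ee(u)$ and balance. By Taylor's formula, with $\Lambda(\alpha)=0$, $\Lambda'(\alpha)=\rho^{-1}$, $\Lambda''(\alpha)=\sigma_\alpha^2$, one has $\Lambda(\alpha-\ee) = -\rho^{-1}\ee + \tfrac12\sigma_\alpha^2\ee^2 + O(\ee^3)$, so that, writing $\kappa(s)^N u^{-s} = u^{-\alpha} u^{\ee} e^{N\Lambda(\alpha-\ee)}$ and using $N \geq \rho\log u + b\sqrt{(\log u)(\log\log u)}$,
\begin{align*}
\kappa(\alpha-\ee)^N u^{-(\alpha-\ee)} \leq u^{-\alpha}\exp\Big\{ -\rho^{-1}b\,\ee\sqrt{(\log u)(\log\log u)} + \tfrac12\sigma_\alpha^2\rho\,\ee^2\log u + O\big(\ee^3\log u + \ee^2\sqrt{(\log u)(\log\log u)}\,\big)\Big\}.
\end{align*}
Taking $\ee = \ee(u) = b_1\sqrt{\log\log u/\log u}$ for a suitable constant $b_1>0$, the bracket reduces to $\big(-\rho^{-1}b\,b_1 + \tfrac12\sigma_\alpha^2\rho\,b_1^2 + o(1)\big)\log\log u$, while $\ee^{-(\alpha+1)}$ contributes at most $\tfrac{\alpha+1}{2}\log\log u + o(\log\log u)$; altogether $\bb P(|\bar V_N| > u) \leq c\, u^{-\alpha}(\log u)^{\frac{\alpha+1}{2} - \rho^{-1}b\,b_1 + \frac12\sigma_\alpha^2\rho\,b_1^2 + o(1)}$. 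A short computation shows that, for $b_1$ of order $(\rho\sigma_\alpha^2)^{-1}$, the exponent is $\leq -\delta$ whenever $b > \max\{\rho, 8(2\alpha+1+\delta)\rho^2\sigma_\alpha^2\}$, which gives \eqref{bound-bar-tau-u} for $\bar\tau_u$; the estimate for $\bar\tau_u^y$ then follows from $\bar\tau_u^y \leq \bar\tau_u$.

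I expect the main obstacle to be exactly the tension already present in Lemmas~\ref{Lem_tau_Low}--\ref{Lem_tau_Low_b}: no moment of order $\geq \alpha$ is available (the relevant expectations are infinite there), so the only way to pay for summable weights is to push $s$ below $\alpha$, which yields a Cram\'er-type gain of order merely $\ee\log u$ in the exponent; this must be weighed against the constants $(1-q_u)^{-(\alpha-\ee)}$ and $(1-q_u^{-(\alpha-\ee)}\kappa(\alpha-\ee))^{-1}$, which blow up polynomially in $\log u$ as $q_u\uparrow 1$. Arranging $\ee=\ee(u)$ and $q=q_u$ simultaneously so that the bookkeeping closes with precisely the stated lower bound on $b$ is where the care lies.
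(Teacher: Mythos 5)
Your argument is correct and lands the stated threshold on $b$; it shares the paper's skeleton (which follows \cite[Lemma 4.2]{BCDZ16}): reduce by monotonicity of $n\mapsto|\bar V_n|$ to the single-time tail $\bb P(|\bar V_N|>u)$, bound it by a term-by-term union bound with Markov's inequality at an exponent slightly below $\alpha$, use Lemma \ref{Lem_kappa} together with the Taylor expansion of $\Lambda$ at $\alpha$, and balance with $\ee\asymp\sqrt{\log\log u/\log u}$; the case of $\bar\tau_u^y$ follows from $\bar\tau_u^y\leq\bar\tau_u$ in both treatments. The bookkeeping, however, is genuinely different: the paper distributes $u$ over the terms with polynomial weights (whence the $j^{2\alpha}$ factor), optimizes the Chernoff parameter $\ee=\ee_j$ separately for each $j$, and splits the sum at $j=\upzeta_u$, which costs a prefactor $(\log u)^{2\alpha+1}$ against a gain $\frac{b}{8\rho^2\sigma_\alpha^2}\log\log u$; you use geometric weights and a single $u$-dependent exponent $s=\alpha-\ee(u)$, paying only $\ee^{-(\alpha+1)}\asymp(\log u)^{(\alpha+1)/2}$ against a gain $\bigl(\rho^{-1}b\,b_1-\tfrac12\sigma_\alpha^2\rho\,b_1^2\bigr)\log\log u$, which is in fact slightly more economical. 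The one place you should be more careful is the deferred ``short computation'': with $b_1$ merely ``of order $(\rho\sigma_\alpha^2)^{-1}$'' (a constant not tied to $b$) the quadratic cost $\tfrac12\sigma_\alpha^2\rho b_1^2$ is not automatically dominated for all admissible parameter values; the clean choice is $b_1$ proportional to $b/(\rho^2\sigma_\alpha^2)$, e.g.\ $b_1=b/(2\rho^2\sigma_\alpha^2)$, which gives the net gain $\tfrac{3b^2}{8\rho^3\sigma_\alpha^2}$. Then multiplying the two hypotheses $b>\rho$ and $b>8(2\alpha+1+\delta)\rho^2\sigma_\alpha^2$ yields $b^2>8(2\alpha+1+\delta)\rho^3\sigma_\alpha^2$, so the exponent of $\log u$ in your final bound is at most $\tfrac{\alpha+1}{2}-3(2\alpha+1+\delta)+o(1)<-\delta$ for large $u$ (small $u>e$ being absorbed into the constant), exactly as required in \eqref{bound-bar-tau-u}.
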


\begin{proof}
Since the proof follows from that of \cite[Lemma 4.2]{BCDZ16},
we only outline the main differences. 
As $\bar{\tau}_u^y \leq \bar{\tau}_u$ for any $y \in \bb S_+^{d-1}$, it suffices to prove the second inequality in \eqref{bound-bar-tau-u}.
Proceeding in the same way as in the proof of (4.9) and (4.10) in \cite{BCDZ16},
we set $\upzeta_u = \floor{ \rho \log u + b \sqrt{ (\log u) (\log \log u) } }$ and use 
the independence of $\Pi_j$ and $Q_{j+1}$, and Lemma \ref{Lem_kappa}
to get that for small $\ee > 0$,  
\begin{align*}
\bb P \left( \bar{\tau}_u  \geq  \rho \log u + b \sqrt{ (\log u) (\log \log u) }  \right) = 
\bb P \left( |\bar{V}_{\upzeta_u} | > u \right) 
 \leq   c  u^{-\alpha} \sum_{j = 1}^{\infty} j^{2 \alpha} A_j(u), 
\end{align*}
where 
$A_j(u) = u^{\ee} 
  \exp \Big\{ (\upzeta_u + j -1) \big( - \frac{\ee}{\rho} +  \ee^2 \sigma_{\alpha}^2 \big) \Big\}$. 
Using the definition of $\upzeta_u$ and 
choosing $\ee =  \frac{  b \sqrt{ (\log u) (\log \log u) } + j-1  }{ 2 \rho \sigma_{\alpha}^2 (\upzeta_u + j -1) }$, 
we get 
\begin{align*}
A_j(u)  & \leq  \exp \Big\{ - \frac{\ee}{\rho}  \big[b \sqrt{ (\log u) (\log \log u) } + j-1 \big] 
        + \ee^2 \sigma_{\alpha}^2  (\upzeta_u + j -1)   \Big\}  \notag\\
& \leq   \exp \bigg\{ -  \frac{ \big[ b \sqrt{ (\log u) (\log \log u) } + j-1 \big]^2 }
                        { 4 \rho^2 \sigma_{\alpha}^2 (\upzeta_u + j -1) } \bigg\} =: B_j(u).  
\end{align*}
Since $B_j(u) \leq 
\exp \left\{ - \frac{b^2 \log \log u }{ 4 \rho^2 \sigma_{\alpha}^2 (\rho + b)  }  \right\}
\leq \exp \left\{ - \frac{b \log \log u}{ 8 \rho^2 \sigma_{\alpha}^2 }  \right\}$ for $1 \leq j \leq \upzeta_u$ and $b \geq \rho$, 
we get 
\begin{align*}
\sum_{j = 1}^{ \upzeta_u }  j^{2 \alpha} B_j(u) 
 \leq   \upzeta_u^{2 \alpha + 1}  
   \exp \bigg\{ - \frac{b \log \log u}{ 8 \rho^2 \sigma_{\alpha}^2 }  \bigg\}
   \leq  c (\log u)^{2 \alpha + 1 - \frac{b}{ 8\rho^2 \sigma_{\alpha}^2 }}.
\end{align*}
Since for $\upzeta_u \leq j-1$ and $b \geq \rho$, 
\begin{align*}
B_j(u) \leq  \exp \left\{ - \frac{  b \sqrt{ \log u }(j-1) }
     { 2\rho^2 \sigma_{\alpha}^2 (\upzeta_u + j -1) }
       -  \frac{ (j-1)^2 }{ 4\rho^2 \sigma_{\alpha}^2 (\upzeta_u + j -1) } \right\}
   \leq \exp \left\{ - \frac{ b \sqrt{\log u} }{ 4 \rho^2 \sigma_{\alpha}^2 }  -  \frac{ j-1}{ 8 \rho^2 \sigma_{\alpha}^2 } \right\}, 
\end{align*}
we get $\sum_{j = \upzeta_u + 1}^{ \infty } j^{2 \alpha} B_j(u) 
\leq c  \exp \Big\{ - \frac{ b \sqrt{\log u} }{ 4 \rho^2 \sigma_{\alpha}^2 }  \Big\}$. 
Combining the above estimates, we obtain 
$\bb P \left( |\bar{V}_{\upzeta_u} | > u \right)  
\leq  c   (\log u)^{2 \alpha + 1 - \frac{ b}{ 8\rho^2 \sigma_{\alpha}^2 }}  u^{-\alpha}.$
Taking $b > \max\{ \rho, 8 (2 \alpha + 1 + \delta) \rho^2 \sigma_{\alpha}^2 \}$,  
we conclude the proof of  \eqref{bound-bar-tau-u}.  
\end{proof}

Now we are equipped to establish the following stronger version of  Theorem \ref{Thm_LLN_Perpe} 
on the conditioned weak laws of large numbers for the first passage times $\tau_u$ and $\tau_u^y$.

\begin{theorem}\label{Thm_LLN_Perpe-stronger}
Let $\rho$ be defined by \eqref{Def_rho}. 
Assume \ref{Condi_ExpMom}, \ref{Condi_AP} and \ref{Condi_NonArith}. 
Then there exists a constant $b>0$ such that
\begin{align}\label{thm-LLN-tau-u}
\lim_{u \to \infty} \bb P 
\left(  \left. \left| \frac{\tau_u}{ \log u } - \rho \right| 
    >  b \frac{ \sqrt{\log \log u} }{ \sqrt{\log u} }  \, \right|  \tau_u < \infty \right)
= 0, 
\end{align}
and uniformly in $y \in \bb S_+^{d-1}$, 
\begin{align}\label{thm-LLN-tau-u-y}
\lim_{u \to \infty} \bb P 
\left( \left. \left| \frac{\tau_u^y}{ \log u } - \rho \right| >  b \frac{ \sqrt{\log \log u} }{ \sqrt{\log u} }   \,  \right|  \tau_u^y < \infty \right)
= 0. 
\end{align}
\end{theorem}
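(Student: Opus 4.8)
The plan is to establish the theorem by bounding the \emph{unconditioned} probability of each exceptional event by $o(u^{-\alpha})$ and then dividing by $\bb P(\tau_u<\infty)\sim\mathscr C u^{-\alpha}$, respectively by $\bb P(\tau_u^y<\infty)\sim\mathscr C r_\alpha^*(y)u^{-\alpha}$; the uniformity in $y$ will come from the fact that $r_\alpha^*$ is continuous and strictly positive on the compact set $\bb S_+^{d-1}$, so that $\inf_{y}r_\alpha^*(y)>0$ and $r_\alpha^*(y)\le1$. I would fix $b$ once and for all, large enough for all the applications of Lemmas \ref{Lem_tau_Low_b} and \ref{Lem_tau_Low_c} below (taking $\delta=\alpha+1$ in both; the moment conditions required there are guaranteed by \ref{Condi_ExpMom}), and abbreviate $n_-=\rho\log u-b\sqrt{(\log u)(\log\log u)}$ and $n_+=\floor{\rho\log u+b\sqrt{(\log u)(\log\log u)}}$. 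The key structural remark is that, since $\Pi_{n-1}Q_n\ge0$ coordinatewise, both $(|V_n|)_n$ and $(\langle y,V_n\rangle)_n$ are nondecreasing, while $\bar V_n=V-V_n$ decreases coordinatewise to $0$; hence $\{\tau_u<\infty\}=\{|V|>u\}$, $\{\tau_u>n_+\}=\{|V_{n_+}|\le u\}$, $|V|=|V_{n_+}|+|\bar V_{n_+}|$ and $|\bar V_n|$ is nonincreasing, with the analogous statements for $\langle y,\cdot\rangle$.

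The lower deviation is immediate: $\{\tau_u\le n_-\}\subseteq\{\tau_u<\infty\}$ and $\tau_u\le\tau_u^y$, so by Lemma \ref{Lem_tau_Low_b}, $\bb P(\tau_u^y\le n_-)\le\bb P(\tau_u\le n_-)\le c(\log u)^{-(\alpha+1)}u^{-\alpha}=o(u^{-\alpha})$, uniformly in $y$.

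For the upper deviation I would set $t=t(u)=(\log u)^{-1}$ and split $\{n_+<\tau_u<\infty\}=\{|V_{n_+}|\le u<|V|\}$ according to the size of $|\bar V_{n_+}|=|V|-|V_{n_+}|$: if $|\bar V_{n_+}|>ut$ then $\bar\tau_{ut}\ge n_+$ (monotonicity of $|\bar V_n|$), while if $|\bar V_{n_+}|\le ut$ then $|V|\le|V_{n_+}|+ut\le u(1+t)$, so
\[
\bb P(n_+<\tau_u<\infty)\ \le\ \bb P(\bar\tau_{ut}\ge n_+)\ +\ \bb P\big(u<|V|\le u(1+t)\big).
\]
Since $n_+\ge\rho\log(ut)+b\sqrt{(\log(ut))(\log\log(ut))}$ for $u$ large (the difference being at least $\rho\log\log u-1$), Lemma \ref{Lem_tau_Low_c} applied at level $ut$ gives $\bb P(\bar\tau_{ut}\ge n_+)\le c(\log(ut))^{-(\alpha+1)}(ut)^{-\alpha}\le c'(\log u)^{-1}u^{-\alpha}=o(u^{-\alpha})$; and Kesten's tail \eqref{Exit_tail_V_n-intro} gives $\bb P(u<|V|\le u(1+t))=\mathscr C u^{-\alpha}\big(1-(1+t)^{-\alpha}\big)+o(u^{-\alpha})=o(u^{-\alpha})$ because $t\to0$. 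For $\tau_u^y$ the same argument applies with $\langle y,\cdot\rangle$ replacing $|\cdot|$, using $\bar\tau_{ut}^y\le\bar\tau_{ut}$, the uniform-in-$y$ version of the directional tail \eqref{Exit_tail_V_n} and $r_\alpha^*(y)\le1$, which yields $\bb P(n_+<\tau_u^y<\infty)=o(u^{-\alpha})$ uniformly in $y$. Combining with the lower deviation and dividing by the respective exit probabilities (for $\tau_u^y$ one uses $\bb P(\tau_u^y<\infty)\ge\tfrac12\mathscr C\big(\inf_{y}r_\alpha^*(y)\big)u^{-\alpha}$ for $u$ large) proves \eqref{thm-LLN-tau-u} and \eqref{thm-LLN-tau-u-y}.

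The main obstacle is the ``thin shell'' term $\bb P(u<|V|\le u(1+t))$ and its directional analogue: the auxiliary level $ut$ must remain macroscopically comparable to $u$, lest the bound in Lemma \ref{Lem_tau_Low_c} deteriorate, yet $t$ must vanish so that Kesten's exact asymptotics render the shell negligible; the choice $t=(\log u)^{-1}$ achieves this balance once $b$ (equivalently, the exponent $\delta$) has been taken large enough. The secondary subtlety, relevant only for $\tau_u^y$, is to have Kesten's directional tail asymptotic hold uniformly in $y\in\bb S_+^{d-1}$, which one extracts from the multivariate regular variation of $V$ (cf.\ Lemma \ref{Lem_CM_Weak_Conv}) together with the continuity and strict positivity of $r_\alpha^*$ on the compact sphere.
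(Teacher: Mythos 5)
Your proposal is correct and follows essentially the same route as the paper: the lower deviation is handled by Lemma \ref{Lem_tau_Low_b} together with Kesten's tail, and the upper deviation by applying Lemma \ref{Lem_tau_Low_c} to $\bar V_{n}$ at a slightly reduced level (your $ut$ with $t=(\log u)^{-1}$ plays the role of the paper's slowly vanishing $a(u)u$), plus the continuity of Kesten's asymptotic to kill the thin shell. Your direct event decomposition $\{|V_{n_+}|\le u<|V|\}$ is just a cleaner bookkeeping of the paper's statement $\bb P(|V|>u)\sim\bb P(|V_{\upzeta_u}|>u)$, and the uniform-in-$y$ use of \eqref{Exit_tail_V_n} that you flag is glossed over at the same level of detail in the paper.
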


\begin{proof}[Proof of Theorems \ref{Thm_LLN_Perpe} and \ref{Thm_LLN_Perpe-stronger}]
As mentioned above, it suffices to establish Theorem \ref{Thm_LLN_Perpe-stronger} since Theorem \ref{Thm_LLN_Perpe}
is a consequence of Theorem \ref{Thm_LLN_Perpe-stronger}. 
We first prove \eqref{thm-LLN-tau-u}. 
On the one hand, from \eqref{Bound-bb-tau} of Lemma \ref{Lem_tau_Low_b} and \eqref{Exit_tail_V_n-intro}, we derive that 
\begin{align}\label{LLN_Upper}
\lim_{u \to \infty}
\bb P \left( \left.  \frac{\tau_u}{ \log u } -  \rho 
           < - b \frac{ \sqrt{\log \log u} }{ \sqrt{\log u} }  \right| \tau_u < \infty  \right) = 0.
\end{align}
On the other hand, setting $\upzeta_u = \floor{ \rho \log u + b \sqrt{ (\log u) (\log \log u) } }$
and recalling that $\bar{V}_n$ is defined by \eqref{Def_bar_Vn},  
from \eqref{bound-bar-tau-u} of Lemma \ref{Lem_tau_Low_c}, we get that, as $u \to \infty$, 
\begin{align}\label{tail-bar-V-upzeta}
\bb P \left( |\bar{V}_{\upzeta_u}| > u \right) 
= \bb P \left( \bar{\tau}_u  \geq  \rho \log u + b \sqrt{ (\log u) (\log \log u) }  \right) 
= o \left( u^{-\alpha} \right). 
\end{align}
Since $V = V_{\upzeta_u} + \bar{V}_{\upzeta_u}$, we claim that the tail decay of $V$ is determined by that of $V_{\upzeta_u}$. 
Indeed, since both $V_{\upzeta_u}$ and $\bar{V}_{\upzeta_u}$ belong to $\bb R_+^d$, it holds that
$\bb P ( |V| > u ) \geq \bb P ( | V_{\upzeta_u} | > u ).$
By the monotonicity of $\bar{V}_{n}$ in $n$ and \eqref{tail-bar-V-upzeta}, 
there exists $a(u) \downarrow 0$ such that as $u \to \infty$, 
\begin{align*}
\bb P \left( |\bar{V}_{\upzeta_u}| > a(u)  u \right) 
\leq  \bb P \left( |\bar{V}_{ \upzeta_{a(u) u} }| > a(u)  u \right) 
= o \left( (a(u) u)^{-\alpha} \right) = o \left( u^{-\alpha} \right). 
\end{align*}
It follows that 
\begin{align*}
\bb P ( |V| > u ) 
& =  \bb P \left( |V| > u, | \bar{V}_{\upzeta_u} | \leq a(u) u \right)
   + \bb P \left( |V| > u, | \bar{V}_{\upzeta_u} | > a(u) u \right)  \nonumber\\
& \leq \bb P \left( | V_{\upzeta_u} | > (1 - a(u))u \right) + \bb P \left( | \bar{V}_{\upzeta_u} | > a(u) u \right)  \notag\\
&   =   \bb P \left( | V_{\upzeta_u} | > (1 - a(u))u \right) +  o \left( u^{-\alpha} \right). 
\end{align*}
Hence
\begin{align}\label{Asymptotic-V-V-upzeta}
\bb P (|V| > u)  \sim  \bb P (|V_{\upzeta_u}| > u)   \quad  \mbox{as} \  u \to \infty. 
\end{align}
Since $\{ |V_{\upzeta_u}| > u \} \subseteq  \{ |V| > u \}$ and $\{ \tau_u < \infty \} = \{ |V| > u \}$, we obtain 
\begin{align*}
 \lim_{u \to \infty}
    \bb P \left(  \left.  \frac{\tau_u}{ \log u } -  \rho 
        \leq  b \frac{ \sqrt{\log \log u} }{ \sqrt{\log u} }  \right| \tau_u < \infty  \right)  
=  \lim_{u \to \infty} \frac{ \bb P (|V_{\upzeta_u}| > u) }{ \bb P (|V| > u) } = 1. 
\end{align*}
Combining this with \eqref{LLN_Upper}, we conclude the proof of \eqref{thm-LLN-tau-u}. 

The proof of \eqref{thm-LLN-tau-u-y} can be done in the same way by using \eqref{Exit_tail_V_n} and Lemmas \ref{Lem_tau_Low_b} and \ref{Lem_tau_Low_c}. 
\end{proof}

\section{Proof of Theorem \ref{Thm_CLT_Perpe}}

The goal of this section is to establish Theorem \ref{Thm_CLT_Perpe} 
on the conditioned central limit theorems for the first passage times $\tau_u$ and $\tau_u^y$. 
In the proof, we adapt the powerful method developed in \cite{BCDZ16} for the univariate case, 
incorporating two new elements which turn out to be crucial in our multivariate setting. 
The first one is concerned with the precise asymptotic behavior of the joint distribution of $(\frac{V}{|V|}, V)$, as detailed in Lemma \ref{Lem_V_weak_Uni_bb} below. 
Its proof builds on Lemma \ref{Lem_V_weak_Uni} (see below), 
and Lemma \ref{Lem_CM_Weak_Conv} which refines Kesten's asymptotic \eqref{Exit_tail_V_n-intro} and is established in \cite{CM18}. 
The second key element involves the following Bahadur-Rao-Petrov type large deviation asymptotics for  
the vector norm $|\Pi_n x|$ and the coefficients $\langle y, \Pi_n x \rangle$, 
both of which are also central to establishing Theorem \ref{Thm_LD_Perpe_Petrov}. 

\begin{theorem} \label{PetrovThm}  
Let $s\in I_{\mu}^{\circ}$ and $q=\Lambda'(s)$. 
Let $(l_n)_{n \geq 1}$ be any sequence of positive numbers such that $\lim_{n \to \infty} l_n = 0$. 
If \ref{Condi_ExpMom}, \ref{Condi_AP} and \ref{Condi_NonArith} hold, 
then we have, as $n \to \infty$, uniformly in $|l| \leq l_n$ and $x \in \bb S_+^{d-1}$, 
\begin{align}\label{LDPet_VectorNorm}
\mathbb{P} \big( \log |\Pi_n x|  \geq n(q+l) \big) = 
   \frac{r_s(x)}{ \nu_s(r_s) } 
   \frac{ \exp \left( -n \Lambda^*(q+l) \right)  }{s\sigma_s\sqrt{2\pi n}} ( 1 + o(1)).
\end{align}
If \ref{Condi_ExpMom}, \ref{Condi_NonArith} and \ref{Condi_Kes_Weak} hold, 
then we have, as $n \to \infty$, uniformly in $|l| \leq l_n$ and $x, y \in \bb S_+^{d-1}$, 
\begin{align} \label{LDPet_ScalarProduct}
 \mathbb{P} \big( \log \langle y, \Pi_n x \rangle \geq n(q+l) \big)  
 =  \frac{r_s(x) r^*_s(y)}{ \nu_s(r_s) } 
\frac{ \exp \left( -n   \Lambda^*(q+l) \right) } {s \sigma_{s}\sqrt{2\pi n}} ( 1 + o(1) ).  
\end{align}
\end{theorem}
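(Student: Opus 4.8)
The plan is to regard both estimates as Bahadur--Rao--Petrov asymptotics for an additive functional of the Markov chain $(G_k\cdot x)_{k\ge0}$ on $\bb S_+^{d-1}$, where $G_k=M_k\cdots M_1$: by the i.i.d.\ reversal $\Pi_n\overset{d}{=}G_n$ one has $\log|\Pi_n x|\overset{d}{=}\log|G_n x|=\sum_{k=1}^n\log|M_k(G_{k-1}\!\cdot\! x)|$, and similarly $\log\langle y,\Pi_n x\rangle\overset{d}{=}\log\langle y,G_n x\rangle$, which additionally brings in the transposed chain through $P_s^*$. For a \emph{fixed} deviation value $q+l$, \eqref{LDPet_VectorNorm} is the content of \cite{XGL19a} (under \ref{Condi_ExpMom}, \ref{Condi_AP}, \ref{Condi_NonArith}) and \eqref{LDPet_ScalarProduct} that of \cite{XGL22} (under \ref{Condi_ExpMom}, \ref{Condi_NonArith}, \ref{Condi_Kes_Weak}); the only genuinely new point is uniformity over the vanishing perturbation $l$, $|l|\le l_n$, $l_n\to0$. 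So the real work is to revisit the change-of-measure argument underlying those papers and run it with a target value that drifts with $n$.

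First I would re-parametrize. Since $\Lambda'$ is an increasing analytic bijection of $I_\mu^\circ$ onto its image (by \cite{BM16}), for each small $l$ there is a unique $s_l\in I_\mu^\circ$ with $\Lambda'(s_l)=q+l$, and $s_l\to s$ as $l\to0$. Using the eigen-data $(\kappa(s_l),r_{s_l},\nu_{s_l})$ of the transfer operator $P_{s_l}$ from \cite{BDGM14}, introduce the $s_l$-tilted law $\bb Q_{s_l}^x$ of the chain started at $x$ by weighting the original path up to time $n$ by $|G_n x|^{s_l}r_{s_l}(G_n\cdot x)\big/\big(\kappa(s_l)^n r_{s_l}(x)\big)$. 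Writing $S_n:=\log|G_n x|$, this gives the exact identity
\begin{align*}
\bb P\big(S_n\ge n(q+l)\big)
 = \kappa(s_l)^n r_{s_l}(x)\,
   \bb E_{\bb Q_{s_l}^x}\!\left[\frac{e^{-s_l S_n}}{r_{s_l}(G_n\cdot x)}\,\mathds 1_{\{S_n\ge n(q+l)\}}\right],
\end{align*}
whose deterministic prefactor is $\kappa(s_l)^n e^{-s_l n(q+l)}=e^{n(\Lambda(s_l)-s_l\Lambda'(s_l))}=e^{-n\Lambda^*(q+l)}$ by the choice of $s_l$; this produces exactly the exponential rate in the claim, and is the reason why one re-tilts rather than keeping $s$ fixed.

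Next I would feed in a local limit theorem for $S_n$ under $\bb Q_{s_l}^x$. By the spectral-gap/quasi-compactness method for random matrix products (cf.\ \cite{Hen97, BM16}, under \ref{Condi_AP}; for the coefficient case one also uses \ref{Condi_Kes_Weak} to keep $\log\langle y,G_n\cdot x\rangle$ bounded on the sphere), condition \ref{Condi_NonArith} --- which also gives $\sigma_s=\sqrt{\Lambda''(s)}>0$, see \cite{BM16} --- yields an Edgeworth-type expansion: under $\bb Q_{s_l}^x$ the chain drifts at rate $\Lambda'(s_l)=q+l$, so writing $S_n=n(q+l)+v$ the density of $v$ near $0$ is $\frac{1}{\sigma_{s_l}\sqrt{2\pi n}}(1+o(1))$ uniformly in $x$ and in $s_l$ over a neighbourhood of $s$; integrating $e^{-s_l v}$ over $v\ge0$ against it gives the factor $\frac{1}{s_l\sigma_{s_l}\sqrt{2\pi n}}$. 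Simultaneously $G_n\cdot x$ equidistributes to the stationary law $r_{s_l}\nu_{s_l}/\nu_{s_l}(r_{s_l})$, so the weight $r_{s_l}(G_n\cdot x)^{-1}$ contributes $1/\nu_{s_l}(r_{s_l})$ since $\nu_{s_l}$ is a probability measure; the joint statement follows from the spectral perturbation of the Fourier--Laplace transform of $P_{s_l}$, which is where the machinery of \cite{XGL19a, XGL19b, XGL22} is really used, with the Cram\'er series \eqref{def-xi-s-t} (cf.\ \cite{Pet75}) bookkeeping the lower-order corrections. Finally, continuity on compacts of $I_\mu^\circ$ of $\kappa(s)$, of $r_s$ in the sup-norm (\cite{BDGM14}), of $\nu_s(r_s)$, of $\sigma_s$ and of $r_s^*$, together with uniform control of the Edgeworth remainder, lets me replace $s_l$ by $s$ in the limiting constants as $l=l_n\to0$, which gives the stated uniform asymptotics (the factor $r_s(x)$ for \eqref{LDPet_VectorNorm}, and additionally $r_s^*(y)$ in \eqref{LDPet_ScalarProduct} from the transposed chain attached to $y$).

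The main obstacle is precisely this last uniformity: one needs the Edgeworth/local limit expansion for the Markov-chain additive functional with an error term uniform \emph{simultaneously} in the base point $x\in\bb S_+^{d-1}$ (and $y$), in the tilting parameter over a small interval around $s$, and in the centring value $n(q+l)$ as it drifts at speed $o(1)$ from the mean $nq$; for the coefficient case there is the extra point that, under \ref{Condi_Kes_Weak}, the increment between $\log\langle y,G_n x\rangle$ and $\log|G_n x|$ stays bounded and contributes only the factor $r_s^*(y)$ via the eigenfunction $r_s^*$ of the conjugate operator $P_s^*$. All of these uniformities are exactly what \cite{XGL19a, XGL22} provide, so in the final write-up the theorem is obtained by quoting those results and checking that their standing assumptions are implied by \ref{Condi_ExpMom}--\ref{Condi_Kes_Weak}.
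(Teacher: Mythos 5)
Your proposal matches the paper's treatment: the paper gives no independent proof of this theorem but simply quotes \eqref{LDPet_VectorNorm} from \cite{XGL19a} and \eqref{LDPet_ScalarProduct} from \cite{XGL22}, whose statements already include the uniformity in the vanishing perturbation $|l|\leq l_n$ and in $x,y\in\bb S_+^{d-1}$, which is exactly your bottom line. Your sketch of the tilting by the eigendata of $P_{s_l}$, the Edgeworth/local limit step under the tilted law, and the appearance of $1/\nu_s(r_s)$ and $r_s^*(y)$ is a faithful account of how those cited results are obtained, so there is nothing to add beyond the citation the paper itself relies on.
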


The asymptotics \eqref{LDPet_VectorNorm} and \eqref{LDPet_ScalarProduct} have been recently established in \cite{XGL19a}
and \cite{XGL22}, respectively.  
We also mention that, for products of invertible random matrices, 
precise large deviation asymptotics for the coefficients $\langle y, \Pi_n x \rangle$
has also been obtained recently in \cite{XGL23}. 
We shall see below that the perturbation $l$ on $q$ in \eqref{LDPet_VectorNorm} and \eqref{LDPet_ScalarProduct}
plays a very important role in the proof of Theorem \ref{Thm_CLT_Perpe}. 

By \cite[Lemma 4.1]{XGL19a}, 
the rate function $\Lambda^*(q+l)$ admits the following expansion: for $q=\Lambda'(s)$ and $l$ in a small neighborhood
of $0$, 
\begin{align} \label{Def Jsl}
\Lambda^*(q+l)
   = \Lambda^{*}(q) + sl + \frac{l^2}{2 \sigma_s^2} - \frac{l^3}{\sigma_s^3} \xi_s\left(\frac{l}{\sigma_s}\right), 
\end{align}
where $\xi_s$ is defined by \eqref{def-xi-s-t}.

Now we proceed to establish \eqref{CLT-tau-u} of Theorem \ref{Thm_CLT_Perpe}. 
By Lemma \ref{Lem_tau_Low_b}, 
for any $\delta >0$, there exists a constant $c>0$ such that
for any $b > \rho (1 + \alpha + \delta) + \rho^2 \sigma_{\alpha}^2$ and $u > e$,  
\begin{align}\label{Pf_CLT_001}
\bb P \left( \tau_u \leq \rho \log u - b \sqrt{ (\log u) (\log \log u) }  \right) 
\leq c (\log u)^{-\delta} u^{-\alpha}. 
\end{align}
For fixed $b > \rho (1 + \alpha + \delta) + \rho^2 \sigma_{\alpha}^2$ and $t \in \bb R$,  we set
\begin{align}\label{Def_L_rho_u}
L_{\rho} (u) = b \sqrt{ (\log \log u) / \log u }, 
\quad 
\upzeta_u = \floor{ (\rho - L_{\rho} (u))  \log u },  
\quad
\eta_{u,t} = \floor{ \rho \log u + t \rho^{3/2} \sigma_{\alpha} \sqrt{\log u} }. 
\end{align}
We denote
\begin{align}\label{Def_Yu}
\mathcal{Y}_{u,t}  = Q_{\upzeta_u + 1} + M_{\upzeta_u + 1} Q_{\upzeta_u + 2} + \ldots 
    + (M_{\upzeta_u + 1} \ldots M_{\eta_{u,t} - 1}) Q_{\eta_{u,t}}, 
    \quad  \mathcal{X}_{u,t}  = \frac{ \mathcal{Y}_{u,t} }{ |\mathcal{Y}_{u,t}| }. 
\end{align}
Since $V_{\eta_{u,t}} = V_{\upzeta_u} + \Pi_{\upzeta_u} \mathcal{Y}_{u,t}$,   
by using \eqref{Pf_CLT_001} and repeating the arguments in the proof of \eqref{Asymptotic-V-V-upzeta},
one can verify that the contribution of the first part $V_{\upzeta_u}$ can be neglected. Namely, as $u \to \infty$, 
\begin{align}\label{pf-CLT-equivalence-tau-u}
\bb P \big( \tau_u \leq  \eta_{u,t} ) =  \bb P \left( | V_{\eta_{u,t}} | > u \right)
    = \bb P ( |\Pi_{\upzeta_u} \mathcal{Y}_{u,t}| > u ) ( 1 + o(1) ),  
\end{align}
and uniformly in $y \in \bb S_+^{d-1}$, 
\begin{align}\label{pf-CLT-equivalence-tau-u-y}
\bb P \big( \tau_u^y \leq  \eta_{u,t} ) =  \bb P \left( \langle y, V_{\eta_{u,t}} \rangle  > u \right)
    = \bb P ( \langle y, \Pi_{\upzeta_u} \mathcal{Y}_{u,t} \rangle > u ) ( 1 + o(1) ).  
\end{align}
We denote by $F_{u,t}$ the probability distribution function of the vector 
$\mathcal{Y}_{u,t}$ on $\bb R^d_+$, i.e. $F_{u,t}(dv) = \bb P (\mathcal{Y}_{u,t} \in dv)$, 
and write $\tilde{v} = v/|v|$ for the direction of $v \in \bb R^d_+ \setminus \{0\}$.
Then, by \eqref{Def_Yu} and the independence of $\Pi_{\upzeta_u}$ and $\mathcal{Y}_{u,t}$, 
\begin{align}\label{Pf_CLT_Id_a}
\bb P \left( | \Pi_{\upzeta_u} \mathcal{Y}_{u,t} | > u \right) 
& = \bb P \left( \log | \Pi_{\upzeta_u} \mathcal{X}_{u,t} | > \log u - \log | \mathcal{Y}_{u,t} |  \right)  \nonumber\\
& = \int_{\bb R^d_+}  \bb P( \log |\Pi_{\upzeta_u} \tilde{v}| > \log u - \log |v|) F_{u,t}(dv)
\end{align}
and, similarly, for any $y \in \bb S_+^{d-1}$,
\begin{align}\label{Pf_CLT_Id_a-y}
\bb P \left( \langle y, \Pi_{\upzeta_u} \mathcal{Y}_{u,t} \rangle > u \right) 
 = \int_{\bb R^d_+}  \bb P( \log \langle y, \Pi_{\upzeta_u} \tilde{v} \rangle > \log u - \log |v|) F_{u,t}(dv). 
\end{align}
For any $v \in \bb R_+^d \setminus \{0\}$, 
the inequality $\log |\Pi_{\upzeta_u} \tilde{v}| > \log u - \log |v|$ holds if and only if 
\begin{align}\label{Pf_Def_l_uv_002}
\frac{ \log |\Pi_{\upzeta_u} \tilde{v}| }{ \upzeta_u } > \frac{ \log u - \log |v| }{ \upzeta_u }
 = :  \frac{1}{\rho} + l_{u,v},
\end{align}
where, by \eqref{Def_L_rho_u}, 
\begin{align}\label{Property_l_uv}
l_{u,v} 
 = \frac{1}{\upzeta_u}  \left( \log u - \frac{1}{\rho} \upzeta_u - \log |v|  \right)
 = \frac{1}{\upzeta_u}  \left( \frac{L_{\rho} (u)}{\rho} \log u  - \log |v| + \delta_u \right), 
\end{align}
with $|\delta_u| \leq \frac{1}{\rho}$. 
Similarly, for $v \in \bb R_+^d \setminus \{0\}$ and $y \in \bb S_+^{d-1}$, 
the inequality $\log \langle y, \Pi_{\upzeta_u} \tilde{v} \rangle > \log u - \log |v|$ holds if and only if 
$\frac{ \log \langle y, \Pi_{\upzeta_u} \tilde{v} \rangle }{ \upzeta_u } > \frac{ \log u - \log |v| }{ \upzeta_u }
 =   \frac{1}{\rho} + l_{u,v}$. 
For brevity, we denote for fixed $t \in \bb R$ and $\Delta >0$, 
\begin{align}\label{Def_D_u_I_u}  
D(u) = \frac{L_{\rho} (u)}{\rho} \log u  + \delta_u,
\quad
I_{u,t} = \left[ 0, D(u) + \sigma_{\alpha} (t + \Delta) \sqrt{\upzeta_u} \right],
\quad
A_{u,t} = \left\{ \log | \mathcal{Y}_{u,t} | \in  I_{u,t}  \right\}, 
\end{align}
and by $A_{u,t}^c$ its complement.
Consider the following decompositions: 
\begin{align}
\bb P (| \Pi_{\upzeta_u} \mathcal{Y}_{u,t} | > u) 
& = \bb P (| \Pi_{\upzeta_u} \mathcal{Y}_{u,t} | > u, \, A_{u,t} )  + \bb P (| \Pi_{\upzeta_u} \mathcal{Y}_{u,t} | > u, \,  A_{u,t}^c),
 \label{Pf_CLT_Decom_P} \\
\bb P ( \langle y, \Pi_{\upzeta_u} \mathcal{Y}_{u,t} \rangle > u) 
&  = \bb P ( \langle y, \Pi_{\upzeta_u} \mathcal{Y}_{u,t} \rangle > u, \, A_{u,t} )  
    + \bb P ( \langle y, \Pi_{\upzeta_u} \mathcal{Y}_{u,t} \rangle > u, \,  A_{u,t}^c).  \label{Pf_CLT_Decom_P-y}
\end{align}
We will show that the second terms in \eqref{Pf_CLT_Decom_P} and \eqref{Pf_CLT_Decom_P-y} are negligible compared with the first terms. 
Our first objective is to obtain precise asymptotics 
for the first terms as $u \to \infty$, 
by applying Theorem \ref{PetrovThm} on the precise large deviation asymptotics
for products of random matrices. 

\begin{lemma}
Let $\rho$ be defined by \eqref{Def_rho}. 
If \ref{Condi_ExpMom}, \ref{Condi_AP} and \ref{Condi_NonArith} hold, 
then, for any $t \in \bb R$, as $u \to \infty$, 
\begin{align}\label{Pf_Upzeta_Yu_Au}
 \bb P (| \Pi_{\upzeta_u} \mathcal{Y}_{u,t} | > u, \,  A_{u,t} )    
= \frac{ u^{- \alpha} }{\sqrt{2 \pi}}   
  \int_{ \bb D_{u,t} }  e^{- \frac{1}{2} (\log |v|)^2} H_{u,t}(dv) ( 1 + o(1)), 
\end{align}
where 
\begin{align}\label{def-Du-Ju}
\bb D_{u,t} = \left\{ v \in \bb R_+^d:  \log |v| \in J_{u,t}  \right\}  \quad 
\mbox{with} \  J_{u,t} =  \left[ - \frac{ D(u) }{ \sigma_{\alpha} \sqrt{\upzeta_u} }, \,  t + \Delta \right]
\end{align}
and 
for any Borel set $B \subset \bb R_+^d$, 
with $T_u^{-1}(v) = e^{ D(u) + \sigma_{\alpha} \sqrt{\upzeta_u} |v| } \frac{v}{|v|}$ for any $v \in \bb R^d \setminus \{0\}$, 
\begin{align}\label{def-Hu-B}
H_{u,t}(B)  =  \frac{1}{\alpha \sigma_{\alpha}  \nu_{\alpha}(r_{\alpha}) \sqrt{\upzeta_u}}
     \int_{T_u^{-1} (B)}  r_{\alpha} \left( \frac{v}{|v|} \right)  e^{\alpha \log |v|}  F_{u,t}(dv). 
\end{align}
If \ref{Condi_ExpMom}, \ref{Condi_NonArith} and \ref{Condi_Kes_Weak} hold, 
then for any $y \in \bb S_+^{d-1}$, as $u \to \infty$, 
\begin{align}\label{Pf_Upzeta_Yu_Au-y}
 \bb P ( \langle y, \Pi_{\upzeta_u} \mathcal{Y}_{u,t} \rangle > u, \,  A_{u,t} )    
= r_{\alpha}^*(y) \frac{ u^{- \alpha} }{\sqrt{2 \pi}}   
  \int_{ \bb D_{u,t} }  e^{- \frac{1}{2} (\log |v|)^2} H_{u,t}(dv) ( 1 + o(1)). 
\end{align}
\end{lemma}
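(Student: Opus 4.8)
The plan is to start from the integral representation \eqref{Pf_CLT_Id_a} of $\bb P(|\Pi_{\upzeta_u}\mathcal{Y}_{u,t}| > u)$, restrict the $F_{u,t}$-integral to $\{v:\log|v|\in I_{u,t}\}$ (which, by independence of $\Pi_{\upzeta_u}$ and $\mathcal{Y}_{u,t}$, is exactly $\bb P(|\Pi_{\upzeta_u}\mathcal{Y}_{u,t}|>u,\,A_{u,t})$), and then apply the Bahadur-Rao-Petrov asymptotic \eqref{LDPet_VectorNorm} inside the integral with $s=\alpha$ and $q=\Lambda'(\alpha)=1/\rho$. The first thing to check is that the perturbation $l_{u,v}$ of \eqref{Pf_Def_l_uv_002}--\eqref{Property_l_uv} tends to $0$ uniformly over all $v$ with $\log|v|\in I_{u,t}$. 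By \eqref{Def_D_u_I_u} one has $\upzeta_u\,l_{u,v} = D(u)-\log|v|$, and on this range $|D(u)-\log|v|| \le \max\{D(u),\,\sigma_\alpha(t+\Delta)\sqrt{\upzeta_u}\}$; since $D(u)=O\bigl(\sqrt{(\log u)(\log\log u)}\bigr)$ and $\upzeta_u\sim\rho\log u$ by \eqref{Def_L_rho_u}, we obtain $|l_{u,v}| = O\bigl(\sqrt{\log\log u/\log u}\bigr)\to 0$. Hence for large $u$ we have $|l_{u,v}|\le l_{\upzeta_u}$ for a suitable vanishing sequence, so Theorem \ref{PetrovThm} applies uniformly in $v$ and in the direction $\tilde v=v/|v|$, and the factor $1+o(1)$ can be pulled out of the integral.

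Next I would substitute the expansion \eqref{Def Jsl} of $\Lambda^*(q+l_{u,v})$ around $l=0$. The cubic and higher terms contribute $\upzeta_u\,l_{u,v}^3 = (D(u)-\log|v|)^3/\upzeta_u^2 = O\bigl((\log\log u)^{3/2}/(\log u)^{1/2}\bigr)\to 0$ in the exponent and are negligible, while the quadratic term survives as $\upzeta_u\,l_{u,v}^2/(2\sigma_\alpha^2) = (D(u)-\log|v|)^2/(2\sigma_\alpha^2\upzeta_u)$. Using $\Lambda(\alpha)=0$ one has $\Lambda^*(1/\rho)=\alpha/\rho$, and the definitions of $\upzeta_u$ and $D(u)$ in \eqref{Def_L_rho_u} and \eqref{Def_D_u_I_u} give the exact identity $\tfrac1\rho\upzeta_u+D(u)=\log u$, whence $\tfrac\alpha\rho\upzeta_u+\alpha D(u)=\alpha\log u$. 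Combining these, uniformly in $v$ with $\log|v|\in I_{u,t}$,
\begin{align*}
\exp\bigl(-\upzeta_u\,\Lambda^*(q+l_{u,v})\bigr) = u^{-\alpha}\,|v|^{\alpha}\,\exp\!\left(-\frac{(D(u)-\log|v|)^2}{2\sigma_\alpha^2\upzeta_u}\right)(1+o(1)).
\end{align*}

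Inserting this together with the prefactor $\dfrac{r_\alpha(\tilde v)}{\nu_\alpha(r_\alpha)\,\alpha\sigma_\alpha\sqrt{2\pi\upzeta_u}}$ from \eqref{LDPet_VectorNorm} into the restricted version of \eqref{Pf_CLT_Id_a} yields
\begin{align*}
\bb P(|\Pi_{\upzeta_u}\mathcal{Y}_{u,t}|>u,\,A_{u,t}) = \frac{u^{-\alpha}(1+o(1))}{\alpha\sigma_\alpha\nu_\alpha(r_\alpha)\sqrt{2\pi\upzeta_u}}\int_{\{\log|v|\in I_{u,t}\}} r_\alpha(\tilde v)\,|v|^{\alpha}\,\exp\!\left(-\frac{(D(u)-\log|v|)^2}{2\sigma_\alpha^2\upzeta_u}\right)F_{u,t}(dv),
\end{align*}
and the last step is the change of variables $v\mapsto T_u(v)$ (with $T_u^{-1}$ as in \eqref{def-Hu-B}), which rescales the radial coordinate by $\log|v|\mapsto(\log|v|-D(u))/(\sigma_\alpha\sqrt{\upzeta_u})$ and preserves directions: it maps the window $\{\log|v|\in I_{u,t}\}$ onto $\bb D_{u,t}=\{\log|v|\in J_{u,t}\}$, turns the Gaussian factor into $e^{-\frac12(\log|v|)^2}$, and identifies integration against $r_\alpha(\tilde v)|v|^{\alpha}F_{u,t}(dv)/(\alpha\sigma_\alpha\nu_\alpha(r_\alpha)\sqrt{\upzeta_u})$ with integration against the measure $H_{u,t}$, finite since $\bb E[|\mathcal{Y}_{u,t}|^{\alpha}]<\infty$ ($\mathcal{Y}_{u,t}$ being a finite sum). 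This gives \eqref{Pf_Upzeta_Yu_Au}. For \eqref{Pf_Upzeta_Yu_Au-y} the argument is verbatim the same with \eqref{LDPet_ScalarProduct} in place of \eqref{LDPet_VectorNorm} --- taking $\tilde v$ (the direction of $\mathcal{Y}_{u,t}$) in the role of $x$ and the fixed $y\in\bb S_+^{d-1}$ --- which uses the stronger condition \ref{Condi_Kes_Weak} and contributes the extra factor $r_\alpha^*(y)$, the uniformity in $y$ being exactly that of \eqref{LDPet_ScalarProduct}.

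I expect the main difficulty to be the uniformity bookkeeping: one must simultaneously ensure that $l_{u,v}\to 0$ uniformly over the window $I_{u,t}$ (narrow relative to $\upzeta_u$, yet growing without bound in absolute terms), that the $o(1)$ in Theorem \ref{PetrovThm} is uniform in both the perturbation $l$ and the direction $\tilde v$ so that it factors out of the $F_{u,t}$-integral, and that the Cram\'er-series remainder in \eqref{Def Jsl} stays controlled on this window. The scale $D(u)\asymp\sqrt{(\log u)(\log\log u)}$ is precisely the borderline at which the quadratic correction is retained --- producing the Gaussian weight that will ultimately yield the normal limit in \eqref{CLT-tau-u} --- while the cubic one vanishes.
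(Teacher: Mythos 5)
Your proposal is correct and follows essentially the same route as the paper's proof: restrict the integral representation to $A_{u,t}$, apply the uniform Bahadur--Rao--Petrov asymptotic \eqref{LDPet_VectorNorm} (resp.\ \eqref{LDPet_ScalarProduct}) with the vanishing perturbation $l_{u,v}$, expand $\Lambda^*$ via \eqref{Def Jsl} retaining the quadratic term and discarding the cubic one since $\upzeta_u l_{u,v}^3 \to 0$, use the identity $\tfrac{1}{\rho}\upzeta_u + D(u) = \log u$ to extract $u^{-\alpha}|v|^{\alpha}$ times the Gaussian weight, and change variables by $T_u$ to land on $\bb D_{u,t}$ and $H_{u,t}$. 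Your explicit uniformity bookkeeping for $l_{u,v}$ over the window $I_{u,t}$ matches the paper's (unquantified) claim and is the only cosmetic difference.
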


\begin{proof}
We first prove \eqref{Pf_Upzeta_Yu_Au} 
by applying the Bahadur-Rao-Petrov type large deviation asymptotics for the vector norm $|\Pi_{\upzeta_u} \tilde{v}|$,  
cf.\ \eqref{LDPet_VectorNorm}. 
The method of proof is similar to that of (4.26) in \cite{BCDZ16}. 
Recall that $\rho = 1/ \Lambda'(\alpha)$ with $\alpha >0$ (cf.\ \eqref{Def_rho}) 
and $\upzeta_u = \floor{ \rho \log u} (1 + o(1))$ as $u \to \infty$ (cf.\  \eqref{Def_L_rho_u}). 
For brevity, we denote
\begin{align}\label{def-bb-Bu}
\bb B_{u,t} = \left\{ v \in \bb R_+^d:  \log |v| \in I_{u,t}  \right\},
\end{align}
where $I_{u,t}$ is given in \eqref{Def_D_u_I_u}. 
As $u \to \infty$, it holds uniformly in $\log |v| \in  I_{u,t}$ that
$l_{u,v} \to 0$ and $\upzeta_u l_{u,v}^3 \to 0$. 
From \eqref{Pf_CLT_Id_a}, the definition of $A_{u,t}$ and $\bb B_{u,t}$, \eqref{Property_l_uv} and \eqref{Def_L_rho_u},
we get that with $\tilde{v} = v/|v|$, 
\begin{align*}
 \bb P (| \Pi_{\upzeta_u} \mathcal{Y}_{u,t} | > u, \,  A_{u,t} )    
 =  \int_{\bb B_{u,t}} \bb P \big( \log |\Pi_{\upzeta_u} \tilde{v}| >  \upzeta_u \left( \Lambda'(\alpha)  + l_{u,v} \right) \big) F_{u,t}(dv). 
\end{align*}
By \eqref{LDPet_VectorNorm}, 
\eqref{Def Jsl} and the fact that $\upzeta_u l_{u,v}^3 \to 0$,    
it follows that as $u \to \infty$, 
uniformly in $\log |v| \in I_{u,t}$, 
\begin{align*}
 \bb P ( | \Pi_{\upzeta_u} \mathcal{Y}_{u,t} | > u, \,  A_{u,t} )    
 = \frac{1}{\alpha \sigma_{\alpha}  \nu_{\alpha}(r_{\alpha}) \sqrt{2 \pi \upzeta_u}}    \int_{ \bb B_{u,t} } r_{\alpha}(\tilde{v})  
   e^{- \upzeta_u \Lambda^*(\Lambda'(\alpha)) - \alpha \upzeta_u  l_{u,v} - \upzeta_u l_{u,v}^2 / (2 \sigma_{\alpha}^2)} 
      F_{u,t}(dv) ( 1 + o(1)).  
\end{align*}
Since $\Lambda(\alpha) = 0$ (cf. \eqref{Def_alpha}) and $\Lambda'(\alpha) = 1/\rho$, 
we have $\Lambda^*(\Lambda'(\alpha)) = \alpha \Lambda'(\alpha) - \Lambda(\alpha) = \alpha / \rho$.
From the first equality in \eqref{Property_l_uv}, we see that  
$- \upzeta_u  \frac{\alpha}{\rho} - \alpha \upzeta_u  l_{u,v} = \alpha \log |v| - \alpha \log u$.
By \eqref{Def_D_u_I_u} and the second equality in \eqref{Property_l_uv}, 
it holds that $l_{u,v} = \frac{1}{\upzeta_u}  (D(u) - \log |v|)$. 
Hence
\begin{align*}
e^{- \upzeta_u \Lambda^*(\Lambda'(\alpha)) - \alpha \upzeta_u  l_{u,v} - \upzeta_u l_{u,v}^2 / (2 \sigma_{\alpha}^2)}
 = u^{- \alpha}  e^{\alpha \log |v|}    e^{ - \frac{1}{2} |T_u(v)|^2 },   
\end{align*}
where $T_u: \bb R^d \setminus \{0\} \to \bb R^d$ is defined by 
\begin{align}\label{Def_Tu_aa}
T_u(v) = \frac{ \log |v| - D(u) }{ \sigma_{\alpha} \sqrt{\upzeta_u} }  \frac{v}{|v|}.  
\end{align}
Therefore, 
\begin{align*}
& \bb P ( | \Pi_{\upzeta_u} \mathcal{Y}_{u,t} | > u, \,  A_{u,t} )   
 =  \frac{ u^{- \alpha} }{\alpha \sigma_{\alpha}  \nu_{\alpha}(r_{\alpha}) \sqrt{2 \pi \upzeta_u}}   
  \int_{\bb B_u} r_{\alpha}(\tilde{v})  
   e^{\alpha \log |v|}    e^{- \frac{1}{2} |T_u(v)|^2 }
      F_{u,t}(dv) ( 1 + o(1)). 
\end{align*}
Under the mapping $T_u$, the domain $\bb B_{u,t}$ (cf.\ \eqref{def-bb-Bu}) is transformed into $\bb D_{u,t}$ given by \eqref{def-Du-Ju}. 
Note that, under $T_u$, 
the direction $\tilde{v}$ of each vector $v \in \bb D_{u,t}$ is the same as that in $\bb B_{u,t}$. 
Denote $G_{u,t}(B) = F_{u,t}(T_u^{-1} B)$ for any Borel set $B \subset \bb R_+^d$. 
By a change of variable, it follows that as $u \to \infty$, 
\begin{align*}
 \bb P ( | \Pi_{\upzeta_u} \mathcal{Y}_{u,t} | > u, \,  A_{u,t} )    
& =  \frac{ u^{- \alpha} }{\alpha \sigma_{\alpha}  \nu_{\alpha}(r_{\alpha}) \sqrt{2 \pi \upzeta_u}}   
  \int_{ \bb D_{u,t} } r_{\alpha}(\tilde{v})  
   e^{\alpha \log | T_u^{-1} v |}    e^{- \frac{1}{2} |v|^2}
      G_{u,t}(dv) ( 1 + o(1))   \nonumber\\
& = \frac{ u^{- \alpha} }{\sqrt{2 \pi}}   
  \int_{\bb D_{u,t}}  e^{- \frac{1}{2} |v|^2} H_{u,t}(dv) ( 1 + o(1)), 
\end{align*}
where in the last line we used the fact that for any Borel set $B \subset \bb R_+^d$, 
\begin{align*}
H_{u,t}(B) 
 = \frac{1}{\alpha \sigma_{\alpha}  \nu_{\alpha}(r_{\alpha}) \sqrt{\upzeta_u}}
    \int_B  r_{\alpha}(\tilde{v})  e^{\alpha \log | T_u^{-1} v |}  G_{u,t}(dv).  
\end{align*}
This ends the proof of \eqref{Pf_Upzeta_Yu_Au}. 
Using \eqref{LDPet_ScalarProduct} instead of \eqref{LDPet_VectorNorm},
the proof of \eqref{Pf_Upzeta_Yu_Au-y} can be done in the same way. 
\end{proof}

In order to study the asymptotic behavior of the integral on the right hand side of \eqref{Pf_Upzeta_Yu_Au}, 
it is central to characterize the measure $H_{u,t}$ on $\bb D_{u,t}$. 
To this aim, we shall first establish a weak convergence result for the measure $H_{u,t}$ as $u \to \infty$. 
For $- \infty < a_1 < a_2 < \infty$, denote
\begin{align*}
\bb D_{a_1, a_2} = \left\{ v \in \bb R_+^d:  \log |v| \in [a_1, a_2]  \right\}.
\end{align*} 
We start by giving an upper bound for $H_{u,t} (\bb D_{a_1, a_2})$, 
which will be used in the proof of Proposition \ref{Prop_CLT_aaa}. 
Note that it is important that the constant $c$ does not depend on $u$, $a_1$ and $a_2$.

\begin{lemma}\label{Lem_Huannuals_Bound}
Assume \ref{Condi_ExpMom}, \ref{Condi_AP} and \ref{Condi_NonArith}. 
Then there exists a constant $c> 0$ such that for any $- \infty < a_1 < a_2 < \infty$, $t \in \bb R$ and $u \geq 0$, 
\begin{align*}
H_{u,t} (\bb D_{a_1, a_2}) \leq c (a_2 - a_1) + \frac{c}{\sqrt{\upzeta_u}}.   
\end{align*}
\end{lemma}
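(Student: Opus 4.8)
The plan is to reduce the bound to a \emph{uniform} tail estimate for the finite-time perpetuity $\mathcal{Y}_{u,t}$, which in turn is controlled by the stationary solution $V$ through a stochastic domination argument resting on nonnegativity of the model.

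First I would unwind the definition \eqref{def-Hu-B} of $H_{u,t}$ at the Borel set $B = \bb D_{a_1,a_2}$. Since $r_\alpha$ is continuous and strictly positive on the compact set $\bb S_+^{d-1}$ it is bounded above, and $\nu_\alpha(r_\alpha) > 0$; hence there is an absolute constant $c_1$ with
\begin{align*}
H_{u,t}(\bb D_{a_1,a_2}) &\le \frac{c_1}{\sqrt{\upzeta_u}} \int_{T_u^{-1}(\bb D_{a_1,a_2})} |v|^\alpha \, F_{u,t}(dv) \\
&= \frac{c_1}{\sqrt{\upzeta_u}}\, \bb E\Big[ |\mathcal{Y}_{u,t}|^\alpha \mathds 1\big\{ \mathcal{Y}_{u,t}\in T_u^{-1}(\bb D_{a_1,a_2}) \big\} \Big].
\end{align*}
From the explicit form $T_u^{-1}(v)=e^{D(u)+\sigma_\alpha\sqrt{\upzeta_u}|v|}\,v/|v|$ together with the definition of $\bb D_{a_1,a_2}$, one checks that $T_u^{-1}(\bb D_{a_1,a_2})\subseteq\{w\in\bb R_+^d:\log|w|\in[b_1,b_2]\}$ for some $b_1<b_2$ with $b_2-b_1\le\sigma_\alpha\sqrt{\upzeta_u}(a_2-a_1)$. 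So it suffices to prove that, for a constant $c_2$ independent of $u$, $t$, $b_1$ and $b_2$,
\begin{align*}
\bb E\Big[ |\mathcal{Y}_{u,t}|^\alpha \mathds 1\big\{ \log|\mathcal{Y}_{u,t}| \in [b_1,b_2] \big\}\Big] \le c_2 (b_2-b_1+1),
\end{align*}
since plugging $b_2-b_1\le\sigma_\alpha\sqrt{\upzeta_u}(a_2-a_1)$ back in then yields $H_{u,t}(\bb D_{a_1,a_2})\le c_1 c_2\sigma_\alpha(a_2-a_1)+c_1 c_2/\sqrt{\upzeta_u}$.

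The key observation is that $\mathcal{Y}_{u,t}$ is merely a finite segment of the perpetuity series: by the i.i.d.\ assumption and a shift of indices, $\mathcal{Y}_{u,t}\overset{d}{=}V_N$ with $N=\eta_{u,t}-\upzeta_u$ (we may assume $u$ large enough that $N\ge 1$ and $\upzeta_u\ge 1$, the bounded-$u$ range being trivial). Because each $M_j$ and each $Q_j$ has nonnegative entries, $V-V_N=\bar V_N\in\bb R_+^d$, so $V_N\le V$ componentwise and hence $\bb P(|\mathcal{Y}_{u,t}|>v)=\bb P(|V_N|>v)\le\bb P(|V|>v)$ for every $v>0$. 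Combining this with Kesten's tail asymptotic \eqref{Exit_tail_V_n-intro} (valid under the standing hypotheses) for large $v$, and with the trivial bound $\bb P(\cdot)\le 1$ for small $v$, gives an absolute constant $c_3$ with $\bb P(|\mathcal{Y}_{u,t}|>v)\le c_3 v^{-\alpha}$ for all $v>0$, uniformly in $u$ and $t$. To conclude, cover $[b_1,b_2]$ by unit intervals $[k,k+1]$, $k\in\bb Z$: on $\{\log|\mathcal{Y}_{u,t}|\in[k,k+1]\}$ one has $|\mathcal{Y}_{u,t}|^\alpha\le e^{\alpha(k+1)}$, so for $k\ge 0$ the corresponding term is at most $e^{\alpha(k+1)}\bb P(\log|\mathcal{Y}_{u,t}|\ge k)\le c_3 e^\alpha$, while the terms with $k<0$ are bounded by $e^{\alpha(k+1)}$ and sum to $\sum_{k<0}e^{\alpha(k+1)}<\infty$. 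As at most $b_2-b_1+2$ intervals with $k\ge 0$ meet $[b_1,b_2]$, this gives the desired bound $c_2(b_2-b_1+1)$.

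The main obstacle is ensuring that the tail bound for $\mathcal{Y}_{u,t}$ carries a constant that does \emph{not} depend on $u$ (or $t$): a naive estimate through $\bb E|\mathcal{Y}_{u,t}|^\alpha=\bb E|V_N|^\alpha$ is useless because, since $\kappa(\alpha)=1$, this moment grows linearly in $N$ while $N/\sqrt{\upzeta_u}\to\infty$. The stochastic domination $\mathcal{Y}_{u,t}\preceq V$ is exactly what removes the $u$-dependence, and it is available only because all the $M_j$ and $Q_j$ are nonnegative. The remaining ingredients—boundedness of $r_\alpha$ on $\bb S_+^{d-1}$ and the dyadic summation—are routine.
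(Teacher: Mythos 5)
Your proof is correct and follows essentially the same route as the paper's: bound $r_\alpha$ and $1/(\alpha\sigma_\alpha\nu_\alpha(r_\alpha))$ by constants, note that $T_u^{-1}(\bb D_{a_1,a_2})$ is a log-annulus of width $\sigma_\alpha\sqrt{\upzeta_u}\,(a_2-a_1)$, and control the $\alpha$-weighted mass there through Kesten's tail bound $\bb P(|V|>w)\le c\,w^{-\alpha}$ from \eqref{Exit_tail_V_n-intro} combined with the monotone domination of $|\mathcal{Y}_{u,t}|$ by $|V|$. The only differences are cosmetic: you sum over unit log-intervals where the paper uses a layer-cake (Fubini) split into the two terms $E_1+E_2$, and you invoke the domination only for upper-tail events $\{|\mathcal{Y}_{u,t}|\ge w\}$, which is in fact slightly cleaner than the paper's wholesale replacement of $F_{u,t}$ by the law of $V$ on the restricted set.
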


\begin{proof}
We denote by $F$ the probability distribution function of the vector 
$V$ on $\bb R^d_+$, i.e. $F(dv) = \bb P (V \in dv)$.  
In view of \eqref{def-Hu-B}, 
since the eigenfunction $r_{\alpha}$ is uniformly bounded on $\bb S_+^{d-1}$,  
there exists a constant $c = c(\alpha) >0$ such that for any $t \in \bb R$ and $u \geq 0$, 
\begin{align}\label{Pf_H_u_D}
H_{u,t} (\bb D_{a_1, a_2}) 
 \leq   \frac{c}{ \sqrt{\upzeta_u} }  \int_{T_u^{-1} (\bb D_{a_1, a_2})}  e^{\alpha \log |v|} F_{u,t}(dv)  
 \leq   \frac{c}{ \sqrt{\upzeta_u} }  \int_{T_u^{-1} (\bb D_{a_1, a_2})}  e^{\alpha \log |v|} F(dv), 
\end{align}
where in the last inequality we used the fact that $F_{u,t}(B) \leq F(B)$ for any Borel measurable set $B \subseteq \bb R^d_+$. 
By the definition of $T_u$ (cf. \eqref{Def_Tu_aa}), we have 
\begin{align*}
T_u^{-1} (\bb D_{a_1, a_2}) 
= \left\{ v \in \bb R_+^d:  
     \log |v| \in \left[ D(u) + a_1 \sigma_{\alpha} \sqrt{\upzeta_u}, D(u) + a_2 \sigma_{\alpha} \sqrt{\upzeta_u} \right]  \right\}. 
\end{align*}
Since $F$ is the distribution function of the random variable $V$ on $\bb R_+^d$,  
it follows that 
\begin{align}\label{Pf_Identity_V}
\int_{T_u^{-1} (\bb D_{a_1, a_2})}  e^{\alpha \log |v|} F(dv) 
= \bb E  \bigg( |V|^{\alpha} 
    \mathds 1_{ \big\{ |V| \in \left[ e^{D(u) + a_1 \sigma_{\alpha} \sqrt{\upzeta_u} }, \, 
                        e^{D(u) + a_2 \sigma_{\alpha} \sqrt{\upzeta_u} } \right]  \big\} }  \bigg). 
\end{align}
Denote by $\bar{F}$ the distribution function of the random variable $|V|$ on $\bb R_+$. 
Using Fubini's theorem, we get 
\begin{align*}
 \int_{T_u^{-1} (\bb D_{a_1, a_2})}  e^{\alpha \log |v|} F(dv) 
& = \int_{0}^{\infty}  z^{\alpha}  \mathds 1_{ \big\{ z \in \big[ e^{D(u) + a_1 \sigma_{\alpha} \sqrt{\upzeta_u} }, \, 
                        e^{D(u) + a_2 \sigma_{\alpha} \sqrt{\upzeta_u} } \big]  \big\} }  \bar{F}(dz)   \notag\\
& = \int_{0}^{\infty}  \left(  \int_0^{\infty} \alpha w^{\alpha - 1} \mathds 1_{\{ z \geq w \}} dw  \right)
      \mathds 1_{ \big\{ z \in \big[ e^{D(u) + a_1 \sigma_{\alpha} \sqrt{\upzeta_u} }, \, 
                        e^{D(u) + a_2 \sigma_{\alpha} \sqrt{\upzeta_u} } \big]  \big\} }  \bar{F}(dz)   \notag\\
& = \int_0^{\infty} \alpha w^{\alpha - 1}  \bb P \left( |V| \geq w, \, 
               |V| \in \left[ e^{D(u) + a_1 \sigma_{\alpha} \sqrt{\upzeta_u} }, \, 
                        e^{D(u) + a_2 \sigma_{\alpha} \sqrt{\upzeta_u} } \right] \right)  dw   \notag\\
& = E_1 + E_2,  
\end{align*}
where 
\begin{align*}
 E_1  & = \int_0^{ e^{D(u) + a_1 \sigma_{\alpha} \sqrt{\upzeta_u} } }  \alpha w^{\alpha - 1}  dw
     \,  \bb P \left(  |V| \in \left[ e^{D(u) + a_1 \sigma_{\alpha} \sqrt{\upzeta_u} }, \, 
                        e^{D(u) + a_2 \sigma_{\alpha} \sqrt{\upzeta_u} } \right] \right),  \notag\\
 E_2  & =  \int_{ e^{D(u) + a_1 \sigma_{\alpha} \sqrt{\upzeta_u} } }^{ e^{D(u) + a_2 \sigma_{\alpha} \sqrt{\upzeta_u} } }  
           \alpha w^{\alpha - 1} 
     \,  \bb P \left(  |V| \in \left[ w, \, 
                        e^{D(u) + a_2 \sigma_{\alpha} \sqrt{\upzeta_u} } \right] \right)  dw. 
\end{align*}
By \eqref{Exit_tail_V_n-intro}, there exists a constant $c >0$ such that
$\bb P (|V| \geq w) \leq c w^{- \alpha}$ for any $w \in \bb R_+$, 
so
\begin{align}\label{Pf_Bound_E1}
E_1 \leq  e^{ \alpha (D(u) + a_1 \sigma_{\alpha} \sqrt{\upzeta_u}) } 
  \bb P \left(  |V| \geq   e^{D(u) + a_1 \sigma_{\alpha} \sqrt{\upzeta_u} }   \right)
  \leq  c.
\end{align}
Similarly, 
\begin{align}\label{Pf_Bound_E2}
E_2   \leq   \int_{ e^{D(u) + a_1 \sigma_{\alpha} \sqrt{\upzeta_u} } }^{ e^{D(u) + a_2 \sigma_{\alpha} \sqrt{\upzeta_u} } }  
           \alpha  w^{\alpha - 1} 
     \,  \bb P \left(  |V| \geq  w  \right)  dw    
  \leq  c   
    \int_{ e^{D(u) + a_1 \sigma_{\alpha} \sqrt{\upzeta_u} } }^{ e^{D(u) + a_2 \sigma_{\alpha} \sqrt{\upzeta_u} } }
    \frac{1}{w}  dw    
 \leq c (a_2 - a_1) \sqrt{\upzeta_u}. 
\end{align}
Substituting \eqref{Pf_Bound_E1} and \eqref{Pf_Bound_E2} into \eqref{Pf_H_u_D} 
concludes the proof of Lemma \ref{Lem_Huannuals_Bound}. 
\end{proof}

To obtain an exact asymptotic for $H_{u,t} (\bb D_{a_1, a_2})$, 
we need to study the asymptotic behavior of the joint law of $(\mathcal{X}_{u,t} \in \cdot, | \mathcal{Y}_{u,t} | \geq  e^t)$ 
with $t$ belonging to an interval which moves to infinity as $u \to \infty$. 
To this aim, 
we shall prove a stronger version of  Lemma \ref{Lem_CM_Weak_Conv},  
which plays a very important role in establishing Theorem \ref{Thm_CLT_Perpe}. 
We first apply \eqref{Exit_tail_V_n-intro} and Lemma \ref{Lem_tau_Low_c} 
to prove the following result which will be used in the proof of Lemma \ref{Lem_V_weak_Uni_bb}. 

\begin{lemma}\label{Lem_V_weak_Uni}
Assume \ref{Condi_ExpMom}, \ref{Condi_AP} and \ref{Condi_NonArith}. 
Let $\mathscr C >0$ be the constant given by \eqref{Exit_tail_V_n-intro}. 
Then, for any $t \in \bb R$, we have
\begin{align}\label{Uni_Convergen_V}
\lim_{u \to \infty}  \sup_{w \in J_u}
\left| e^{\alpha w} \bb P  \left( | \mathcal{Y}_{u,t} | \geq  e^w  \right) - \mathscr{C}  \right|
= 0,
\end{align}
where $\mathcal{Y}_{u,t}$ is defined by \eqref{Def_Yu} and 
\begin{align}\label{def-Ju}
J_u = \left[ D(u) + a_1 \sigma_{\alpha} \sqrt{\upzeta_u}, \,  D(u) + a_2 \sigma_{\alpha} \sqrt{\upzeta_u} \right]. 
\end{align}
\end{lemma}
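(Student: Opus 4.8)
The plan is to start from the distributional identity $\mathcal{Y}_{u,t}\overset{d}{=}V_{N_u}$, where $N_u:=\eta_{u,t}-\upzeta_u$; this is immediate from the i.i.d.\ assumption on $(M_n,Q_n)_{n\ge1}$ upon comparing \eqref{Def_Yu} with \eqref{Def_Yn01}. By \eqref{Def_L_rho_u} and \eqref{Def_D_u_I_u} one has $N_u=\rho D(u)+t\rho^{3/2}\sigma_\alpha\sqrt{\log u}+O(1)$, so $N_u\to\infty$; moreover every $w\in J_u$ obeys $w\ge D(u)+a_1\sigma_\alpha\sqrt{\upzeta_u}$ with $D(u)/\sqrt{\upzeta_u}\to\infty$, hence $\inf_{w\in J_u}w\to\infty$. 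Feeding the last fact into Kesten's tail asymptotic \eqref{Exit_tail_V_n-intro} yields, for each fixed $c>0$,
\[
 \sup_{w\in J_u}\bigl|\,e^{\alpha w}\,\bb P\bigl(|V|\ge c\,e^w\bigr)-\mathscr{C}\,c^{-\alpha}\,\bigr|\longrightarrow0\qquad(u\to\infty).
\]

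For the upper bound in \eqref{Uni_Convergen_V} I would use only monotonicity: since $V-V_{N_u}=\bar V_{N_u}\in\bb R_+^d$, we get $V_{N_u}\le V$ componentwise $\bb P$-a.s., so $|\mathcal{Y}_{u,t}|\le|V|$ in law and $e^{\alpha w}\bb P(|\mathcal{Y}_{u,t}|\ge e^w)\le e^{\alpha w}\bb P(|V|\ge e^w)$, which is $\mathscr{C}(1+o(1))$ uniformly over $w\in J_u$. For the lower bound, fix $\epsilon\in(0,1)$ and use that $|V|=|V_{N_u}|+|\bar V_{N_u}|$ exactly, since both summands lie in $\bb R_+^d$ and $|\cdot|$ is the norm from the Introduction; consequently $\{|V|\ge(1+\epsilon)e^w\}\subseteq\{|V_{N_u}|\ge e^w\}\cup\{|\bar V_{N_u}|\ge\epsilon e^w\}$, so
\[
 e^{\alpha w}\,\bb P\bigl(|\mathcal{Y}_{u,t}|\ge e^w\bigr)\;\ge\;e^{\alpha w}\,\bb P\bigl(|V|\ge(1+\epsilon)e^w\bigr)-e^{\alpha w}\,\bb P\bigl(|\bar V_{N_u}|\ge\epsilon e^w\bigr).
\]
By the first display the first term tends to $\mathscr{C}(1+\epsilon)^{-\alpha}$ uniformly over $w\in J_u$, so the lemma reduces to showing $\sup_{w\in J_u}e^{\alpha w}\bb P(|\bar V_{N_u}|\ge\epsilon e^w)\to0$; granting this and then letting $\epsilon\downarrow0$ gives $\liminf_u\inf_{w\in J_u}e^{\alpha w}\bb P(|\mathcal{Y}_{u,t}|\ge e^w)\ge\mathscr{C}$, which with the upper bound proves \eqref{Uni_Convergen_V}.

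The heart of the matter is therefore the control of $\bar V_{N_u}$. Because $\bar V_n-\bar V_{n+1}=\Pi_nQ_{n+1}\in\bb R_+^d$, the sequence $|\bar V_n|$ is non-increasing, so $\{|\bar V_{N_u}|\ge\epsilon e^w\}=\{\bar\tau_{\epsilon e^w}\ge N_u\}$ with $\bar\tau$ as in \eqref{Def_bar_Vn}. I would invoke Lemma \ref{Lem_tau_Low_c} with $v=\epsilon e^w$ and with fixed $\delta>0$ and $b'>\max\{\rho,\,8(2\alpha+1+\delta)\rho^2\sigma_\alpha^2\}$, which gives $\bb P(\bar\tau_v\ge\rho\log v+b'\sqrt{(\log v)(\log\log v)})\le c(\log v)^{-\delta}v^{-\alpha}$ for $v>e$. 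The step requiring real care is to check that, for all $w\in J_u$ and all sufficiently large $u$,
\[
 N_u\;\ge\;\rho\log(\epsilon e^w)+b'\sqrt{\bigl(\log(\epsilon e^w)\bigr)\bigl(\log\log(\epsilon e^w)\bigr)}\,,
\]
which is a direct computation from \eqref{Def_L_rho_u}, \eqref{Def_D_u_I_u} and the definition \eqref{def-Ju} of $J_u$: one has $N_u=\rho D(u)+t\rho^{3/2}\sigma_\alpha\sqrt{\log u}+O(1)$, for $w\in J_u$ one bounds $\rho\log(\epsilon e^w)\le\rho D(u)+\rho a_2\sigma_\alpha\sqrt{\upzeta_u}+O(1)$ with $\rho\sigma_\alpha\sqrt{\upzeta_u}\le\rho^{3/2}\sigma_\alpha\sqrt{\log u}$, and the correction $b'\sqrt{(\log v)(\log\log v)}=O\bigl((\log u)^{1/4}(\log\log u)^{3/4}\bigr)$ is of lower order than $\sqrt{\log u}$; this is precisely where the polynomial-in-$\log u$ scales built into $\upzeta_u$, $\eta_{u,t}$ and $J_u$, together with the available freedom in the constants, are exploited. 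With the displayed inequality in hand, for $w\in J_u$,
\[
 e^{\alpha w}\,\bb P\bigl(|\bar V_{N_u}|\ge\epsilon e^w\bigr)\;\le\;e^{\alpha w}\,c\bigl(\log(\epsilon e^w)\bigr)^{-\delta}(\epsilon e^w)^{-\alpha}\;=\;c\,\epsilon^{-\alpha}(w+\log\epsilon)^{-\delta}\longrightarrow0
\]
uniformly over $w\in J_u$, since $\inf_{w\in J_u}w\to\infty$. This closes the gap. I expect the main obstacle to be exactly this uniform estimate: every bound used must be quantitative in $w$, and verifying the inequality comparing $N_u$ with $\rho\log(\epsilon e^w)$ over the entire window $J_u$ is the delicate point.
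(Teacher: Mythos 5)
Your proposal follows the paper's own proof essentially step by step: the identification $\mathcal{Y}_{u,t}\overset{d}{=}V_{N_u}$ with $N_u=\eta_{u,t}-\upzeta_u$ (the paper's $m_{u,t}$ of \eqref{Def_mu_001}), the upper bound via $|\mathcal{Y}_{u,t}|\le |V|$ in law together with Kesten's asymptotic \eqref{Exit_tail_V_n-intro}, the lower bound via the inclusion $\{|V|\ge(1+\epsilon)e^w\}\subseteq\{|V_{N_u}|\ge e^w\}\cup\{|\bar V_{N_u}|\ge\epsilon e^w\}$, the reduction of the error term to Lemma \ref{Lem_tau_Low_c} through the monotonicity identity relating $\{|\bar V_{N_u}|>v\}$ to $\{\bar\tau_v\ge N_u\}$, and finally $\epsilon\downarrow 0$. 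So the route is the same as the paper's.

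The one step you single out as delicate is, however, not actually closed by the computation you sketch, and it is exactly the point at which the paper itself is terse. The inequality $N_u\ge \rho\log(\epsilon e^w)+b'\sqrt{(\log(\epsilon e^w))\log\log(\epsilon e^w)}$ is binding at the top of the window, $w=D(u)+a_2\sigma_\alpha\sqrt{\upzeta_u}$. There the term $\rho D(u)$ cancels exactly against the leading part of $N_u$ (the constant $b$ of \eqref{Def_L_rho_u} enters both through $L_\rho(u)\log u=\rho D(u)+O(1)$, so there is no ``freedom in the constants'' left to exploit), and what remains on the left is $(t-a_2)\rho^{3/2}\sigma_\alpha\sqrt{\log u}+O(\sqrt{\log\log u})$, to be compared with a margin of order $(\log u)^{1/4}(\log\log u)^{3/4}$ on the right. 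This holds for all large $u$ only when $t>a_2$; for $t\le a_2$ it fails, and indeed for $w$ in the upper part of $J_u$ one expects the truncation of $\mathcal{Y}_{u,t}$ at $N_u$ terms to bite before the typical passage time $\rho w$, so that $e^{\alpha w}\bb P(|\mathcal{Y}_{u,t}|\ge e^w)$ need not stay close to $\mathscr{C}$ there. Your write-up asserts the inequality for every $t\in\bb R$ and the fixed endpoints $a_1<a_2$ of \eqref{def-Ju} without ever comparing $t$ with $a_2$; the paper does the same (it reduces to the identical inequality at $w=D(u)+a_2\sigma_\alpha\sqrt{\upzeta_u}$ and declares it clear ``by the fact that $t>0$''). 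So you have faithfully reproduced the paper's argument, but you inherit rather than resolve its one delicate point: a fully rigorous version must either record and propagate a restriction of the type $a_2<t$ (adjusting the subsequent applications, where the window endpoint is taken as $t+\Delta$, e.g.\ by approximating in $a_2$ from below and treating the top slice separately) or control $\bb P(|\bar V_{N_u}|\ge\epsilon e^w)$ near the upper end of $J_u$ by an estimate not based on Lemma \ref{Lem_tau_Low_c} alone.
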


\begin{proof}
Set $m_{u,t} =  \eta_{u,t} - \upzeta_u$, 
where $\upzeta_u$ and $\eta_{u,t}$ are given in \eqref{Def_L_rho_u}.  
Then 
\begin{align}\label{Def_mu_001}
m_{u,t} 
 = \floor{ \rho \log u + t \rho^{3/2} \sigma_{\alpha} \sqrt{\log u} }
    - \floor{  (\rho - L_{\rho} (u)) \log u }   
 = L_{\rho} (u) \log u + t \rho^{3/2} \sigma_{\alpha} \sqrt{\log u} + \delta_u,
\end{align}
where $|\delta_u| \leq 1$ and $L_{\rho} (u)$ is defined by \eqref{Def_L_rho_u}.  
Notice that for any $\ee>0$ and $t \in \bb R$, 
it holds that $\bb P  \left( |\bar{V}_{m_{u,t}}| > \ee e^w \right) = \bb P \left( \bar{\tau}_{\ee e^w}  \geq  m_{u,t}  \right)$, 
where $\bar{V}_n$ and $\bar{\tau}_{u}$ are defined in \eqref{Def_bar_Vn}.
We shall first prove the following inequality:
for any $\delta > 0$, 
there exists a constant $c > 0$ such that for any $\ee>0$ and $w \in J_u$ with $w > - \log \ee + 1$, 
\begin{align}\label{Inequa_bar_Vn}
e^{\alpha w} \bb P \left( | \bar{V}_{m_{u,t}} | > \ee e^w  \right) 
\leq  c \ee^{-\alpha} \left( w + \log \ee \right)^{-\delta}. 
\end{align}
Using Lemma \ref{Lem_tau_Low_c} with $u = \ee e^w$, we get
that for any $\delta >0$, there exist constants $b, c > 0$ such that for any $\ee>0$ and $w > - \log \ee + 1$, 
\begin{align*}
e^{\alpha w} \bb P \left( \bar{\tau}_{\ee e^w}  
  \geq  \rho \left( w + \log \ee \right) + b \sqrt{ \left( w + \log \ee \right) \log \left( w + \log \ee \right)} \right)
\leq  c \ee^{-\alpha} \left( w + \log \ee \right)^{-\delta}. 
\end{align*}
Hence, to prove \eqref{Inequa_bar_Vn}, it suffices to show that
\begin{align*}
m_{u,t} \geq  \rho \left( w + \log \ee \right) + b \sqrt{ \left( w + \log \ee \right) \log \left( w + \log \ee \right)}
   \quad  \mbox{with} \  w =  D(u) + a_2 \sigma_{\alpha} \sqrt{\upzeta_u}. 
\end{align*}
This is clear by \eqref{Def_mu_001},  \eqref{Def_L_rho_u}, \eqref{Def_D_u_I_u} and the fact that $t >0$. 
Therefore, \eqref{Inequa_bar_Vn} holds.

We next apply \eqref{Inequa_bar_Vn} to prove \eqref{Uni_Convergen_V}. 
For the upper bound, from \eqref{Exit_tail_V_n-intro} we have
\begin{align*}
\lim_{u \to \infty}  \sup_{w \in J_u}
\left| e^{\alpha w} \bb P  \left( |V| \geq  e^w  \right) - \mathscr{C}  \right|
= 0. 
\end{align*}
Since $\bb P  \left( | \mathcal{Y}_{u,t} | \geq  e^w  \right) \leq \bb P  \left( |V| \geq  e^w  \right),$
it follows that for any $t \in \bb R$, 
\begin{align}\label{limsup-Yu-C}
\limsup_{u \to \infty}  \sup_{w \in J_u}
  e^{\alpha w} \bb P  \left( | \mathcal{Y}_{u,t} | \geq  e^w  \right) \leq \mathscr{C}. 
\end{align}
For the lower bound, since for any $\ee >0$, 
\begin{align*}
\left\{  |V| \geq (1 + \ee) e^w   \right\}  \subseteq
\left\{  |\bar{V}_{ m_{u,t} }| \geq  \ee e^w   \right\} \bigcup
\left\{  |V - \bar{V}_{ m_{u,t} }| \geq  e^w   \right\}, 
\end{align*}
using the fact that $\mathcal{Y}_{u,t}$ has the same distribution as that of $V - \bar{V}_{ m_{u,t} }$, we get
\begin{align*}
\bb P  \left( | \mathcal{Y}_{u,t} | \geq  e^w  \right)
= \bb P  \left( |V - \bar{V}_{ m_{u,t} } | \geq  e^w  \right)
\geq  \bb P  \left( |V| \geq (1 + \ee) e^w  \right) -  \bb P  \left( |\bar{V}_{ m_{u,t} }| \geq  \ee e^w  \right). 
\end{align*}
By \eqref{Exit_tail_V_n-intro}, we have that for any $\ee>0$, 
\begin{align*}
\liminf_{u \to \infty}  \inf_{w \in J_u}
 e^{\alpha w} \bb P  \left( |V| \geq  (1 + \ee)  e^w  \right) \geq  (1+\ee)^{-\alpha} \mathscr{C}. 
\end{align*}
Using \eqref{Inequa_bar_Vn},  for any $\ee>0$,  
we get $\lim_{u \to \infty}  \sup_{w \in J_u}  e^{\alpha w}  \bb P  \left( |\bar{V}_{ m_{u,t} }| \geq  \ee e^w  \right) = 0.$
Therefore, for any $\ee>0$, 
\begin{align}\label{liminf-Yu-C}
\liminf_{u \to \infty}  \inf_{w \in J_u}
  e^{\alpha w} \bb P  \left( | \mathcal{Y}_{u,t} | \geq  e^w  \right) \geq  (1+\ee)^{-\alpha} \mathscr{C}. 
\end{align}
Since $\ee>0$ can be arbitrary small, 
combining \eqref{limsup-Yu-C} and \eqref{liminf-Yu-C} concludes the proof of \eqref{Uni_Convergen_V}. 
\end{proof}

Using Lemmas \ref{Lem_V_weak_Uni} and \ref{Lem_CM_Weak_Conv}, 
now we further study the joint law of $(\mathcal{X}_{u,t}, \mathcal{Y}_{u,t})$. 

\begin{lemma}\label{Lem_V_weak_Uni_bb}
Assume \ref{Condi_ExpMom}, \ref{Condi_AP} and \ref{Condi_NonArith}. 
Let $\mathscr C >0$ be the constant given by \eqref{Exit_tail_V_n-intro}. 
Then, for any $t \in \bb R$, we have
\begin{align}\label{Uni_Convergen_V_bb}
\lim_{u \to \infty} 
\sup_{w \in J_u}
\left| e^{\alpha w}
\bb E  \Big[ r_{\alpha} \left( \mathcal{X}_{u,t} \right)  \mathds 1_{ \{ | \mathcal{Y}_{u,t} | \geq  e^w  \} }  \Big]
- \mathscr{C} \nu_{\alpha}(r_{\alpha})  \right|
= 0,
\end{align}
where $\mathcal{Y}_{u,t}$ is defined by \eqref{Def_Yu}, $\mathcal{X}_{u,t} = \mathcal{Y}_{u,t} / | \mathcal{Y}_{u,t} |$ and 
$J_u$ is defined by \eqref{def-Ju}. 
\end{lemma}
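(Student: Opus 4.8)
The plan is to mimic the structure of the proof of Lemma~\ref{Lem_V_weak_Uni}, replacing the tail probability $\bb P(|\mathcal Y_{u,t}|\geq e^w)$ by the eigenfunction-weighted quantity $\bb E[r_\alpha(\mathcal X_{u,t})\mathds 1_{\{|\mathcal Y_{u,t}|\geq e^w\}}]$, and to use Lemma~\ref{Lem_CM_Weak_Conv} — specifically the refined asymptotic \eqref{Weak_Conver_V} — in place of Kesten's bare asymptotic \eqref{Exit_tail_V_n-intro}. The key point is that $\mathcal Y_{u,t}$ has the same law as $V-\bar V_{m_{u,t}}$ with $m_{u,t}=\eta_{u,t}-\upzeta_u$ as in \eqref{Def_mu_001}, so $\mathcal Y_{u,t}$ is a truncation of the full perpetuity $V$, and $\bar V_{m_{u,t}}$ is a negligible remainder whose contribution is controlled by Lemma~\ref{Lem_tau_Low_c}, exactly as in the inequality \eqref{Inequa_bar_Vn} already established.

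First I would record the upper bound. Since $r_\alpha$ is bounded on $\bb S_+^{d-1}$ and $\mathcal Y_{u,t}$ is stochastically dominated by $V$ in the sense that $\{|\mathcal Y_{u,t}|\geq e^w\}$ can be compared to $\{|V|\geq e^w\}$ after controlling the remainder, one writes
\begin{align*}
\bb E\big[ r_\alpha(\mathcal X_{u,t})\mathds 1_{\{|\mathcal Y_{u,t}|\geq e^w\}}\big]
\leq \bb E\big[ r_\alpha(\widetilde{\mathcal Y}_{u,t})\mathds 1_{\{|\mathcal Y_{u,t}|\geq e^w\}}\big],
\end{align*}
and then uses the decomposition $V = \mathcal Y_{u,t}^{(d)} + \bar V_{m_{u,t}}$ (equality in law of $\mathcal Y_{u,t}$ with $V-\bar V_{m_{u,t}}$), the fact that $r_\alpha$ is Lipschitz on $\bb S_+^{d-1}$ (cf.\ \cite[Proposition~4.13]{BDGM14}), and \eqref{Inequa_bar_Vn} to replace $r_\alpha(\mathcal X_{u,t})$ by $r_\alpha(V/|V|)$ up to a vanishing error, uniformly in $w\in J_u$. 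Invoking \eqref{Weak_Conver_V} of Lemma~\ref{Lem_CM_Weak_Conv} then gives $\limsup_{u\to\infty}\sup_{w\in J_u} e^{\alpha w}\bb E[r_\alpha(\mathcal X_{u,t})\mathds 1_{\{|\mathcal Y_{u,t}|\geq e^w\}}] \leq \mathscr C\,\nu_\alpha(r_\alpha)$; one has to check that \eqref{Weak_Conver_V}, which is stated as a limit at a fixed threshold $t$, upgrades to a limit uniform over the moving window $J_u$, but this follows because $J_u$ consists of points tending to $+\infty$ and the convergence in \eqref{Weak_Conver_V} is monotone-type in $t$ (sandwich by two fixed thresholds). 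For the lower bound, I would use the inclusion $\{|V|\geq(1+\ee)e^w\}\subseteq\{|\bar V_{m_{u,t}}|\geq \ee e^w\}\cup\{|V-\bar V_{m_{u,t}}|\geq e^w\}$ together with the direction comparison: on the event $\{|\bar V_{m_{u,t}}|\leq \ee e^w,\ |V|\geq(1+\ee)e^w\}$, the directions $V/|V|$ and $\mathcal Y_{u,t}/|\mathcal Y_{u,t}|$ are within $O(\ee)$ of each other, so by Lipschitz continuity of $r_\alpha$ and its strict positivity,
\begin{align*}
\bb E\big[ r_\alpha(\mathcal X_{u,t})\mathds 1_{\{|\mathcal Y_{u,t}|\geq e^w\}}\big]
\geq (1-C\ee)\,\bb E\big[ r_\alpha(V/|V|)\mathds 1_{\{|V|\geq(1+\ee)e^w\}}\big] - \|r_\alpha\|_\infty\,\bb P(|\bar V_{m_{u,t}}|\geq \ee e^w),
\end{align*}
and again \eqref{Weak_Conver_V} and \eqref{Inequa_bar_Vn} yield $\liminf_{u\to\infty}\inf_{w\in J_u} e^{\alpha w}\bb E[\cdots]\geq (1+\ee)^{-\alpha}(1-C\ee)\mathscr C\,\nu_\alpha(r_\alpha)$. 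Letting $\ee\downarrow 0$ finishes the argument.

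The main obstacle I anticipate is the handling of the \emph{direction} $\mathcal X_{u,t}$ rather than just the norm: unlike in Lemma~\ref{Lem_V_weak_Uni}, one cannot simply bound things by monotonicity, since $r_\alpha$ evaluated at the direction of a truncated sum need not compare to $r_\alpha$ of the direction of the full sum. Making precise the statement ``$|\bar V_{m_{u,t}}|$ small implies $\mathcal X_{u,t}$ close to $V/|V|$'' requires a quantitative bound: from $V=\mathcal Y + \bar V$ with $|\bar V|\leq\ee|\mathcal Y|/(1-\ee)$ one gets $|\,V/|V| - \mathcal Y/|\mathcal Y|\,|\leq 2\ee/(1-\ee)$ by elementary estimates in $\bb R_+^d$, and then the uniform (Lipschitz) continuity of $r_\alpha$ from \cite[Proposition~4.13]{BDGM14} converts this into the required error bound — but one must be careful that this holds uniformly in $w\in J_u$, which is where the uniform remainder estimate \eqref{Inequa_bar_Vn} (valid for $w>-\log\ee+1$, hence for all $w\in J_u$ once $u$ is large) is essential.
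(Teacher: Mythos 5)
Your proposal is correct, but it is organized differently from the paper's proof. You run the $\ee$-sandwich of Lemma \ref{Lem_V_weak_Uni} directly on the weighted quantity: realize $\mathcal Y_{u,t}$ jointly with the full perpetuity through $V=\mathcal Y_{u,t}+\bar V_{m_{u,t}}$ (equality in law), split on the event $\{|\bar V_{m_{u,t}}|\le \ee e^w\}$, use the quantitative direction bound $\left|\mathcal X_{u,t}-V/|V|\right|\le 2\ee$ together with the continuity of $r_\alpha$ on the good event, bound the bad event by $\|r_\alpha\|_\infty\,\bb P(|\bar V_{m_{u,t}}|>\ee e^w)=o(e^{-\alpha w})$ via \eqref{Inequa_bar_Vn}, and feed the main term into \eqref{Weak_Conver_V}, whose uniformity over $J_u$ is automatic because $\inf J_u\to\infty$ (no monotonicity or two-threshold sandwich is needed). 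The paper instead estimates the difference $e^{\alpha w}\big|\bb E[r_\alpha(\mathcal X)\mathds 1_{\{|\mathcal Y|\ge e^w\}}]-\bb E[r_\alpha(\mathcal X_{u,t})\mathds 1_{\{|\mathcal Y_{u,t}|\ge e^w\}}]\big|$ with $\mathcal Y=V$: an indicator-difference term is controlled by \eqref{Exit_tail_V_n-intro} and Lemma \ref{Lem_V_weak_Uni}, while the direction term is handled with a growing cut-off $e^{aw}$, H\"older's inequality and a truncated-moment computation for $|\bar V_{m_{u,t}}|$ via integration by parts. Your splitting at the fixed relative level $\ee e^w$ avoids that H\"older-inequality/moment step entirely and is arguably more elementary (indeed, setting $r_\alpha\equiv 1$ it reduces to the paper's proof of Lemma \ref{Lem_V_weak_Uni}, which you then do not need as a separate input — only \eqref{Inequa_bar_Vn}, \eqref{Exit_tail_V_n-intro} and \eqref{Weak_Conver_V}), at the price of a final $\ee\downarrow 0$ limit. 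Two cosmetic points: your displayed ``upper bound'' comparing $r_\alpha(\mathcal X_{u,t})$ with $r_\alpha(\widetilde{\mathcal Y}_{u,t})$ is vacuous since $\widetilde{\mathcal Y}_{u,t}=\mathcal X_{u,t}$ (the surrounding prose makes the intended replacement by $r_\alpha(V/|V|)$ clear), and $r_\alpha$ is in general only H\"older continuous of some exponent $\gamma>0$ (as used in the paper), not Lipschitz; this merely turns your $O(\ee)$ errors into $O(\ee^{\gamma})$ and does not affect the conclusion.
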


\begin{proof}
Denote $\mathcal{Y} = V$ and $\mathcal{X} = \mathcal{Y} / |\mathcal{Y}|$. 
From \eqref{Weak_Conver_V} we see that 
\begin{align*}
\lim_{u \to \infty} 
\sup_{w \in J_u}
\left| e^{\alpha w}
\bb E  \left[ r_{\alpha} \left( \mathcal{X} \right)  \mathds 1_{ \{ |\mathcal{Y}| \geq  e^w  \} }  \right]
- \mathscr{C} \nu_{\alpha}(r_{\alpha})  \right|
= 0. 
\end{align*}
Therefore, in order to establish \eqref{Uni_Convergen_V_bb}, it suffices to prove that for any $t \in \bb R$, 
\begin{align}\label{Pf_Equivalent_Form_Yu}
\lim_{u \to \infty} 
\sup_{w \in J_u}  e^{\alpha w}
\left| \bb E  \left[ r_{\alpha} \left( \mathcal{X} \right)  \mathds 1_{ \{ |\mathcal{Y}| \geq  e^w  \} }  \right]
  - \bb E  \left[ r_{\alpha} \left( \mathcal{X}_{u,t} \right)  \mathds 1_{ \{ | \mathcal{Y}_{u,t} | \geq  e^w  \} }  \right]  
   \right|
= 0. 
\end{align}
By the triangular inequality, we have
\begin{align*}
e^{\alpha w} \left| 
\bb E  \left[ r_{\alpha} \left( \mathcal{X} \right)  \mathds 1_{ \{ |\mathcal{Y}| \geq  e^w  \} }  \right]
 - \bb E  \left[ r_{\alpha} \left( \mathcal{X}_{u,t} \right)  \mathds 1_{ \{ | \mathcal{Y}_{u,t} | \geq  e^w  \} }  \right] 
   \right|
\leq  I_1(t,u,w) + I_2(t,u,w),
\end{align*}
where 
\begin{align*}
&  I_1(t,u,w) = e^{\alpha w} \left| \bb E  \left[ r_{\alpha} \left( \mathcal{X} \right)  
   \left( \mathds 1_{ \{ |\mathcal{Y}| \geq  e^w  \} } 
       - \mathds 1_{ \{ | \mathcal{Y}_{u,t} | \geq  e^w  \} } \right)  \right]  \right|,  \nonumber\\
&  I_2(t,u,w) = e^{\alpha w}
    \left| \bb E  \left[ \left( r_{\alpha} \left( \mathcal{X}_{u,t} \right) - r_{\alpha} \left( \mathcal{X} \right) \right)
         \mathds 1_{ \{ | \mathcal{Y}_{u,t} | \geq  e^w  \} }  \right]   \right|. 
\end{align*}

\textit{Bound of $I_1(t,u,w)$.}
Since the eigenfunction $r_{\alpha}$ is positive and bounded on $\bb S_+^{d-1}$, 
and that $\{ |\mathcal{Y}_{u,t} | \geq  e^w \} \subset \{ |\mathcal{Y}| \geq  e^w \}$,
there exists a constant $c>0$ such that
\begin{align*}
I_1(t,u,w) \leq c  e^{\alpha w} 
   \left[ \bb P \left( |\mathcal{Y}| \geq  e^w \right) - \bb P \left( | \mathcal{Y}_{u,t} | \geq  e^w \right)  \right]. 
\end{align*}
From \eqref{Exit_tail_V_n-intro}, we get 
\begin{align*}
\lim_{u \to \infty}  \sup_{w \in J_u}
\left| e^{\alpha w} \bb P  \left( |\mathcal{Y}| \geq  e^w  \right) - \mathscr{C}  \right|
= 0. 
\end{align*}
By Lemma \ref{Lem_V_weak_Uni}, we have that for any $t \in \bb R$, 
\begin{align*}
\lim_{u \to \infty}  \sup_{w \in J_u}
\left| e^{\alpha w} \bb P  \left( | \mathcal{Y}_{u,t} | \geq  e^w  \right) - \mathscr{C}  \right|
= 0. 
\end{align*}
Therefore, for any $t \in \bb R$, 
\begin{align}\label{Bound_I1_tu}
\lim_{u \to \infty}  \sup_{w \in J_u}  I_1(t,u,w) = 0. 
\end{align}

\textit{Bound of $I_2(t,u,w)$.}
Since the eigenfunction $r_{\alpha}$ is $\ee$-H\"{o}lder continuous on $\bb S_+^{d-1}$, 
there exists a constant $c_{\ee} >0$ such that 
\begin{align*}
I_2(t,u,w) \leq   c_{\ee} e^{\alpha w} \bb E  \left( \left| \mathcal{X}_{u,t} - \mathcal{X}  \right|^{\ee}
         \mathds 1_{ \{ |\mathcal{Y}_{u,t}| \geq  e^w  \} }  \right). 
\end{align*}
Notice that 
\begin{align*}
\left| \mathcal{X}_{u,t} - \mathcal{X}  \right|
& = \left| \frac{\mathcal{Y}_{u,t} }{|\mathcal{Y}_{u,t} |} -  \frac{\mathcal{Y}}{|\mathcal{Y}|} \right|
  = \left| \frac{\mathcal{Y}_{u,t} |\mathcal{Y} - \mathcal{Y} |\mathcal{Y}| 
        + \mathcal{Y} |\mathcal{Y}| - \mathcal{Y} |\mathcal{Y}_{u,t}| |}{| \mathcal{Y}_{u,t} | |\mathcal{Y}|}  \right|  \nonumber\\
& \leq  \frac{| \mathcal{Y}_{u,t} - \mathcal{Y} |}{|\mathcal{Y}_{u,t}|} 
      + \frac{| |\mathcal{Y}| - |\mathcal{Y}_{u,t}| |}{|\mathcal{Y}_{u,t} |}   
  \leq 2 \frac{| \mathcal{Y}_{u,t} - \mathcal{Y} |}{|\mathcal{Y}_{u,t}|}. 
\end{align*}
It follows that  
\begin{align*}
I_2(t,u,w) & \leq   c_{\ee} e^{\alpha w}  \bb E  \left( \frac{| \mathcal{Y}_{u,t} - \mathcal{Y} |^{\ee}}{| \mathcal{Y}_{u,t} |^{\ee}}
         \mathds 1_{ \{ | \mathcal{Y}_{u,t} | \geq  e^w  \} }  \right)  \nonumber\\
& \leq  c_{\ee}  e^{\alpha w - \ee w}  \bb E  \left( | \mathcal{Y}_{u,t} - \mathcal{Y} |^{\ee}
         \mathds 1_{ \{ | \mathcal{Y}_{u,t} | \geq  e^w  \} }  \right)    \nonumber\\
& = I_{21}(t,u,w) + I_{22}(t,u,w),
\end{align*}
where, with $a \in (0,1)$ a fixed constant, 
\begin{align*}
& I_{21}(t,u,w) = c_{\ee}  e^{\alpha w - \ee w}  \bb E  \left( | \mathcal{Y}_{u,t} - \mathcal{Y} |^{\ee}
         \mathds 1_{ \{ | \mathcal{Y}_{u,t} - \mathcal{Y} | \leq  e^{aw}  \} }
         \mathds 1_{ \{ | \mathcal{Y}_{u,t} | \geq  e^w  \} }  \right),   \nonumber\\
& I_{22}(t,u,w) = c_{\ee}  e^{\alpha w - \ee w}  \bb E  \left( | \mathcal{Y}_{u,t} - \mathcal{Y} |^{\ee}
         \mathds 1_{ \{ | \mathcal{Y}_{u,t} - \mathcal{Y} | >  e^{aw}  \} }
         \mathds 1_{ \{ | \mathcal{Y}_{u,t} | \geq  e^w  \} }  \right). 
\end{align*}

\textit{Bound of $I_{21}(t,u,w)$.}
By lemma \ref{Lem_V_weak_Uni}, we get that, as $u$ sufficiently large,
uniformly in $w \in J_u$, 
\begin{align*}
I_{21}(t,u,w) \leq  c_{\ee}  e^{\alpha w - \ee w + a \ee w} \bb P \left( | \mathcal{Y}_{u,t} | \geq  e^w  \right) 
\leq e^{- (1 -a) \ee w}. 
\end{align*}
Since $a \in (0,1)$, this gives that for any $t \in \bb R$, 
\begin{align}\label{Bound_I21_tu}
\lim_{u \to \infty}  \sup_{w \in J_u}  I_{21}(t,u,w) = 0. 
\end{align}

\textit{Bound of $I_{22}(t,u,w)$.}
By H\"{o}lder's inequality, we have that for any $p, p' > 1$ with $1/p + 1/p' = 1$, 
\begin{align*}
I_{22}(t,u,w) 
\leq  c_{\ee}  e^{\alpha w - \ee w}  
        \left[ \bb E  \left( | \mathcal{Y}_{u,t} - \mathcal{Y} |^{p \ee}
         \mathds 1_{ \{ | \mathcal{Y}_{u,t} - \mathcal{Y} | >  e^{aw}  \} }   \right)
         \right]^{1/p}
        \left[ \bb P \left(  | \mathcal{Y}_{u,t} | \geq  e^w   \right)   \right]^{1/p'}. 
\end{align*}
Notice that the distribution of the random variable $| \mathcal{Y}_{u,t} - \mathcal{Y} |$ coincides with that of $|\bar{V}_{m_{u,t}}|$,
where $\bar{V}_{m_{u,t}}$ is the same as in \eqref{Inequa_bar_Vn}. 
Hence, by Lemma \ref{Lem_V_weak_Uni} and the fact that $1 - 1/p' = 1/p$, it follows that uniformly in $w \in J_u$,  
\begin{align*}
I_{22}(t,u,w)  \leq  c_{\ee}  e^{\frac{\alpha}{p} w - \ee w}  
        \left[ \bb E  \left( | \bar{V}_{m_{u,t}} |^{p \ee}
         \mathds 1_{ \{ | \bar{V}_{m_{u,t}} | >  e^{aw}  \} }   \right)
         \right]^{1/p}.  
\end{align*}
Using integration by parts and the inequality \eqref{Inequa_bar_Vn}, 
we deduce that there exist constants $c > 0$ and $\delta >1$ such that uniformly in $w \in J_u$,  
\begin{align*}
 \bb E  \left( | \bar{V}_{m_{u,t}} |^{p \ee}
         \mathds 1_{ \{ | \bar{V}_{m_{u,t}} | >  e^{aw}  \} }   \right)  
& = e^{p\ee aw} \bb P \left( | \bar{V}_{m_{u,t}} | >  e^{aw} \right)
  + \int_{e^{aw}}^{\infty} \bb P \left( | \bar{V}_{m_{u,t}} | >  x \right) x^{p \ee - 1} dx  \nonumber\\
& \leq  c a^{-\delta} w^{-\delta}  e^{p\ee aw - \alpha a w} 
   +  c  \int_{e^{aw}}^{\infty} \frac{1}{x^{\alpha - p \ee + 1} \log^{\delta} x}  dx  \nonumber\\
& = c a^{-\delta} w^{-\delta} + c  \int_{e^{aw}}^{\infty} \frac{1}{x \log^{\delta} x}  dx,
\end{align*}
where in the last line we chose $\ee \in (0,1)$ and $p > 1$ such that $p \ee = \alpha$. 
With this choice, it holds that $e^{\frac{\alpha}{p} w - \ee w} = 1$. 
Consequently, using the inequality $(a + b)^{1/p} \leq a^{1/p} + b^{1/p}$ for $a, b \geq 0$ and $p > 1$, 
we obtain that uniformly in $w \in J_u$,   
\begin{align*}
I_{22}(t,u,w)  \leq  c_{\ee, p, \delta, a} w^{-\frac{\delta}{p}}
   + c_{\ee, p} \left( \int_{e^{aw}}^{\infty} \frac{1}{x \log^{\delta} x}  dx \right)^{1/p}.   
\end{align*}
Since $\delta > 1$, this yields that 
\begin{align}\label{Bound_I22_tu}
\lim_{u \to \infty}  \sup_{w \in J_u}  I_{22}(t,u,w) = 0. 
\end{align}
Putting together \eqref{Bound_I1_tu}, \eqref{Bound_I21_tu} and \eqref{Bound_I22_tu}, 
we conclude the proof of \eqref{Pf_Equivalent_Form_Yu}. 
\end{proof}

Using Lemmas \ref{Lem_V_weak_Uni} and \ref{Lem_V_weak_Uni_bb}, now we can give an exact asymptotic for $H_{u,t} (\bb D_{a_1, a_2})$.
Recall that $\bb D_{a_1, a_2} = \{ v \in \bb R_+^d:  \log |v| \in [a_1, a_2] \}.$ 

\begin{lemma}\label{Lem_Weak_Conver_Hu}
Assume \ref{Condi_ExpMom}, \ref{Condi_AP} and \ref{Condi_NonArith}. 
Let $\mathscr C >0$ be the constant given by \eqref{Exit_tail_V_n-intro}.  
Let $- \infty < a_1 < a_2 < \infty$. 
Then, for any $t \in \bb R$,  we have 
\begin{align}\label{Limit_Hu_a}
\lim_{u \to \infty}  H_{u,t} (\bb D_{a_1, a_2}) =  \mathscr C (a_2 - a_1). 
\end{align}
More generally, for any $t \in \bb R$ 
and any continuous and compactly supported function $f$ on $\bb R$, 
we have
\begin{align}\label{Limit_Hu_b}
\lim_{u \to \infty} \int_{\bb D_{a_1, a_2}}  f(\log |v|) H_{u,t}(dv) = \mathscr C  \int_{a_1}^{a_2} f(s) ds.   
\end{align}
\end{lemma}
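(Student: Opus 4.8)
The plan is to reduce the statement to the uniform asymptotics of Lemma~\ref{Lem_V_weak_Uni_bb} by means of a layer‑cake decomposition. Put $w_1=D(u)+a_1\sigma_\alpha\sqrt{\upzeta_u}$ and $w_2=D(u)+a_2\sigma_\alpha\sqrt{\upzeta_u}$. By the definition \eqref{Def_Tu_aa} of $T_u$, the set $T_u^{-1}(\bb D_{a_1,a_2})$ equals $\{v\in\bb R_+^d:\log|v|\in[w_1,w_2]\}$, and $[w_1,w_2]=J_u$ (cf.\ \eqref{def-Ju}); hence, by \eqref{def-Hu-B},
\[
H_{u,t}(\bb D_{a_1,a_2})=\frac{1}{\alpha\sigma_\alpha\nu_\alpha(r_\alpha)\sqrt{\upzeta_u}}\,\bb E\Big[r_\alpha(\mathcal X_{u,t})\,|\mathcal Y_{u,t}|^{\alpha}\,\mathds 1_{\{\log|\mathcal Y_{u,t}|\in[w_1,w_2]\}}\Big].
\]
First I would record the elementary identity, valid for every $z\ge0$,
\[
z^{\alpha}\mathds 1_{\{\log z\in[w_1,w_2]\}}=\alpha\int_{w_1}^{w_2}e^{\alpha v}\mathds 1_{\{\log z\ge v\}}\,dv+e^{\alpha w_1}\mathds 1_{\{\log z\ge w_1\}}-e^{\alpha w_2}\mathds 1_{\{\log z> w_2\}},
\]
which follows by checking the three cases $\log z<w_1$, $\log z\in[w_1,w_2]$ and $\log z>w_2$. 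Multiplying by $r_\alpha(\mathcal X_{u,t})$, taking expectations and applying Tonelli's theorem yields
\[
\bb E\Big[r_\alpha(\mathcal X_{u,t})\,|\mathcal Y_{u,t}|^{\alpha}\,\mathds 1_{\{\log|\mathcal Y_{u,t}|\in[w_1,w_2]\}}\Big]=\alpha\int_{w_1}^{w_2}e^{\alpha v}\Psi_{u,t}(v)\,dv+R_{u,t},
\]
where $\Psi_{u,t}(v):=\bb E[r_\alpha(\mathcal X_{u,t})\mathds 1_{\{|\mathcal Y_{u,t}|\ge e^{v}\}}]$ and $R_{u,t}$ gathers the two boundary terms, each nonnegative and bounded by $e^{\alpha w_j}\Psi_{u,t}(w_j)$ with $w_j\in J_u$.

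Now Lemma~\ref{Lem_V_weak_Uni_bb} gives $\epsilon_u:=\sup_{v\in J_u}\big|e^{\alpha v}\Psi_{u,t}(v)-\mathscr C\nu_\alpha(r_\alpha)\big|\to0$, so in particular $R_{u,t}=O(1)$. Since $[w_1,w_2]=J_u$ and $w_2-w_1=(a_2-a_1)\sigma_\alpha\sqrt{\upzeta_u}$, this uniform bound gives
\[
\alpha\int_{w_1}^{w_2}e^{\alpha v}\Psi_{u,t}(v)\,dv=\alpha(w_2-w_1)\mathscr C\nu_\alpha(r_\alpha)+O\big((w_2-w_1)\epsilon_u\big)=\alpha(a_2-a_1)\sigma_\alpha\,\mathscr C\,\nu_\alpha(r_\alpha)\sqrt{\upzeta_u}+o(\sqrt{\upzeta_u}).
\]
As $\upzeta_u\to\infty$, the term $R_{u,t}=O(1)$ is negligible against this, and dividing by $\alpha\sigma_\alpha\nu_\alpha(r_\alpha)\sqrt{\upzeta_u}$ produces $H_{u,t}(\bb D_{a_1,a_2})\to\mathscr C(a_2-a_1)$, i.e.\ \eqref{Limit_Hu_a}.

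For \eqref{Limit_Hu_b} I would argue by step‑function approximation, using the $u$‑uniform bound of Lemma~\ref{Lem_Huannuals_Bound} to control the error. Given $\epsilon>0$, uniform continuity of $f$ on $[a_1,a_2]$ provides a partition $a_1=s_0<\dots<s_N=a_2$ and $g=\sum_{j=1}^N f(s_{j-1})\mathds 1_{[s_{j-1},s_j)}$ with $\sup_{[a_1,a_2]}|f-g|\le\epsilon$. Then $\big|\int_{\bb D_{a_1,a_2}}(f-g)(\log|v|)\,H_{u,t}(dv)\big|\le\epsilon\,H_{u,t}(\bb D_{a_1,a_2})\le\epsilon\big(c(a_2-a_1)+c/\sqrt{\upzeta_u}\big)$ by Lemma~\ref{Lem_Huannuals_Bound}, while $\int_{\bb D_{a_1,a_2}}g(\log|v|)\,H_{u,t}(dv)=\sum_j f(s_{j-1})H_{u,t}(\bb D_{s_{j-1},s_j})+o(1)$, the $o(1)$ accounting for the overlap of the closed sets $\bb D_{s_{j-1},s_j}$ on the level sets $\{\log|v|=s_j\}$, whose $H_{u,t}$‑mass is $\le c/\sqrt{\upzeta_u}$ again by Lemma~\ref{Lem_Huannuals_Bound}. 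By \eqref{Limit_Hu_a} the sum converges to $\sum_j f(s_{j-1})\mathscr C(s_j-s_{j-1})$, which lies within $\mathscr C\epsilon(a_2-a_1)$ of $\mathscr C\int_{a_1}^{a_2}f(s)\,ds$. Letting $u\to\infty$ and then $\epsilon\to0$ gives \eqref{Limit_Hu_b}.

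The genuinely substantive part — uniform control of the joint law of $(\mathcal X_{u,t},|\mathcal Y_{u,t}|)$ over the window $J_u$, which recedes to $+\infty$ — is already packaged in Lemma~\ref{Lem_V_weak_Uni_bb}, so the remaining difficulties are purely bookkeeping: keeping the layer‑cake decomposition exact so that the boundary terms are visibly $O(1)$ and therefore swamped by the $\sqrt{\upzeta_u}$‑order main term, and invoking the $u$‑independent constant in Lemma~\ref{Lem_Huannuals_Bound} so that the step‑function error in the passage from \eqref{Limit_Hu_a} to \eqref{Limit_Hu_b} is uniform in $u$. I do not expect difficulty beyond these.
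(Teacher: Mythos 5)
Your proof is correct and follows essentially the same route as the paper: both write $H_{u,t}(\bb D_{a_1,a_2})$ as an expectation over $T_u^{-1}(\bb D_{a_1,a_2})$, use a layer-cake/Fubini decomposition whose main term is $\alpha\int_{J_u} e^{\alpha w}\,\bb E\big[r_\alpha(\mathcal X_{u,t})\mathds 1_{\{|\mathcal Y_{u,t}|\ge e^w\}}\big]dw$, apply the uniform convergence of Lemma \ref{Lem_V_weak_Uni_bb} over $J_u$, and observe that the boundary contributions are $O(1)$, hence negligible after dividing by $\alpha\sigma_\alpha\nu_\alpha(r_\alpha)\sqrt{\upzeta_u}$ (the paper organizes these boundary pieces as $K_1$ and $K_{22}$ bounded via Kesten's tail estimate, while your exact identity starting at $w_1$ is just a reorganization of the same computation). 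The only addition is that you spell out the step-function approximation, with the $u$-uniform bound of Lemma \ref{Lem_Huannuals_Bound}, to pass from \eqref{Limit_Hu_a} to \eqref{Limit_Hu_b}, a step the paper states without proof; your argument there is also correct.
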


\begin{proof}
It suffices to prove \eqref{Limit_Hu_a} since \eqref{Limit_Hu_b} is a consequence of \eqref{Limit_Hu_a}. 
Recall that $\mathcal{Y}_{u,t}$ and $\mathcal{X}_{u,t}$ are defined in \eqref{Def_Yu}, 
and that $F_{u,t}$ is the probability distribution function of $\mathcal{Y}_{u,t}$ on $\bb R^d_+$. 
Similarly to \eqref{Pf_Identity_V}, we have
\begin{align*}
 H_{u,t}(\bb D_{a_1, a_2})  
 =  \frac{1}{\alpha \sigma_{\alpha}  \nu_{\alpha}(r_{\alpha}) \sqrt{\upzeta_u}}
     \int_{T_u^{-1} (\bb D_{a_1, a_2})}  r_{\alpha}(\tilde{v})  e^{\alpha \log |v|}  F_{u,t}(dv)  
 =  \frac{1}{\alpha \sigma_{\alpha}  \nu_{\alpha}(r_{\alpha}) \sqrt{\upzeta_u}} K, 
\end{align*}
where 
\begin{align*}
K = \bb E  \bigg[ |\mathcal{Y}_{u,t}|^{\alpha} 
      \mathds 1_{ \left\{ |\mathcal{Y}_{u,t}| \in \left[ e^{D(u) + a_1 \sigma_{\alpha} \sqrt{\upzeta_u} }, \, 
                        e^{D(u) + a_2 \sigma_{\alpha} \sqrt{\upzeta_u} } \right]  \right\} }  
                         r_{\alpha} \left( \mathcal{X}_{u,t} \right)  \bigg]. 
\end{align*}
For convenience, denote by $\tilde{F}_{u,t}$ the joint distribution of the couple $(|\mathcal{Y}_{u,t}|, \mathcal{X}_{u,t})$,  
namely, 
\begin{align*}
\tilde{F}_{u,t} (dz, dx) = \bb P \left(  |\mathcal{Y}_{u,t}| \in dz, \mathcal{X}_{u,t} \in dx \right). 
\end{align*}
Then it holds that 
\begin{align*}
K = \int_{\bb S_+^{d-1}} \int_0^{\infty} z^{\alpha} 
  \mathds 1_{ \left\{ z \in \left[ e^{D(u) + a_1 \sigma_{\alpha} \sqrt{\upzeta_u} }, \, 
                        e^{D(u) + a_2 \sigma_{\alpha} \sqrt{\upzeta_u} } \right]  \right\} }
      r_{\alpha}(x) \tilde{F}_{u,t} (dz, dx). 
\end{align*}
By Fubini's theorem, we get
\begin{align*}
K & = \int_{\bb S_+^{d-1}} \int_0^{\infty} 
     \left(  \int_0^{\infty} \alpha w^{\alpha - 1} \mathds 1_{\{ z \geq w \}} dw  \right)   
        \mathds 1_{ \left\{ z \in \left[ e^{D(u) + a_1 \sigma_{\alpha} \sqrt{\upzeta_u} }, \, 
                        e^{D(u) + a_2 \sigma_{\alpha} \sqrt{\upzeta_u} } \right]  \right\} }
        r_{\alpha}(x) \tilde{F}_{u,t} (dz, dx)   \nonumber\\
& = \alpha \int_0^{\infty}  w^{\alpha - 1} 
   \bigg[ \int_{\bb S_+^{d-1}} \int_0^{\infty} 
   \mathds 1_{\{ z \geq w \}}  \mathds 1_{ \left\{ z \in \left[ e^{D(u) + a_1 \sigma_{\alpha} \sqrt{\upzeta_u} }, \, 
                        e^{D(u) + a_2 \sigma_{\alpha} \sqrt{\upzeta_u} } \right]  \right\} }   
                           r_{\alpha}(x) \tilde{F}_{u,t} (dz, dx) \bigg] dw   \nonumber\\
& = \alpha \int_0^{\infty}  w^{\alpha - 1}
    \bb E  \left[ r_{\alpha} \left( \mathcal{X}_{u,t} \right)
          \mathds 1_{ \left\{ |\mathcal{Y}_{u,t}| \geq w, \,   |\mathcal{Y}_{u,t}| \in \left[ e^{D(u) + a_1 \sigma_{\alpha} \sqrt{\upzeta_u} }, \, 
                        e^{D(u) + a_2 \sigma_{\alpha} \sqrt{\upzeta_u} } \right]  \right\} }  
                           \right]  dw  \nonumber\\
& = K_1 + K_2,
\end{align*}
where 
\begin{align*}
& K_1 = \alpha \int_0^{ e^{D(u) + a_1 \sigma_{\alpha} \sqrt{\upzeta_u} } }  w^{\alpha - 1}  dw   
  \bb E  \left[ r_{\alpha} \left( \mathcal{X}_{u,t} \right)
          \mathds 1_{ \left\{ |\mathcal{Y}_{u,t}| \in \left[ e^{D(u) + a_1 \sigma_{\alpha} \sqrt{\upzeta_u} }, \, 
                        e^{D(u) + a_2 \sigma_{\alpha} \sqrt{\upzeta_u} } \right]  \right\} }  
                           \right],    \nonumber\\
& K_2 =  \alpha  \int_{ e^{D(u) + a_1 \sigma_{\alpha} \sqrt{\upzeta_u} } }^{ e^{D(u) + a_2 \sigma_{\alpha} \sqrt{\upzeta_u} } } 
      w^{\alpha - 1}
    \bb E  \left[ r_{\alpha} \left( \mathcal{X}_{u,t} \right)
          \mathds 1_{ \left\{ |\mathcal{Y}_{u,t}| \in \left[ w, \, 
                        e^{D(u) + a_2 \sigma_{\alpha} \sqrt{\upzeta_u} } \right]  \right\} }  
                           \right]  dw. 
\end{align*}
For $K_1$, since the eigenfunction $r_{\alpha}$ is bounded on $\bb S_+^{d-1}$,
using \eqref{Inequa_bar_Vn} we get 
\begin{align}\label{Pf_Bound_K1}
K_1 \leq c e^{ \alpha (D(u) + a_1 \sigma_{\alpha} \sqrt{\upzeta_u}) } 
  \bb P \left(  |\mathcal{Y}_{u,t}| \geq   e^{D(u) + a_1 \sigma_{\alpha} \sqrt{\upzeta_u} }   \right)
  \leq  C.
\end{align}
For $K_2$, we decompose it into two parts: $K_2 = K_{21} - K_{22}$, where
\begin{align*}
& K_{21} =  \alpha  \int_{ e^{D(u) + a_1 \sigma_{\alpha} \sqrt{\upzeta_u} } }^{ e^{D(u) + a_2 \sigma_{\alpha} \sqrt{\upzeta_u} } } 
      w^{\alpha - 1}
    \bb E  \left[ r_{\alpha} \left( \mathcal{X}_{u,t} \right)
          \mathds 1_{ \{ |\mathcal{Y}_{u,t}| \geq  w  \} }  
                          \right]  dw,    \nonumber\\
& K_{22} =  \alpha \int_{ e^{D(u) + a_1 \sigma_{\alpha} \sqrt{\upzeta_u} } }^{ e^{D(u) + a_2 \sigma_{\alpha} \sqrt{\upzeta_u} } } 
      w^{\alpha - 1}  dw
   \,  \bb E  \Big[ r_{\alpha} \left( \mathcal{X}_{u,t} \right)
          \mathds 1_{ \left\{ |\mathcal{Y}_{u,t}| \geq  e^{D(u) + a_2 \sigma_{\alpha} \sqrt{\upzeta_u} }  \right\} }  
                           \Big]. 
\end{align*}
Concerning $K_{21}$, by a change of variable $\log w = w'$, it holds that 
\begin{align*}
K_{21} = \alpha  \int_{ D(u) + a_1 \sigma_{\alpha} \sqrt{\upzeta_u}  }^{ D(u) + a_2 \sigma_{\alpha} \sqrt{\upzeta_u} } 
     e^{\alpha w}
    \bb E  \left[ r_{\alpha} \left( \mathcal{X}_{u,t} \right)
          \mathds 1_{ \left\{ |\mathcal{Y}_{u,t}| \geq  e^w  \right\} }  
                          \right]  dw.  
\end{align*}
Using Lemma \ref{Lem_V_weak_Uni_bb}, we get that, 
for any $t \in \bb R$, 
uniformly in $w \in J_u = [D(u) + a_1 \sigma_{\alpha} \sqrt{\upzeta_u}, D(u) + a_2 \sigma_{\alpha} \sqrt{\upzeta_u}]$,
\begin{align*}
\lim_{u \to \infty} 
 e^{\alpha w}
\bb E  \left[ r_{\alpha} \left( \mathcal{X}_{u,t} \right)
          \mathds 1_{ \left\{ |\mathcal{Y}_{u,t}| \geq  e^w  \right\} }  
                          \right]
 =  \mathscr{C} \nu_{\alpha}(r_{\alpha}). 
\end{align*}
Therefore, 
\begin{align}\label{Pf_Bound_K21}
\lim_{u \to \infty}
\frac{1}{\alpha \sigma_{\alpha}  \nu_{\alpha}(r_{\alpha}) \sqrt{\upzeta_u}} K_{21}
= \mathscr C (a_2 - a_1). 
\end{align}
Concerning $K_{22}$, using \eqref{Inequa_bar_Vn}, we get 
\begin{align}\label{Pf_Bound_K22}
K_{22} \leq  c e^{ \alpha (D(u) + a_2 \sigma_{\alpha} \sqrt{\upzeta_u}) } 
  \bb P \left(  |\mathcal{Y}_{u,t}| \geq   e^{D(u) + a_2 \sigma_{\alpha} \sqrt{\upzeta_u} }   \right)
  \leq  C.
\end{align}
Putting together \eqref{Pf_Bound_K1}, \eqref{Pf_Bound_K21} and \eqref{Pf_Bound_K22},
we conclude the proof of \eqref{Limit_Hu_a}. 
\end{proof}

Now we are equipped to give exact asymptotics for the first terms in \eqref{Pf_CLT_Decom_P} and \eqref{Pf_CLT_Decom_P-y}. 

\begin{proposition}\label{Prop_CLT_aaa}
Let $\rho$ be defined by \eqref{Def_rho}. 
If \ref{Condi_ExpMom}, \ref{Condi_AP} and \ref{Condi_NonArith} hold,  
then for any $t \in \bb R$, 
\begin{align}\label{Asym-Pi-upzeta-Yu-Au}
\lim_{u \to \infty} u^{\alpha} \, \bb P (|\Pi_{\upzeta_u} \mathcal{Y}_{u,t}| > u, A_{u,t}) = \mathscr C \Phi(t). 
\end{align}
If \ref{Condi_ExpMom}, \ref{Condi_NonArith} and \ref{Condi_Kes_Weak} hold, 
then for any $t \in \bb R$ and $y \in \bb S_+^{d-1}$, 
\begin{align}\label{Asym-Pi-upzeta-Yu-Au-y}
\lim_{u \to \infty} u^{\alpha} \, \bb P ( \langle y, \Pi_{\upzeta_u} \mathcal{Y}_{u,t} \rangle > u, \,  A_{u,t}) = \mathscr C  r_{\alpha}^*(y) \Phi(t). 
\end{align}
\end{proposition}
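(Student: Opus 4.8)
The plan is to substitute the exact identities \eqref{Pf_Upzeta_Yu_Au} and \eqref{Pf_Upzeta_Yu_Au-y} of the preceding lemma into the weak convergence of the measures $H_{u,t}$ provided by Lemma~\ref{Lem_Weak_Conver_Hu}. First I would rewrite \eqref{Pf_Upzeta_Yu_Au} in the form
\begin{align*}
u^{\alpha}\, \bb P \big( |\Pi_{\upzeta_u} \mathcal{Y}_{u,t}| > u,\, A_{u,t} \big)
= \frac{1+o(1)}{\sqrt{2\pi}} \int_{\bb D_{u,t}} e^{-\frac{1}{2}(\log|v|)^2}\, H_{u,t}(dv),
\end{align*}
and rewrite \eqref{Pf_Upzeta_Yu_Au-y} analogously for $\langle y, \Pi_{\upzeta_u} \mathcal{Y}_{u,t}\rangle$ with the extra prefactor $r_{\alpha}^*(y)$; the $o(1)$ in the directional identity is uniform in $y\in\bb S_+^{d-1}$ because the large deviation estimate \eqref{LDPet_ScalarProduct} is uniform in $y$ and $r_{\alpha}^*$ is continuous and strictly positive, hence bounded, on $\bb S_+^{d-1}$. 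Since $r_{\alpha}^*$ carries no $u$-dependence, both claims reduce to evaluating the single limit $\lim_{u\to\infty}\int_{\bb D_{u,t}} e^{-\frac{1}{2}(\log|v|)^2}\, H_{u,t}(dv)$, where $\bb D_{u,t}=\{v\in\bb R_+^d:\log|v|\in J_{u,t}\}$ and $J_{u,t}=\big[-D(u)/(\sigma_{\alpha}\sqrt{\upzeta_u}),\, t+\Delta\big]$.

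To handle this limit I would exploit that, by \eqref{Def_L_rho_u} and \eqref{Def_D_u_I_u}, the left endpoint $-D(u)/(\sigma_{\alpha}\sqrt{\upzeta_u})$ is of order $-\sqrt{\log\log u}$ and so runs off to $-\infty$, while the right endpoint $t+\Delta$ stays fixed, so that $J_{u,t}$ increases to $(-\infty, t+\Delta]$. Fixing $N > |t| + \Delta + 1$ and taking $u$ large enough that $-D(u)/(\sigma_{\alpha}\sqrt{\upzeta_u}) < -N$, I would split the integral into the contribution of the fixed box $\bb D_{-N,\,t+\Delta}$ and the contribution of the receding left tail, the latter covered by the finitely many unit cells $\bb D_{-(k+1),\,-k}$ with $k \ge N$ meeting $\bb D_{u,t}$. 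On the fixed box, applying \eqref{Limit_Hu_b} of Lemma~\ref{Lem_Weak_Conver_Hu} to a continuous compactly supported function that agrees with $s\mapsto e^{-s^2/2}$ on $[-N, t+\Delta]$ gives convergence to $\mathscr{C}\int_{-N}^{t+\Delta} e^{-s^2/2}\, ds$. On each cell $\bb D_{-(k+1),\,-k}$ one bounds $e^{-\frac{1}{2}(\log|v|)^2} \le e^{-k^2/2}$ and, by Lemma~\ref{Lem_Huannuals_Bound}, $H_{u,t}\big(\bb D_{-(k+1),\,-k}\big) \le c + c/\sqrt{\upzeta_u} \le 2c$ for $u$ large, with $c$ independent of $u$ and $k$; summing over $k \ge N$ bounds the entire left tail by $2c\sum_{k\ge N} e^{-k^2/2}$, uniformly in $u$. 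Letting first $u\to\infty$ and then $N\to\infty$ yields
\begin{align*}
\lim_{u\to\infty}\int_{\bb D_{u,t}} e^{-\frac{1}{2}(\log|v|)^2}\, H_{u,t}(dv) = \mathscr{C}\int_{-\infty}^{t+\Delta} e^{-s^2/2}\, ds = \sqrt{2\pi}\,\mathscr{C}\,\Phi(t+\Delta),
\end{align*}
and substituting back proves \eqref{Asym-Pi-upzeta-Yu-Au} and \eqref{Asym-Pi-upzeta-Yu-Au-y} (the latter uniformly in $y$), with $\Phi(t)$ recovered after the usual passage $\Delta\downarrow 0$ in the truncation.

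The step I expect to be the main obstacle is the control of this receding left endpoint: the weak convergence \eqref{Limit_Hu_b} only holds on fixed bounded boxes, so an additional uniform estimate of the $H_{u,t}$-mass far out in the left tail is indispensable, and this is exactly the purpose of Lemma~\ref{Lem_Huannuals_Bound}. It suffices precisely because its bound is linear in the length of the box with a constant independent of $u$ and of the endpoints, while the Gaussian weight $e^{-\frac{1}{2}(\log|v|)^2}$ decays super-exponentially, so that the product is summable uniformly in $u$. The remaining points are routine: the directional statement is obtained from the non-directional one word for word, once the uniformity in $y$ of \eqref{LDPet_ScalarProduct} has been propagated through the derivation of \eqref{Pf_Upzeta_Yu_Au-y}.
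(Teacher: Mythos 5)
Your proposal is correct and follows essentially the same route as the paper: substitute \eqref{Pf_Upzeta_Yu_Au}--\eqref{Pf_Upzeta_Yu_Au-y}, split $\bb D_{u,t}$ into a fixed bounded box handled by the weak convergence \eqref{Limit_Hu_b} of Lemma \ref{Lem_Weak_Conver_Hu} and a receding left tail controlled via Lemma \ref{Lem_Huannuals_Bound} against the Gaussian weight, then let $u \to \infty$, the truncation level tend to infinity, and $\Delta \downarrow 0$. Your unit-cell covering of the left tail is just an explicit rendering of the paper's one-line estimate $\int_{\bb D_{u,b}^{(1)}} e^{-\frac{1}{2}(\log|v|)^2} H_{u,t}(dv) \leq c\, e^{-b^2/2} + c/\sqrt{\upzeta_u}$, so the two arguments coincide in substance.
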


\begin{proof}
We first prove \eqref{Asym-Pi-upzeta-Yu-Au}. 
For $b > 1$, we denote
\begin{align*}
& \bb D_{u,b}^{(1)} = \left\{ v \in \bb R_+^d:  
    \log |v| \in  \left[ - \frac{ D(u) }{ \sigma_{\alpha} \sqrt{\upzeta_u} }, \, -b \right]  \right\}, \nonumber\\
& \bb D_{t,b}^{(2)} = \left\{ v \in \bb R_+^d:  
      \log |v| \in  \left(- b, t + \Delta \right)  \right\}. 
\end{align*}
Then $\bb D_{u,t} = \bb D_{u,b}^{(1)} \cup \bb D_{t,b}^{(2)}$
and hence from \eqref{Pf_Upzeta_Yu_Au} we have that as $u \to \infty$, 
\begin{align*}
 u^{\alpha} \bb P (|\Pi_{\upzeta_u} \mathcal{Y}_{u,t}| > u, \,  A_{u,t})   
& = \frac{1}{\sqrt{2 \pi}}   
  \int_{\bb D_{u,t}}  e^{- \frac{1}{2} (\log |v|)^2} H_{u,t}(dv) ( 1 + o(1))  \nonumber\\
& = \frac{1}{\sqrt{2 \pi}} \left(  \int_{\bb D_{u,b}^{(1)}}  e^{- \frac{1}{2} (\log |v|)^2} H_{u,t}(dv)
   +  \int_{\bb D_{t,b}^{(2)}}  e^{- \frac{1}{2} (\log |v|)^2} H_{u,t}(dv) \right) (1 + o(1)). 
\end{align*}
For the first term, using Lemma \ref{Lem_Huannuals_Bound}, 
we get that there exists a constant $c >0$ such that for any $t \in \bb R$ and $b > 1$, 
\begin{align*}  
\int_{\bb D_{u,b}^{(1)}}  e^{- \frac{1}{2} (\log |v|)^2} H_{u,t}(dv)
& \leq  c \int_{ - \frac{ D(u) }{ \sigma_{\alpha} \sqrt{\upzeta_u} } }^{-b} e^{-\frac{1}{2} y^2} dy 
    + \frac{c}{\sqrt{\upzeta_u}}   \nonumber\\
& \leq   c \int_{ -\infty }^{-b} e^{-\frac{1}{2} y^2} dy 
    + \frac{c}{\sqrt{\upzeta_u}}  
    \leq  c e^{- \frac{1}{2} b^2} + \frac{c}{\sqrt{\upzeta_u}},
\end{align*}
where we used the inequality $\int_{ -\infty }^{-b} e^{-\frac{1}{2} y^2} dy \leq e^{- \frac{1}{2} b^2}$.
Consequently,
\begin{align}\label{Pf_Au_Du1}
\lim_{b \to \infty}
\lim_{u \to \infty} \int_{\bb D_{u,b}^{(1)}}  e^{- \frac{1}{2} (\log |v|)^2} H_{u,t}(dv) = 0. 
\end{align}
For the second term, using Lemma \ref{Lem_Weak_Conver_Hu}, we obtain that for any $t \in \bb R$, 
\begin{align*}
\lim_{u \to \infty} \int_{\bb D_{t,b}^{(2)}}  e^{- \frac{1}{2} (\log |v|)^2} H_{u,t}(dv)
= \mathscr C  \int_{-b}^{t + \Delta} e^{-\frac{1}{2} y^2} dy, 
\end{align*}
so that 
\begin{align}\label{Pf_Au_Du2}
\lim_{\Delta \to 0} \lim_{b \to \infty}
\lim_{u \to \infty}  \int_{\bb D_{t,b}^{(2)}}  e^{- \frac{1}{2} (\log |v|)^2} H_{u,t}(dv)
= \mathscr C  \int_{-\infty}^{t} e^{-\frac{1}{2} y^2} dy. 
\end{align}
Combining \eqref{Pf_Au_Du1} and \eqref{Pf_Au_Du2}, we conclude the proof of \eqref{Asym-Pi-upzeta-Yu-Au}. 
Using \eqref{Pf_Upzeta_Yu_Au-y} instead of \eqref{Pf_Upzeta_Yu_Au}, the proof of \eqref{Asym-Pi-upzeta-Yu-Au-y} is similar. 
\end{proof}

Now we proceed to give estimates for the second terms in \eqref{Pf_CLT_Decom_P} and \eqref{Pf_CLT_Decom_P-y}. 

\begin{proposition}\label{Prop_CLT_bbb}
Assume \ref{Condi_ExpMom}, \ref{Condi_AP} and \ref{Condi_NonArith}. 
Let $\rho$ be defined by \eqref{Def_rho}. 
Then, for any $t \in \bb R$ and $y \in \bb S_+^{d-1}$, 
\begin{align}\label{Asyp-second-Pi-upzeta}
\lim_{u \to \infty} u^{\alpha} \bb P (|\Pi_{\upzeta_u} \mathcal{Y}_{u,t}| > u, A_{u,t}^c) = 0, 
\qquad
\lim_{u \to \infty} u^{\alpha} \bb P ( \langle y, \Pi_{\upzeta_u} \mathcal{Y}_{u,t} \rangle > u, A_{u,t}^c) = 0.  
\end{align}
\end{proposition}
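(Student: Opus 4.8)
The plan is to prove only the first limit in \eqref{Asyp-second-Pi-upzeta}; the second then follows immediately, since for $y\in\bb S_+^{d-1}$ and $w\in\bb R^d_+$ one has $\langle y,w\rangle\le|w|$, whence $\{\langle y,\Pi_{\upzeta_u}\mathcal{Y}_{u,t}\rangle>u\}\subseteq\{|\Pi_{\upzeta_u}\mathcal{Y}_{u,t}|>u\}$. Throughout, $\Delta>0$ and $b$ are the fixed constants from \eqref{Def_L_rho_u}, with $b$ taken large enough that Lemma \ref{Lem_tau_Low_b} applies with $\delta=2$. Set $v_0:=D(u)+\sigma_\alpha(t+\Delta)\sqrt{\upzeta_u}$, so that $A_{u,t}^c=\mathcal{L}\cup\mathcal{H}$ with $\mathcal{L}:=\{|\mathcal{Y}_{u,t}|<1\}$ and $\mathcal{H}:=\{\log|\mathcal{Y}_{u,t}|>v_0\}$; I will bound the two contributions separately.

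On $\mathcal{L}$ we have $|\Pi_{\upzeta_u}\mathcal{Y}_{u,t}|\le\|\Pi_{\upzeta_u}\|\,|\mathcal{Y}_{u,t}|<\|\Pi_{\upzeta_u}\|$, hence $\{|\Pi_{\upzeta_u}\mathcal{Y}_{u,t}|>u\}\cap\mathcal{L}\subseteq\{\|\Pi_{\upzeta_u}\|>u\}$. For a slowly vanishing exponent $\epsilon=\epsilon(u)\downarrow0$, Markov's inequality and Lemma \ref{Lem_kappa} give $\bb P(\|\Pi_{\upzeta_u}\|>u)\le u^{-(\alpha+\epsilon)}\bb E\|\Pi_{\upzeta_u}\|^{\alpha+\epsilon}\le c\,u^{-(\alpha+\epsilon)}\kappa(\alpha+\epsilon)^{\upzeta_u}$. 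Using $\Lambda(\alpha+\epsilon)=\epsilon/\rho+O(\epsilon^2)$ (recall $\Lambda(\alpha)=0$, $\Lambda'(\alpha)=1/\rho$) together with $\upzeta_u\le(\rho-L_\rho(u))\log u$, this yields $u^\alpha\,\bb P(\|\Pi_{\upzeta_u}\|>u)\le c\exp\{\epsilon(\log u)(-L_\rho(u)/\rho+O(\epsilon))\}$, and choosing $\epsilon(u)$ a suitable small constant times $L_\rho(u)$ makes the bracket $\le-L_\rho(u)/(2\rho)$ for large $u$. Since $L_\rho(u)^2\log u=b^2\log\log u$, the right-hand side is then at most $c\,(\log u)^{-c'b^2}\to0$, so $u^\alpha\,\bb P(|\Pi_{\upzeta_u}\mathcal{Y}_{u,t}|>u,\mathcal{L})\to0$. (Alternatively, since $\|M\|=\max_i|Me_i|$ for nonnegative $M$, one may bound $\bb P(\|\Pi_{\upzeta_u}\|>u)$ directly by applying \eqref{LDPet_VectorNorm} to each $|\Pi_{\upzeta_u}e_i|$.)

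For $\mathcal{H}$ the decisive point is the tail estimate $\bb P(|\mathcal{Y}_{u,t}|>e^v)\le c\,v^{-2}e^{-\alpha v}$, uniformly in $v\ge v_0$ and for $u$ large. To prove it, recall that $\mathcal{Y}_{u,t}$ has the same law as $V_{m_{u,t}}$ with $m_{u,t}:=\eta_{u,t}-\upzeta_u=L_\rho(u)\log u+t\rho^{3/2}\sigma_\alpha\sqrt{\log u}+O(1)$, so $\bb P(|\mathcal{Y}_{u,t}|>e^v)=\bb P(\tau_{e^v}\le m_{u,t})$. The key order-of-magnitude check is that $m_{u,t}<\rho v-b\sqrt{v\log v}$ for every $v\ge v_0$: since $D(u)=\tfrac{L_\rho(u)}{\rho}\log u+O(1)$ and $\sqrt{\upzeta_u}=\rho^{1/2}\sqrt{\log u}\,(1+o(1))$, one computes $\rho v_0-m_{u,t}=\Delta\sigma_\alpha\rho^{3/2}\sqrt{\log u}\,(1+o(1))>0$ (the $t$-terms cancel), whereas $b\sqrt{v_0\log v_0}=O\big((\log u)^{1/4}(\log\log u)^{3/4}\big)=o(\sqrt{\log u})$; the inequality at $v=v_0$ then extends to all $v\ge v_0$ by monotonicity of $v\mapsto\rho v-b\sqrt{v\log v}$. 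Applying Lemma \ref{Lem_tau_Low_b} with $u$ replaced by $e^v$ and $\delta=2$ gives $\bb P(\tau_{e^v}\le\rho v-b\sqrt{v\log v})\le c\,v^{-2}e^{-\alpha v}$, proving the estimate. Now, using the crude bound $\bb P(\|\Pi_{\upzeta_u}\|>\theta)\le c\,\theta^{-\alpha}\kappa(\alpha)^{\upzeta_u}=c\,\theta^{-\alpha}$ for all $\theta>0$ (Lemma \ref{Lem_kappa}, $\kappa(\alpha)=1$), the independence of $\Pi_{\upzeta_u}$ and $\mathcal{Y}_{u,t}$, and the Fubini/integration-by-parts device from the proof of Lemma \ref{Lem_Weak_Conver_Hu}, I obtain
\begin{align*}
\bb P(|\Pi_{\upzeta_u}\mathcal{Y}_{u,t}|>u,\mathcal{H})\ \le\ c\,u^{-\alpha}\,\bb E\big[|\mathcal{Y}_{u,t}|^{\alpha}\mathds 1_{\{|\mathcal{Y}_{u,t}|>e^{v_0}\}}\big]\ \le\ c\,u^{-\alpha}\Big(e^{\alpha v_0}\bb P(|\mathcal{Y}_{u,t}|>e^{v_0})+\alpha\!\int_{v_0}^{\infty}\!v^{-2}\,dv\Big)\ \le\ c'\,u^{-\alpha}v_0^{-1},
\end{align*}
and hence $u^\alpha\,\bb P(|\Pi_{\upzeta_u}\mathcal{Y}_{u,t}|>u,\mathcal{H})\le c'/v_0\to0$ because $v_0\ge D(u)\to\infty$. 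Together with the $\mathcal{L}$-bound this gives the first limit. The main obstacle is precisely the order-of-magnitude bookkeeping behind $m_{u,t}<\rho v_0-b\sqrt{v_0\log v_0}$: this is where the strict positivity of the perturbation $\Delta$ is essential — it produces a surplus of order $\sqrt{\log u}$ of ``distance to be crossed'' over the number of available steps — and it is the only step in which the exact definitions of $D(u)$, $\upzeta_u$ and $\eta_{u,t}$ enter in an essential way.
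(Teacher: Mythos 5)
Your argument is correct, and it is a legitimate self-contained substitute for the step that the paper only cites (step 4 of the proof of Theorem 2.2 in \cite{BCDZ16} and the arXiv version): the overall strategy is the same — split $A_{u,t}^c$ into the low piece $\{|\mathcal{Y}_{u,t}|<1\}$, absorbed into $\{\|\Pi_{\upzeta_u}\|>u\}$ and killed by a Chernoff bound at the slightly tilted exponent $\alpha+\epsilon(u)$ with $\epsilon(u)\asymp L_\rho(u)$ (this is exactly where $\upzeta_u\le(\rho-L_\rho(u))\log u$ pays off, giving the $(\log u)^{-c'b^2}$ decay), and the high piece $\{\log|\mathcal{Y}_{u,t}|>v_0\}$, handled by independence, $\bb E\|\Pi_{\upzeta_u}\|^\alpha=O(1)$ and a quantitative tail bound for $\mathcal{Y}_{u,t}$ fed into the integration-by-parts device. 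The one genuine difference is how you obtain that tail bound: you write $\mathcal{Y}_{u,t}\overset{d}{=}V_{m_{u,t}}$ and invoke Lemma \ref{Lem_tau_Low_b} at level $e^v$, which works precisely because $\rho v_0-m_{u,t}\asymp\Delta\sigma_\alpha\rho^{3/2}\sqrt{\log u}$ dominates $b\sqrt{v_0\log v_0}=o(\sqrt{\log u})$ — your bookkeeping here is right and correctly isolates where $\Delta>0$ enters. The BCDZ16-style route instead uses a direct Chernoff/moment bound on the finite perpetuity (in this paper's toolkit, Lemma \ref{Lem_Yn_u} applied to $V_{m_{u,t}}$ with a fixed $\ee>0$), which yields an extra factor $e^{-\ee v}$ in the tail; that variant buys you freedom from any condition on $b$, whereas your route needs $b$ large enough that Lemma \ref{Lem_tau_Low_b} holds with $\delta=2$ (so that $v^{-\delta}$ is integrable). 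The latter is harmless, since in the proof of Theorem \ref{Thm_CLT_Perpe} the constant $b$ is at one's disposal and the conclusion does not depend on it, but it is worth stating explicitly that the proposition is then proved for that (sufficiently large) choice of $b$.

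Two cosmetic points. First, your closing justification ``$v_0\ge D(u)$'' fails when $t+\Delta<0$; what you actually need, and what is true in all cases, is $v_0=D(u)+\sigma_\alpha(t+\Delta)\sqrt{\upzeta_u}=D(u)(1+o(1))\to\infty$, since the $\sqrt{\upzeta_u}$ term is $O(\sqrt{\log u})=o(D(u))$. Second, when applying Lemma \ref{Lem_kappa} with the $u$-dependent exponent $\alpha+\epsilon(u)$ you are implicitly using the uniformity of its constant over a compact neighborhood of $\alpha$ in $I_\mu^\circ$; that is exactly what the lemma provides, but it deserves a word.
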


\begin{proof}
The proof of this proposition goes along the same lines as step 4 of the proof of Theorem 2.2 in \cite{BCDZ16}, 
where $\mathcal G_u$ is replaced by $A_{u,t}$ in our notation, 
see pages 3719--3721. 
We also refer to the arXiv version of our article (see arXiv:2307.04985) for details of the proof.
\end{proof}

\begin{proof}[Proof of Theorem \ref{Thm_CLT_Perpe}]
Theorem \ref{Thm_CLT_Perpe} follows from \eqref{pf-CLT-equivalence-tau-u}, \eqref{pf-CLT-equivalence-tau-u-y},
Propositions \ref{Prop_CLT_aaa} and \ref{Prop_CLT_bbb}. 
\end{proof}

\section{Proof of Theorems \ref{Thm_LD_Perpe_Petrov} and \ref{Cor_LLT_LD_tau}}\label{Sec-Pf-Thms-5-8}

The goal of this section is to establish Theorem \ref{Thm_LD_Perpe_Petrov}
on precise large deviations for the first passage times $\tau_u$ and $\tau_u^y$, 
and Theorem \ref{Cor_LLT_LD_tau} on local limit theorems for $\tau_u$ and $\tau_u^y$. 
We shall need Lemma \ref{Lem_Martin}. 

\begin{proof}[Proof of Lemma \ref{Lem_Martin}]
We start by proving that 
\begin{align*}
W_n(s): =  \frac{1}{ \kappa(s)^n }  |V_n^*|^s r_s \left( \frac{V_n^*}{|V_n^*|} \right)
\end{align*}
is a submartingale with respect to the filtration $\mathscr{F}_n : = \sigma \left( (M_i, Q_i)_{i=1}^n \right)$. 
In view of \eqref{Def_r_s}, we have
$r_{s}(x)= \int_{\bb S_+^{d-1}} \langle x, y\rangle^s  \nu^*_s(dy)$ and hence
\begin{align*}
\bb E ( W_{n+1}(s) | \mathscr{F}_n )
& = \frac{1}{ \kappa(s)^{n+1} } 
 \bb E \left( \left.  |V_{n+1}^*|^s 
    \int_{\bb S_+^{d-1}} \left\langle \frac{ V_{n+1}^* }{ |V_{n+1}^*| }, y \right\rangle^s  \nu^*_s(dy)   \right| \mathscr{F}_n \right)
 \nonumber\\
& =  \frac{1}{ \kappa(s)^{n+1} } 
 \bb E \left(  \left.  \int_{\bb S_+^{d-1}} \langle  V_{n+1}^*, y\rangle^s  \nu^*_s(dy) \right| \mathscr{F}_n \right). 
\end{align*}
By \eqref{Recur_Equ}, we have $V_{n+1}^* = M_{n+1} V_n^* + Q_{n+1}$. 
Since each component $Q_n$ is non-negative and $P_s r_s = \kappa(s) r_s$, we get
\begin{align*}
\bb E ( W_{n+1}(s) | \mathscr{F}_n ) 
& \geq  \frac{1}{ \kappa(s)^{n+1} } 
 \bb E \left( \left.  \int_{\bb S_+^{d-1}}  \langle M_{n+1} V_n^*, y\rangle^s  \nu^*_s(dy) \right| \mathscr{F}_n \right)
 \nonumber\\
&  =  \frac{1}{ \kappa(s)^{n+1} } 
 \bb E \left[ \left. r_s \left( \frac{M_{n+1} V_n^*}{|M_{n+1} V_n^*|} \right) |M_{n+1} V_n^*|^s  \right| \mathscr{F}_n \right]
 \nonumber\\
&  =  \frac{1}{ \kappa(s)^{n+1} } 
 \bb E \left[ \left.  r_s \left( \frac{M_{n+1} V_n^*}{|M_{n+1} V_n^*|} \right) 
    \left| M_{n+1} \frac{V_n^*}{|V_n^*|} \right|^s  |V_n^*|^s  \right| V_n^* \right]   \nonumber\\
&  =  \frac{1}{ \kappa(s)^{n+1} }  |V_n^*|^s  P_s r_s \left( \frac{V_n^*}{|V_n^*|} \right)   \nonumber\\
&  =  \frac{1}{ \kappa(s)^n }  |V_n^*|^s  r_s \left( \frac{V_n^*}{|V_n^*|} \right)  = W_n(s).  
\end{align*}
This shows that $(W_n(s), \mathscr{F}_n)$ is a submartingale.

We next prove $\sup_{n \geq 1} \bb E W_n(s) < \infty$. 
Using the fact that $r_s$ is bounded on $\bb S_+^{d-1}$, 
in view of \eqref{Def_Yn02}, we obtain that if $0< s \leq 1$, then
\begin{align*}
\bb E W_n(s) \leq \frac{c}{ \kappa(s)^n } \bb E \left( |V_n^*|^s \right)
      \leq \frac{c}{ \kappa(s)^n } \sum_{j=1}^n \bb E \left( \|M_n \ldots M_{j+1} \|^s |Q_j|^s \right).   
\end{align*}
By independence and Lemma \ref{Lem_kappa}, it follows that 
\begin{align*}
\bb E W_n(s) \leq  \frac{c_s}{ \kappa(s)^n } \bb E (|Q_1|^s) \sum_{j=1}^n  \kappa(s)^j
      =  c_s \bb E (|Q_1|^s) \frac{\kappa(s)}{\kappa(s) - 1}  \frac{ \kappa(s)^n - 1 }{ \kappa(s)^n }  \leq  c_s,
\end{align*}
where in the last inequality we used $\bb E (|Q_1|^s) < \infty$ and the fact that $\kappa(s) > 1$ for any $s > \alpha$. 
This shows that $\sup_{n \geq 1} \bb E W_n(s) < \infty$ if $0< s \leq 1$.
If $s>1$, we apply Minkowski's inequality, Lemma \ref{Lem_kappa} and $\bb E (|Q_1|^s) < \infty$ to get that 
\begin{align*}
 \big[\bb E (|V_n^*|^s) \big]^{ \frac{1}{s} }  
   & \leq  \sum_{j=1}^n  \Big[ \bb E \big( |(M_n \ldots M_{j+1}) Q_j|^s  \big) \Big]^{ \frac{1}{s} }  \nonumber\\
   &  \leq  \sum_{j=1}^n  \Big[ \bb E ( \|M_n \ldots M_{j+1} \|^s ) \bb E(|Q_j|^s)  \Big]^{ \frac{1}{s} }  \nonumber\\
   &  \leq  c_s \big[ \bb E(|Q_1|^s) \big]^{ \frac{1}{s} } \sum_{j=1}^n    \kappa(s)^{\frac{n-j}{s}}  \nonumber\\
   &  \leq  c_s  \frac{ \kappa(s)^{ \frac{n}{s} } - 1 }{\kappa(s)^{ \frac{1}{s} } - 1}  \notag\\
   & \leq  c_s  \kappa(s)^{ \frac{n}{s} },
\end{align*}
where in the last inequality we used the fact that $\kappa(s) > 1$ for any $s > \alpha$. 
This, together with the boundedness of the eigenfunction $r_s$, implies
\begin{align*}
( \bb E W_n(s) )^{ \frac{1}{s} }  \leq \frac{c}{ \kappa(s)^{ \frac{n}{s} } } \big[\bb E (|V_n^*|^s) \big]^{ \frac{1}{s} }
  \leq c_s, 
\end{align*}
as desired. 

We finally show $\lim_{ n \to \infty } W_n(s) = W(s)$ a.s. with $\bb E W(s) < \infty$, and \eqref{Limit-Vn-Vnstar}. 
Since $(W_n(s), \mathscr{F}_n)$ is a submartingale and $\sup_{n \geq 1} \bb E W_n(s) < \infty$,
the convergence $W_n(s) \to W(s)$ a.s. follows from Doob's martingale convergence theorem. 
Since $W_n(s) \geq 0$, 
using Fatou's lemma, we get
\begin{align*}
\bb E W(s) = \bb E \left( \liminf_{n \to \infty} W_n(s) \right) \leq \liminf_{n \to \infty} \bb E W_n(s)
\leq \sup_{n \geq 1} \bb E W_n(s) < \infty. 
\end{align*}
The equality in \eqref{Limit-Vn-Vnstar} holds since $V_n$ and $V_n^*$ have the same distribution. 
The limit in \eqref{Limit-Vn-Vnstar} exists and is finite due to the submartingale property of $(W_n(s), \mathscr{F}_n)$ 
and the fact that $\sup_{n \geq 1} \bb E W_n(s) < \infty$. 
By condition \ref{Condi_ExpMom}, we have $\bb P (Q_1 = 0) < 1$, 
so that $\bb E W_1(s) > 0$ by using the fact that $r_s$ is strictly positive on $\bb S_+^{d-1}$. 
This, combined with $\bb E W_n(s) \leq \bb E W_{n+1}(s)$, shows that the limit in \eqref{Limit-Vn-Vnstar} is strictly positive. 
\end{proof}

\subsection{Proof of Theorem \ref{Thm_LD_Perpe_Petrov}}
In view of Lemma \ref{Lem_tau_Low}, it suffices to prove that for any $a > \frac{3 + 2s}{2\Lambda(s)}$ and $\epsilon >0$,
we have, as $u \to \infty$, uniformly in $l \in [0, (\log u)^{-\epsilon}]$, 
\begin{align}\label{LD-pf-tau-u}
\bb P \Big( (\beta - l) \log u - a \log \log u < \tau_u \leq (\beta - l) \log u \Big) 
= \frac{ \mathscr C_{\beta, l} (u) }{\sqrt{\log u}} u^{-I(\beta - l)} (1 + o(1)),
\end{align}
and uniformly in $y \in \bb S_+^{d-1}$, 
\begin{align}\label{LD-pf-tau-u-y}
\bb P \Big( (\beta - l) \log u - a \log \log u < \tau_u^y \leq (\beta - l) \log u \Big) 
= r_s^*(y) \frac{ \mathscr C_{\beta, l} (u) }{\sqrt{\log u}} u^{-I(\beta - l)} (1 + o(1)). 
\end{align}
We first prove \eqref{LD-pf-tau-u}. 
For brevity, we denote
\begin{align}\label{def-upzeta-u-beta-u}
\upzeta_u = \floor{ (\beta - l) \log u - a \log \log u},  \qquad  \beta_u = \floor{ (\beta - l) \log u}
\end{align}
and 
\begin{align}\label{Pf_Def_Yu}
\mathcal{Y}_u  = Q_{\upzeta_u + 1} + M_{\upzeta_u + 1} Q_{\upzeta_u + 2} + \ldots 
    + (M_{\upzeta_u + 1} \ldots M_{\beta_u - 1}) Q_{\beta_u}.  
\end{align}
Then the following decomposition holds: 
\begin{align}\label{Pf_Yn_Equ01}
V_{\beta_u} = V_{\upzeta_u} + \Pi_{\upzeta_u} \mathcal{Y}_u. 
\end{align}
Let us first give a precise large deviation asymptotic for the second part $\Pi_{\upzeta_u} \mathcal{Y}_u$, 
and we will see later that the contribution of the first part $V_{\upzeta_u}$ can be neglected. 
Denote
\begin{align*}
\mathcal{X}_u  = \frac{ \mathcal{Y}_u }{ |\mathcal{Y}_u| }, 
\end{align*}
then $\mathcal{X}_u \in \bb S_+^{d-1}$ and it holds that
\begin{align}
& \bb P (|\Pi_{\upzeta_u} \mathcal{Y}_u| > u) 
   = \bb P( \log |\Pi_{\upzeta_u} \mathcal{X}_u| > \log u - \log |\mathcal{Y}_u|),   \label{Pf_Identi001}  \\
&  \bb P ( \langle y, \Pi_{\upzeta_u} \mathcal{Y}_u \rangle > u) 
   = \bb P( \log \langle y, \Pi_{\upzeta_u} \mathcal{X}_u \rangle > \log u - \log |\mathcal{Y}_u|).   \label{Pf_Identi001-y}
\end{align}
For any starting point $x \in \bb S_+^{d-1}$, 
the inequality $\log |\Pi_{\upzeta_u} x| > \log u - w$ holds if and only if 
\begin{align}\label{Pf_LD_luy}
\frac{ \log |\Pi_{\upzeta_u} x| }{ \upzeta_u } > \frac{ \log u - w }{ \upzeta_u }
 : = \frac{1}{\beta - l} + l_{u,w},
\end{align}
where, by \eqref{def-upzeta-u-beta-u}, 
\begin{align*}
l_{u,w}  = \frac{ \log u }{\upzeta_u} - \frac{1}{\beta - l}  -  \frac{w}{ \upzeta_u }
          =  \delta_u  \frac{\log \log u}{\log u} - \frac{w}{ \upzeta_u }, 
\end{align*}
with $0< \delta_u < \frac{2a}{\beta}$ for large enough $u > 0$. 
Then it holds that $l_{u,w} \to 0$ and $\upzeta_u l_{u,w}^2 \to 0$ as $u \to \infty$,  
uniformly in $w \in [- (\log u)^{\epsilon/2}, (\log u)^{\epsilon/2}]$
and $l \in [0, (\log u)^{-\epsilon}]$. 
Now we apply \eqref{LDPet_VectorNorm} of Theorem \ref{PetrovThm} with $n = \upzeta_u$, $q = \Lambda'(s) = \frac{1}{\beta}$
and $l_1 = \frac{1}{\beta - l} - \frac{1}{\beta} + l_{u,w}$ to 
obtain that as $u \to \infty$, 
uniformly in $x \in \bb S_+^{d-1}$, $|w| \leq (\log u)^{\epsilon/2}$ and $l \in [0, (\log u)^{-\epsilon}]$, 
\begin{align}\label{Pf_LD_PRM}
& \mathbb{P} \big( \log |\Pi_{\upzeta_u} x| > \log u - w \big)   \nonumber\\
&  =  \frac{ r_s(x) }{s \sigma_s  \nu_s(r_s) \sqrt{2 \pi \upzeta_u}}
    \exp \left\{ - \upzeta_u \Lambda^* \left( \frac{1}{\beta - l} + l_{u,w} \right)  \right\} ( 1 + o(1)).
\end{align}
As in \eqref{beta-l-Lambda}, 
there exists $l_s \geq 0$ satisfying $l_s = O((\log u)^{-\epsilon})$ such that $(\beta - l)^{-1} = \Lambda'(s + l_s).$
Hence, using \eqref{Def Jsl}, we get
\begin{align*}
& \Lambda^* \left( \frac{1}{\beta - l} + l_{u,w} \right)  
 = \Lambda^* \left( \frac{1}{\beta - l} \right) + (s+l_s) l_{u,w} + \frac{l_{u,w}^2}{ 2 \sigma_{s+l_s}^2 }
   - \frac{l_{u,w}^3}{ \sigma_{s+l_s}^3 } \xi_{s+l_s} \left(\frac{ l_{u,w} }{\sigma_{s+l_s}} \right). 
\end{align*}
Since $\upzeta_u l_{u,w}^2 \to 0$ as $u \to \infty$, this yields
that uniformly in $|y| \leq (\log u)^{\epsilon/2}$ and $l \in [0, (\log u)^{-\epsilon}]$, 
\begin{align}\label{Expansion-upzeta-aa}
\upzeta_u \Lambda^* \left( \frac{1}{\beta - l} + l_{u,w} \right)
= \upzeta_u \left[  \Lambda^* \left( \frac{1}{\beta - l} \right) + (s+l_s) l_{u,w} \right] (1+o(1)).
\end{align}
By the definition of $\Lambda^*$ and \eqref{Pf_LD_luy}, it follows that 
\begin{align}\label{Expansion-upzeta-bb}
 \upzeta_u \left[  \Lambda^* \left( \frac{1}{\beta - l} \right) + (s+l_s) l_{u,w} \right]   
& = \upzeta_u \left[ (s+l_s) \frac{1}{\beta - l} - \Lambda(s+l_s) + (s+l_s) l_{u,w} \right]  \nonumber\\
& = (s+l_s)(\log u - y) - \upzeta_u \Lambda(s+l_s). 
\end{align}
Using $\upzeta_u = \beta \log u (1+o(1))$ as $u \to \infty$, 
and substituting \eqref{Expansion-upzeta-aa} and \eqref{Expansion-upzeta-bb} into \eqref{Pf_LD_PRM},
we get that
as $u \to \infty$, 
uniformly in $x \in \bb S_+^{d-1}$, $|w| \leq (\log u)^{\epsilon/2}$ and $l \in [0, (\log u)^{-\epsilon}]$, 
\begin{align}
 \mathbb{P} \big( \log |\Pi_{\upzeta_u} x| > \log u - w \big)   
&  = \frac{ r_s(x) }{s \sigma_s  \nu_s(r_s) \sqrt{2 \pi \beta \log u}}
    u^{-(s+l_s)} e^{(s+l_s)w}  \kappa(s+l_s)^{\upzeta_u} ( 1 + o(1))  \nonumber\\
&  =  \frac{ r_s(x) u^{- I(\beta - l)} }{s \sigma_s  \nu_s(r_s) \sqrt{2 \pi \beta \log u}}
     e^{(s+l_s)w}  \kappa(s+l_s)^{\upzeta_u - (\beta-l) \log u} ( 1 + o(1))  \nonumber\\
&  =  \frac{ r_s(x) u^{- I(\beta - l)} }{s \sigma_s  \nu_s(r_s) \sqrt{2 \pi \beta \log u}}
     e^{sw} \kappa(s+l_s)^{\upzeta_u - (\beta-l) \log u} ( 1 + o(1)),  \label{Pf_LD_PRM02}
\end{align}
where in the second equality we used \eqref{Pf_Iden_Ibeta}
and in the last equality we used the fact that $l_s = O((\log u)^{-\epsilon})$ so that
$l_s w \to 0$ as $u \to \infty$, uniformly in $|w| \leq (\log u)^{\epsilon/2}$.  
Similarly, following the proof of \eqref{Pf_LD_PRM02}, one can use \eqref{LDPet_ScalarProduct} instead of \eqref{LDPet_VectorNorm}
to obtain that as $u \to \infty$, 
uniformly in $x, y \in \bb S_+^{d-1}$, $|w| \leq (\log u)^{\epsilon/2}$ and $l \in [0, (\log u)^{-\epsilon}]$, 
\begin{align}
 \mathbb{P} \big( \log \langle y, \Pi_{\upzeta_u} x \rangle > \log u - w \big)   
  =  \frac{ r_s(x) r_s^*(y) u^{- I(\beta - l)} }{s \sigma_s  \nu_s(r_s) \sqrt{2 \pi \beta \log u}}
     e^{sw} \kappa(s+l_s)^{\upzeta_u - (\beta-l) \log u} ( 1 + o(1)).  \label{Pf_LD_PRM02-y}
\end{align}
For brevity, we denote
\begin{align*}
B_u = \left\{ \log |\mathcal{Y}_u| \in [- (\log u)^{\epsilon/2}, (\log u)^{\epsilon/2}] \right\} 
\end{align*}
and write $B_u^c$ for its complement.
Then we have the following decompositions: 
\begin{align}\label{Pf_Thm1_Decom_P}
\bb P (|\Pi_{\upzeta_u} \mathcal{Y}_u| > u) 
 = \bb P (|\Pi_{\upzeta_u} \mathcal{Y}_u| > u, \,  B_u)  + \bb P (|\Pi_{\upzeta_u} \mathcal{Y}_u| > u, \,  B_u^c), 
\end{align}
and for any $y \in \bb S_+^{d-1}$, 
\begin{align}\label{Pf_Thm1_Decom_P-y}
\bb P ( \langle y, \Pi_{\upzeta_u} \mathcal{Y}_u \rangle > u) 
 = \bb P ( \langle y, \Pi_{\upzeta_u} \mathcal{Y}_u \rangle > u, \,  B_u)  
    + \bb P ( \langle y, \Pi_{\upzeta_u} \mathcal{Y}_u \rangle > u, \,  B_u^c). 
\end{align}
Now we deal with the first terms in \eqref{Pf_Thm1_Decom_P} and \eqref{Pf_Thm1_Decom_P-y}. 
Recall that $\chi_{\beta, l} (u)$ is defined in \eqref{def-chi-C-beta-l-u}. 

\begin{lemma}\label{Lem-LD-main}
Let $\beta \in (0, \rho)$ with $\rho$ defined by \eqref{Def_rho}. 
If \ref{Condi_ExpMom}, \ref{Condi_AP} and \ref{Condi_NonArith} hold, 
then for any $\epsilon >0$ and $a > \frac{3 + 2s}{2\Lambda(s)}$, as $u \to \infty$, uniformly in $l \in [0, (\log u)^{-\epsilon}]$, 
\begin{align}\label{Pf_Thm1_Yu_Au} 
\bb P (|\Pi_{\upzeta_u} \mathcal{Y}_u| > u, \,  B_u)   
  = \mathscr H_s    \frac{ \kappa(s)^{-\chi_{\beta, l} (u) }  }{ \sqrt{\log u} }
    u^{-I(\beta - l)} ( 1 + o(1)),  
\end{align}
where, with $D_n : = \{ \log |V_n| \in [- e^{\epsilon n/2a}, e^{\epsilon n/2a}] \}$, 
\begin{align}\label{def-Hs-Bn}
\mathscr H_s 
=  \frac{1}{s \sigma_s \nu_s(r_s) \sqrt{2 \pi \beta}}
     \lim_{n \to \infty} \frac{1}{ \kappa(s)^n } 
    \bb E \Big[ r_s \Big( \frac{V_n}{|V_n|} \Big) |V_n|^s; D_n \Big]. 
\end{align}
If \ref{Condi_ExpMom}, \ref{Condi_NonArith} and \ref{Condi_Kes_Weak} hold, 
then for any $\epsilon >0$ and $a > \frac{3 + 2s}{2\Lambda(s)}$, 
as $u \to \infty$, uniformly in $y \in \bb S_+^{d-1}$ and $l \in [0, (\log u)^{-\epsilon}]$, 
\begin{align}\label{Pf_Thm1_Yu_Au-y} 
\bb P ( \langle y, \Pi_{\upzeta_u} \mathcal{Y}_u \rangle > u, \,  B_u)   
  =  r_s^*(y) \mathscr H_s    \frac{ \kappa(s)^{-\chi_{\beta, l} (u) }  }{ \sqrt{\log u} }
    u^{-I(\beta - l)} ( 1 + o(1)). 
\end{align}
\end{lemma}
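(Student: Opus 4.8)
The plan is to condition on $\mathcal{Y}_u$, which by construction is independent of $\Pi_{\upzeta_u}$. Writing $F_u$ for the law of $\mathcal{Y}_u$ on $\bb R_+^d$ and $\tilde v = v/|v|$, I would start from \eqref{Pf_Identi001} to write
\begin{align*}
\bb P (|\Pi_{\upzeta_u} \mathcal{Y}_u| > u, \, B_u) = \int_{\{v \in \bb R_+^d:\, \log|v| \in [-(\log u)^{\epsilon/2},\, (\log u)^{\epsilon/2}]\}} \bb P\big( \log|\Pi_{\upzeta_u} \tilde v| > \log u - \log|v| \big)\, F_u(dv),
\end{align*}
and then insert the uniform asymptotic \eqref{Pf_LD_PRM02}. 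Since its $(1+o(1))$ is uniform in $x\in\bb S_+^{d-1}$, in $|w| \le (\log u)^{\epsilon/2}$ and in $l\in[0,(\log u)^{-\epsilon}]$, it can be pulled out of the integral; using $e^{s\log|v|}=|v|^s$ and $r_s(\tilde v)=r_s(\mathcal{Y}_u/|\mathcal{Y}_u|)$, this gives
\begin{align*}
\bb P (|\Pi_{\upzeta_u} \mathcal{Y}_u| > u, \, B_u) = \frac{\kappa(s+l_s)^{\upzeta_u - (\beta-l)\log u}}{s \sigma_s \nu_s(r_s) \sqrt{2\pi\beta\log u}}\, u^{-I(\beta-l)}\, \bb E\Big[ r_s\Big( \frac{\mathcal{Y}_u}{|\mathcal{Y}_u|} \Big) |\mathcal{Y}_u|^s \mathds 1_{B_u} \Big]\, (1+o(1)).
\end{align*}
Running the same computation with \eqref{Pf_LD_PRM02-y} in place of \eqref{Pf_LD_PRM02} produces the extra factor $r_s^*(y)$ and the corresponding identity for $\langle y, \Pi_{\upzeta_u} \mathcal{Y}_u \rangle$, starting from \eqref{Pf_Identi001-y}.

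Next I would evaluate the truncated moment. By the i.i.d.\ structure and shift invariance, $\mathcal{Y}_u$ has the same law as the perpetuity sequence $V_{m_u}$ with $m_u := \beta_u - \upzeta_u$; by \eqref{def-upzeta-u-beta-u} one has $m_u = a\log\log u + O(1)$, so $m_u\to\infty$ uniformly in $l\in[0,(\log u)^{-\epsilon}]$, and $e^{\epsilon m_u/2a}$ is comparable up to bounded multiplicative constants to $(\log u)^{\epsilon/2}$. To show that the limit in \eqref{def-Hs-Bn} exists, is finite and positive, and that the precise cutoff is irrelevant, I would combine Lemma \ref{Lem_Martin} --- which gives $\ell_s:=\lim_n \kappa(s)^{-n}\bb E[|V_n|^s r_s(V_n/|V_n|)]\in(0,\infty)$ --- with the tail bound $\bb E(|V_n|^{s+\eta}) \le c\,\kappa(s+\eta)^n$ for a small $\eta>0$ (obtained by Minkowski's inequality and Lemma \ref{Lem_kappa}, exactly as in the proof of Lemma \ref{Lem_Martin}): this yields $\bb E[|V_n|^s r_s(\cdot)\mathds 1_{D_n^c}] \le C\big( e^{-\eta e^{\epsilon n/2a}}\kappa(s+\eta)^n + e^{-s e^{\epsilon n/2a}} \big) = o(\kappa(s)^n)$, since $e^{\epsilon n/2a}$ dominates every linear function of $n$, and the identical estimate applies with the cutoff $(\log u)^{\epsilon/2}$ replacing $e^{\epsilon n/2a}$. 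Hence the $D_n$-truncated limit equals $\ell_s$, so $\mathscr H_s = \ell_s/(s\sigma_s\nu_s(r_s)\sqrt{2\pi\beta})$ is well defined, and with $n=m_u$,
\begin{align*}
\bb E\Big[ r_s\Big( \frac{\mathcal{Y}_u}{|\mathcal{Y}_u|} \Big) |\mathcal{Y}_u|^s \mathds 1_{B_u} \Big] = \kappa(s)^{m_u}\big( \ell_s + o(1) \big),
\end{align*}
the $o(1)$ being uniform in $l$ because it depends on $l$ only through $m_u$.

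Finally I would reconcile the exponential factors. Since $l_s = O((\log u)^{-\epsilon})$ and $|\upzeta_u - (\beta-l)\log u| = O(\log\log u)$, taking logarithms and using $\Lambda(s+l_s) = \Lambda(s) + O(l_s)$ gives $\kappa(s+l_s)^{\upzeta_u - (\beta-l)\log u} = \kappa(s)^{\upzeta_u - (\beta-l)\log u}(1+o(1))$. Multiplying by $\kappa(s)^{m_u}$ and using $m_u = \beta_u - \upzeta_u$, $\beta_u = \floor{(\beta-l)\log u}$ and \eqref{def-chi-C-beta-l-u},
\begin{align*}
\kappa(s+l_s)^{\upzeta_u - (\beta-l)\log u}\, \kappa(s)^{m_u} = \kappa(s)^{\beta_u - (\beta-l)\log u}(1+o(1)) = \kappa(s)^{-\chi_{\beta,l}(u)}(1+o(1)).
\end{align*}
Substituting this and the expression for the truncated moment into the displayed identity of the first paragraph yields \eqref{Pf_Thm1_Yu_Au}, and the same steps with the extra factor $r_s^*(y)$ give \eqref{Pf_Thm1_Yu_Au-y}. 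I expect the main obstacle to be the uniformity bookkeeping: both $m_u$ and the truncation level in $B_u$ depend on $l$, so one must check that Lemma \ref{Lem_Martin}'s convergence, the tail estimate, and the comparison $e^{\epsilon m_u/2a}\asymp(\log u)^{\epsilon/2}$ all hold uniformly over $l\in[0,(\log u)^{-\epsilon}]$ --- which works because $m_u\to\infty$ uniformly while $(\log u)^{\epsilon/2}$ is super-polynomially large in $m_u$. The secondary, routine point is that the $(1+o(1))$ of \eqref{Pf_LD_PRM02}/\eqref{Pf_LD_PRM02-y} may be moved outside the integral over $B_u$, which is immediate from its uniformity in $w$ and $x$.
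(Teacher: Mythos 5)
Your proposal is correct and follows essentially the same route as the paper: insert the uniform Bahadur--Rao--Petrov asymptotic \eqref{Pf_LD_PRM02} (resp.\ \eqref{Pf_LD_PRM02-y}) under the integral over $B_u$, identify $\mathcal{Y}_u$ in law with $V_{\beta_u-\upzeta_u}$ so the truncated moment is governed by the submartingale limit of Lemma \ref{Lem_Martin}, and reconcile $\kappa(s+l_s)$ with $\kappa(s)$ via $l_s=O((\log u)^{-\epsilon})$ and $\beta_u-\upzeta_u=O(\log\log u)$, yielding the factor $\kappa(s)^{-\chi_{\beta,l}(u)}$. The only (harmless) difference is that you also prove inside the lemma, by a Chernoff/Minkowski tail estimate, that the truncation in $\mathscr H_s$ is immaterial, a point the paper defers to the end of the proof of Theorem \ref{Thm_LD_Perpe_Petrov} (where it shows $\mathscr H_s=\varkappa_s$ using Lemma \ref{Lem_Yn_u}).
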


\begin{proof}
We first note that $\mathscr H_s$ is finite, by Lemma \ref{Lem_Martin} and the positivity of $r_s$. 
Using \eqref{Pf_Identi001} and \eqref{Pf_LD_PRM02} with $x = \mathcal{X}_u$ and $w = \log |\mathcal{Y}_u|$, 
we get that, as $u \to \infty$, uniformly in $l \in [0, (\log u)^{-\epsilon}]$, 
\begin{align}
 \bb P (|\Pi_{\upzeta_u} \mathcal{Y}_u| > u, \,  B_u)   
& =  \bb P( \log |\Pi_{\upzeta_u} \mathcal{X}_u| > \log u - \log |\mathcal{Y}_u|, \,  B_u)   \notag\\
& = \int_{B_u} \frac{ r_s( \mathcal{X}_u ) u^{- I(\beta - l)} }{s \sigma_s \nu_s(r_s) \sqrt{2 \pi \beta \log u}}
     |\mathcal{Y}_u|^s  \kappa(s+l_s)^{\upzeta_u - (\beta-l) \log u}
     d \bb P ( 1 + o(1))  \nonumber\\
& = \bb E \Big[ r_s( \mathcal{X}_u ) |\mathcal{Y}_u|^s; B_u \Big]
    \frac{ u^{- I(\beta - l)}  \kappa(s+l_s)^{\upzeta_u - (\beta-l) \log u} }
    {s \sigma_s \nu_s(r_s) \sqrt{2 \pi \beta \log u}}
     ( 1 + o(1))          \nonumber\\
&  = \frac{1}{ \kappa(s+l_s)^{\beta_u - \upzeta_u} } 
 \bb E \Big[ r_s( \mathcal{X}_u ) |\mathcal{Y}_u|^s; B_u \Big]   \nonumber\\
&  \qquad \times    \frac{ u^{-I(\beta - l)}  \kappa(s+l_s)^{-(\beta - l) \log u + \floor{(\beta - l) \log u} } }
    {s \sigma_s \nu_s(r_s) \sqrt{2 \pi \beta \log u}}
     ( 1 + o(1)),  \label{Pf_Thm1_Yu_Au_a} 
\end{align}
where in the last line we used $\beta_u = \floor{ (\beta - l) \log u}$. 
Since $|l_s| \leq c (\log u)^{-\epsilon}$, by Taylor's formula, one can verify that
\begin{align}\label{Approxi-kappa-upzeta}
\lim_{u \to \infty} \frac{ \kappa(s)^{\beta_u - \upzeta_u} }{ \kappa(s+l_s)^{\beta_u - \upzeta_u}} = 1. 
\end{align}
In view of \eqref{Def_Yn01} and \eqref{Pf_Def_Yu}, we see that
the distribution of $\mathcal{Y}_u$ coincides with that of the random variable $V_{\floor{ \beta_u - \upzeta_u } }$, so that
\begin{align}\label{limit-Hs-Bn}
& \frac{1}{s \sigma_s \nu_s(r_s) \sqrt{2 \pi \beta}}
\lim_{u \to \infty} \frac{1}{ \kappa(s)^{ \beta_u - \upzeta_u }} 
  \bb E \Big[ r_s( \mathcal{X}_u ) |\mathcal{Y}_u|^s; B_u \Big]   \notag\\
&  = \frac{1}{s \sigma_s \nu_s(r_s) \sqrt{2 \pi \beta}} \lim_{u \to \infty} \frac{1}{ \kappa(s)^{ \beta_u - \upzeta_u } } 
    \bb E \Big[ r_s \Big( \frac{V_{ \beta_u - \upzeta_u } }{|V_{ \beta_u - \upzeta_u } |} \Big) |V_{ \beta_u - \upzeta_u } |^s; B_u' \Big] 
    = \mathscr H_s,  
\end{align}
where $B_u' = \{ \log | V_{\floor{ \beta_u - \upzeta_u } } | \in [- (\log u)^{\epsilon/2}, (\log u)^{\epsilon/2}] \}$. 
Therefore, combining \eqref{Pf_Thm1_Yu_Au_a}, \eqref{Approxi-kappa-upzeta} and \eqref{limit-Hs-Bn}, 
we conclude the proof of \eqref{Pf_Thm1_Yu_Au}. 
The proof of \eqref{Pf_Thm1_Yu_Au-y} is similar by using \eqref{Pf_LD_PRM02-y} instead of \eqref{Pf_LD_PRM02}. 
\end{proof}

Next we show that the second terms in \eqref{Pf_Thm1_Decom_P} and \eqref{Pf_Thm1_Decom_P-y} are negligible compared with the first terms. 
In the proof, the condition $s \in \frac{1}{2} I_{\mu}^{\circ}$ is used. 

\begin{lemma}\label{Lem-LD-smallterm}
Assume \ref{Condi_ExpMom}, \ref{Condi_AP} and \ref{Condi_NonArith}. 
Let $\beta = \frac{1}{\Lambda'(s)}$ be such that $2 s \in I_{\mu}^{\circ}$ and $\beta \in (0, \rho)$, with $\rho$ defined by \eqref{Def_rho}.
Then, as $u \to \infty$, uniformly in $y \in \bb S_+^{d-1}$ and $l \in [0, (\log u)^{-\epsilon}]$,  
\begin{align}\label{Pf_Thm1_Yu_Bu} 
\bb P (|\Pi_{\upzeta_u} \mathcal{Y}_u| > u, \,  B_u^c)   
  = o \left( \frac{ u^{-I(\beta - l)} }{\sqrt{\log u}}  \right),  
  \qquad 
  \bb P ( \langle y, \Pi_{\upzeta_u} \mathcal{Y}_u \rangle > u, \,  B_u^c)   
  = o \left( \frac{ u^{-I(\beta - l)} }{\sqrt{\log u}}  \right).  
\end{align}
\end{lemma}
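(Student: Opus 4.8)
The plan is to prove only the first estimate in \eqref{Pf_Thm1_Yu_Bu}: since $\langle y, \Pi_{\upzeta_u}\mathcal Y_u\rangle \leq |\Pi_{\upzeta_u}\mathcal Y_u|$ for every $y \in \bb S_+^{d-1}$, the bound for $\langle y, \Pi_{\upzeta_u}\mathcal Y_u\rangle$ is an immediate consequence of the one for $|\Pi_{\upzeta_u}\mathcal Y_u|$, uniformly in $y$. First I would record the two structural facts that drive the argument: by \eqref{Pf_Def_Yu} the matrix product $\Pi_{\upzeta_u}$ is independent of $(\mathcal Y_u, B_u)$; and, by \eqref{Def_Yn01} and \eqref{Pf_Def_Yu}, $\mathcal Y_u$ has the law of $V_m$ with $m := \beta_u - \upzeta_u = a\log\log u + O(1)$ (by \eqref{def-upzeta-u-beta-u}). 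As in \eqref{beta-l-Lambda} I fix $l_s = O((\log u)^{-\epsilon})$ with $(\beta-l)^{-1}=\Lambda'(s+l_s)$, so that $s+l_s$ lies in a fixed compact subset $K\subset I_\mu^\circ$ for all large $u$.

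Then I would split $B_u^c = B_u^{c,-}\cup B_u^{c,+}$, where $B_u^{c,-}=\{\log|\mathcal Y_u|<-(\log u)^{\epsilon/2}\}$ (the ``$\mathcal Y_u$ small'' part) and $B_u^{c,+}=\{\log|\mathcal Y_u|>(\log u)^{\epsilon/2}\}$ (the ``$\mathcal Y_u$ large'' part). For both parts the common first step is: conditioning on $(M_{\upzeta_u+1},Q_{\upzeta_u+1},\dots)$, estimating $\bb P(|\Pi_{\upzeta_u}v|>u)\leq \bb P(\|\Pi_{\upzeta_u}\|>u/|v|)\leq u^{-(s+l_s)}|v|^{s+l_s}\bb E\|\Pi_{\upzeta_u}\|^{s+l_s}$ by Markov's inequality, and then applying Lemma \ref{Lem_kappa} (with constant uniform over $K$), which yields
\[\bb P\big(|\Pi_{\upzeta_u}\mathcal Y_u|>u,\, B_u^{c,\pm}\big)\leq c\, u^{-(s+l_s)}\kappa(s+l_s)^{\upzeta_u}\,\bb E\!\left[|\mathcal Y_u|^{s+l_s}\mathds 1_{B_u^{c,\pm}}\right].\]
Using $(\beta-l)^{-1}=\Lambda'(s+l_s)$, the identity \eqref{Pf_Iden_Ibeta} and $\upzeta_u=(\beta-l)\log u-a\log\log u+O(1)$, the prefactor equals $u^{-I(\beta-l)}(\log u)^{-a\Lambda(s+l_s)}e^{O(1)}$, which is $\leq c\,u^{-I(\beta-l)}$ since $\Lambda(s+l_s)>0$; because $l_s=O((\log u)^{-\epsilon})$ and $s+l_s\in K$, all constants here are uniform in $l\in[0,(\log u)^{-\epsilon}]$.

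For the small part $B_u^{c,-}$, on that event $|\mathcal Y_u|^{s+l_s}\leq e^{-(s+l_s)(\log u)^{\epsilon/2}}$, so the remaining expectation is bounded by that quantity, and the whole bound is $o(u^{-I(\beta-l)}/\sqrt{\log u})$. For the large part $B_u^{c,+}$ — which I expect to be the main obstacle — one cannot run Markov at exponent $2s$ for $\Pi_{\upzeta_u}$, because convexity of $\Lambda$ then forces the wrong polynomial order; the point is instead to keep exponent $s+l_s$ for the matrix factor (producing exactly $u^{-I(\beta-l)}$) and to trade up to exponent $2s$ only for the vector factor: on $\{\log|\mathcal Y_u|>(\log u)^{\epsilon/2}\}$ one has $|\mathcal Y_u|^{s+l_s}\leq e^{-(s-l_s)(\log u)^{\epsilon/2}}|\mathcal Y_u|^{2s}$, and $\bb E|\mathcal Y_u|^{2s}=\bb E|V_m|^{2s}\leq c\,\kappa(2s)^m$ uniformly in $m$ by the moment bound in the proof of Lemma \ref{Lem_Martin} — this is precisely where the hypothesis $2s\in I_\mu^\circ$ is used. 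Since $\kappa(2s)^m=(\log u)^{a\Lambda(2s)}e^{O(1)}$ grows only polynomially in $\log u$ while $e^{-(s-l_s)(\log u)^{\epsilon/2}}$ is $o((\log u)^{-N})$ for every $N$, this contribution is again $o(u^{-I(\beta-l)}/\sqrt{\log u})$, uniformly in $l\in[0,(\log u)^{-\epsilon}]$ and in $y$. The only genuine care required throughout is bookkeeping the uniformity in $l$ (through $l_s=O((\log u)^{-\epsilon})$ and $s+l_s\in K$) and, for the second estimate in \eqref{Pf_Thm1_Yu_Bu}, the reduction to the first via $\langle y,\cdot\rangle\leq|\cdot|$.
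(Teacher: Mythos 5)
Your argument is correct, and its overall architecture coincides with the paper's: reduce to the norm via $\langle y,\cdot\rangle\leq|\cdot|$, split $B_u^c$ according to whether $|\mathcal Y_u|$ is small or large, and for the matrix factor apply Markov's inequality at the exponent $s+l_s$ together with Lemma \ref{Lem_kappa} and the identity \eqref{Pf_Iden_Ibeta}, keeping track of uniformity through $l_s=O((\log u)^{-\epsilon})$. Your treatment of the small part matches the paper's $I_1(u)$ bound. Where you genuinely differ is the large part: the paper slices $\{\log|\mathcal Y_u|>(\log u)^{\epsilon/2}\}$ into slabs indexed by $m$, bounds each slab by a product of a Markov estimate for $\|\Pi_{\upzeta_u}\|$ and a tail estimate for $|\mathcal Y_u|=|V_{\beta_u-\upzeta_u}|$ at exponent $s+t$ with $t>s$ (Remark \ref{Rem-Inequality-Vn}), and sums the resulting series; you avoid the slicing entirely by trading the indicator for the factor $e^{-(s-l_s)(\log u)^{\epsilon/2}}|\mathcal Y_u|^{2s-s-l_s}$ and invoking the uniform moment bound $\bb E|V_m|^{2s}\leq c\,\kappa(2s)^m$ from Lemma \ref{Lem_Martin}, exploiting that $m=O(\log\log u)$ makes $\kappa(2s)^m$ only polylogarithmic. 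Both routes use $2s\in I_\mu^\circ$ in the same essential way, and the stretched-exponential gain beats any power of $\log u$; your version is somewhat shorter. One caveat: your use of Lemma \ref{Lem_Martin} at exponent $2s$ requires $\bb E|Q_1|^{2s}<\infty$, which is not among the lemma's stated hypotheses, but the paper's own application of Remark \ref{Rem-Inequality-Vn} implicitly needs comparable (indeed slightly stronger) moment conditions on $Q$, so this is not a gap relative to the paper's standard.
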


\begin{proof}
It suffices to prove the first asymptotic in \eqref{Pf_Thm1_Yu_Bu} since $\langle y, \Pi_{\upzeta_u} \mathcal{Y}_u \rangle \leq |\Pi_{\upzeta_u} \mathcal{Y}_u|$. 
By the definition of $B_u^c$, 
\begin{align}\label{Second-Bu-decom}
\bb P (|\Pi_{\upzeta_u} \mathcal{Y}_u| > u, \,  B_u^c) 
& =  \bb P \left(|\Pi_{\upzeta_u} \mathcal{Y}_u| > u, \,  \log |\mathcal{Y}_u| < - (\log u)^{\epsilon/2} \right)  
  +  \bb P \big( |\Pi_{\upzeta_u} \mathcal{Y}_u| > u, \,  \log |\mathcal{Y}_u| >  (\log u)^{\epsilon/2} \big)  \notag\\
 & = : I_1(u) + I_2(u). 
\end{align}
For $I_1(u)$, we have 
\begin{align*}
I_1(u) 
& \leq  \bb P \left( \log \|\Pi_{\upzeta_u} \|  > \log u - \log |\mathcal{Y}_u|,  \,  \log |\mathcal{Y}_u| < - (\log u)^{\epsilon/2} \right) 
   \nonumber\\
& \leq  \bb P \left( \log \|\Pi_{\upzeta_u} \| > \log u +  (\log u)^{\epsilon/2} \right)   \nonumber\\
&  =  \bb P \left( \|\Pi_{\upzeta_u} \| > \exp \left\{ \log u +  (\log u)^{\epsilon/2} \right\} \right). 
\end{align*}
By Markov's inequality, Lemma \ref{Lem_kappa} and the fact that $\upzeta_u \leq (\beta - l) \log u$, it follows that
\begin{align}\label{Second-Bu-bound-I1}
I_1(u) 
&  \leq  \exp \left\{ - (s+l_s) \log u -  (s + l_s)(\log u)^{\epsilon/2} \right\}  
  \bb E \big( \|\Pi_{\upzeta_u} \|^{s+l_s} \big)   \nonumber\\
&  \leq  c  \exp \left\{ -(s+l_s) \log u -  (s+l_s)(\log u)^{\epsilon/2}  \right\}  \kappa(s+l_s)^{(\beta - l) \log u}   \nonumber\\
&  =  o \left( \frac{1}{\sqrt{\log u}}  \kappa(s+l_s)^{(\beta - l) \log u} u^{-(s+l_s)} \right)   \nonumber\\
&  =  o \left( \frac{ u^{-I(\beta - l)} }{\sqrt{\log u}}   \right),  
\end{align}
where in the last line we used \eqref{Pf_Iden_Ibeta}.

For $I_2(u)$, 
since $\log |\Pi_{\upzeta_u} \mathcal{Y}_u| \leq \log \|\Pi_{\upzeta_u} \| + \log |\mathcal{Y}_u|$ 
and $\Pi_{\upzeta_u}$ is independent of $\mathcal{Y}_u$,  it holds that
\begin{align*}
I_2(u) 
&  = \sum_{m=1}^{\infty}  \bb P \left( |\Pi_{\upzeta_u} \mathcal{Y}_u| > u,  \,  
     (m+1) (\log u)^{\epsilon/2} \geq  \log |\mathcal{Y}_u| >  m (\log u)^{\epsilon/2}  \right)  \nonumber\\
&  \leq \sum_{m=1}^{\infty}  \bb P \Big( \log \|\Pi_{\upzeta_u} \| > \log u - (m+1) (\log u)^{\epsilon/2}, \,  
             \log |\mathcal{Y}_u| >  m (\log u)^{\epsilon/2}  \Big)  \nonumber\\
&  =  \sum_{m=1}^{\infty} \bb P \left( \| \Pi_{\upzeta_u} \| >  u \exp \left\{ - (m+1) (\log u)^{\epsilon/2} \right\}  \right)
                \bb P \left(  \log |\mathcal{Y}_u| >  m (\log u)^{\epsilon/2}  \right).
\end{align*}
By Markov's inequality and Lemma \ref{Lem_kappa}, we get
\begin{align*}
&  \bb P \left( \| \Pi_{\upzeta_u} \| >  u \exp \left\{ - (m+1) (\log u)^{\epsilon/2} \right\} \right)   \nonumber\\
& \leq   \exp \left\{ (s + l_s) (m+1) (\log u)^{\epsilon/2} \right\}  u^{-(s + l_s)} \bb E \left( \|\Pi_{\upzeta_u}\|^{s + l_s} \right)  \nonumber\\
& \leq  c \, \exp \left\{ (s + l_s) (m+1) (\log u)^{\epsilon/2} \right\}  u^{-(s + l_s)}  \kappa(s + l_s)^{\upzeta_u}. 
\end{align*}
Since $\upzeta_u \leq (\beta - l) \log u$, 
by \eqref{Pf_Iden_Ibeta}  it follows that 
\begin{align*}
 \bb P \left( \| \Pi_{\upzeta_u} \| >  u \exp \left\{ - (m+1) (\log u)^{\epsilon/2} \right\} \right)  
 \leq  c \,  u^{-I(\beta - l)}  
     \exp \left\{ s (m+1) (\log u)^{\epsilon/2} \right\}. 
\end{align*}
Since $2s \in I_{\mu}^{\circ}$,  we can choose $t > s$
and apply \eqref{reamrk-Vn-u01} of Remark \ref{Rem-Inequality-Vn} with 
$n = \floor{ \beta_u - \upzeta_u } \leq a \log\log u$ and $u = \exp \big\{ m (\log u)^{\epsilon/2} \big\}$ to get
that there exists $s_1 \in (s, s+t)$ such that 
\begin{align*}
 \bb P \Big(  \log |\mathcal{Y}_u| >  m (\log u)^{\epsilon/2}  \Big)  
& \leq  \bb P \Big( |V_{ \floor{ \beta_u - \upzeta_u } }| >  \exp \big\{ m (\log u)^{\epsilon/2} \big\}  \Big)  \nonumber\\
& \leq  c ( \log\log u)^{1+ s +t}  
     \exp \left\{ a \left( t \Lambda'(s) + \frac{1}{2} t^2 \Lambda''(s_1) \right) \log\log u \right\}    \nonumber\\
& \quad \times  \kappa(s)^{a \log\log u}  \exp \left\{ - (s+t) m (\log u)^{\epsilon/2} \right\}. 
\end{align*}
Combining the above estimates and summing up over $m$, we derive that there  exist constants $M_1, M_2 >0$ such that 
\begin{align}\label{Second-Bu-bound-I2}
I_2(u) 
& \leq  c \,  u^{-I(\beta - l)}   (\log\log u)^{1+s+t}  \kappa(s)^{a \log\log u}    
    \exp \left\{ a \left( t \Lambda'(s) + \frac{1}{2} t^2 \Lambda''(s_1) \right) \log\log u \right\}   \nonumber\\
&   \qquad \times   \sum_{m=1}^{\infty}  \exp \left\{ (s - tm) (\log u)^{\epsilon/2} \right\}      \nonumber\\
&  \leq  c \, u^{-I(\beta - l)}  (\log u)^{M_1}  \exp \left\{ (s-t) (\log u)^{\epsilon/2} \right\}      \nonumber\\
& \leq  c \, u^{-I(\beta - l)} (\log u)^{-M_2},
\end{align}
where in the last inequality we used $t >s$. 
Substituting \eqref{Second-Bu-bound-I1} and \eqref{Second-Bu-bound-I2} into \eqref{Second-Bu-decom} 
concludes the proof of \eqref{Pf_Thm1_Yu_Bu}. 
\end{proof}

\begin{proof}[End of the proof of Theorem \ref{Thm_LD_Perpe_Petrov}]
We first prove \eqref{Petrov_LD_tau}. 
Combining \eqref{Pf_Thm1_Decom_P}, \eqref{Pf_Thm1_Yu_Au} and \eqref{Pf_Thm1_Yu_Bu}, 
we get that, as $u \to \infty$, uniformly in $l \in [0, (\log u)^{-\epsilon}]$, 
\begin{align*}
\bb P (|\Pi_{\upzeta_u} \mathcal{Y}_u| > u )   
  = \mathscr H_s    \frac{ \kappa(s)^{-\chi_{\beta, l} (u) }  }{ \sqrt{\log u} }
    u^{-I(\beta - l)} ( 1 + o(1)),  
\end{align*}
where $\mathscr H_s$ and $\chi_{\beta, l} (u)$ are defined by \eqref{def-Hs-Bn} and \eqref{def-chi-C-beta-l-u}, respectively. 
Now we prove that $\mathscr H_s = \varkappa_s$ with $\varkappa_s$ defined by \eqref{Def_C_beta}. 
It suffices to show that 
\begin{align}
J_1 &:= \lim_{n \to \infty} \frac{1}{ \kappa(s)^n } 
    \bb E \Big[ r_s \Big( \frac{V_n}{|V_n|} \Big) |V_n|^s;  \,  \log |V_n| < - e^{\epsilon n/2a}  \Big]  = 0,   \label{limit-kappa-s-aa}\\
J_2 &:=  \lim_{n \to \infty} \frac{1}{ \kappa(s)^n } 
    \bb E \Big[ r_s \Big( \frac{V_n}{|V_n|} \Big) |V_n|^s;  \,  \log |V_n| >  e^{\epsilon n/2a}  \Big]  = 0.   \label{limit-kappa-s-bb}
\end{align}
For \eqref{limit-kappa-s-aa},  since the eigenfunction $r_s$ is bounded on $\bb S_+^{d-1}$, we see that
\begin{align*}
J_1 \leq  \lim_{n \to \infty} \frac{c}{\kappa(s)^n} \bb E \Big[ |V_n|^s; \log |V_n| < - e^{\epsilon n/2a} \Big]  
  \leq  c \lim_{n \to \infty}  e^{-n \Lambda(s)} \exp \left\{ - s e^{\epsilon n/2a} \right\} = 0. 
\end{align*}
For \eqref{limit-kappa-s-bb}, using again that $r_s$ is bounded on $\bb S_+^{d-1}$, we get
\begin{align}\label{pf-J2-upper-bound}
J_2  & \leq \lim_{n \to \infty} \frac{c}{\kappa(s)^n} \bb E \Big[ |V_n|^s; \log |V_n| > e^{\epsilon n/2a} \Big]   \notag\\
& = \lim_{n \to \infty} \frac{c}{\kappa(s)^n}  \sum_{m=0}^{\infty}
     \bb E \Big[ |V_n|^s; \log |V_n| - e^{\epsilon n/2a} \in (m, m+1] \Big]   \notag\\
& \leq  \lim_{n \to \infty} \frac{c}{\kappa(s)^n}  \exp \left\{ s e^{\epsilon n/2a} \right\} 
  \sum_{m=0}^{\infty}  e^{s (m+1)} 
    \bb P \left( |V_n| > e^m \exp \{ e^{\epsilon n/2a} \} \right). 
\end{align}
By Lemma \ref{Lem_Yn_u}, there exists a constant $\ee_1 >0$ such that 
\begin{align*}
 \bb P \left( |V_n| > e^m \exp \{ e^{\epsilon n/2a} \} \right)   
 \leq c \,  n^{1+ s + \ee_1} 
\exp \left\{ n (\ee_1 \Lambda'(s) + \ee_1^2 \Lambda''(s) ) \right\}  \kappa(s)^n  
    e^{ -(s+\ee_1) m} \exp \left\{ -(s+\ee_1) e^{\epsilon n/2a} \right\}.
\end{align*}
Substituting this into \eqref{pf-J2-upper-bound} yields that 
\begin{align*}
J_2   \leq  c \sum_{m=0}^{\infty}  e^{ - \ee_1 m}  \lim_{n \to \infty}  n^{1+ s + \ee_1} 
\exp \left\{ n (\ee_1 \Lambda'(s) + \ee_1^2 \Lambda''(s) ) \right\}  
  \exp \left\{ - \ee_1 e^{\epsilon n/2a} \right\}
  = 0, 
\end{align*}
which proves \eqref{limit-kappa-s-bb}. 
Therefore,   $\mathscr H_s = \varkappa_s$ and, as $u \to \infty$, uniformly in $l \in [0, (\log u)^{-\epsilon}]$, 
\begin{align}\label{Asymptotic-Pi-Yu}
\bb P (|\Pi_{\upzeta_u} \mathcal{Y}_u| > u )   
  = \varkappa_s    \frac{ \kappa(s)^{-\chi_{\beta, l} (u) }  }{ \sqrt{\log u} }
    u^{-I(\beta - l)} ( 1 + o(1)). 
\end{align}
It remains to show that the first term $Y_{\upzeta_u}$ in \eqref{Pf_Yn_Equ01} negligible 
compared with the second term $\Pi_{\upzeta_u} \mathcal{Y}_u$. 
By Lemma \ref{Lem_tau_Low}, there exists a function $a(u) \downarrow 0$ ($u \to \infty$) 
such that uniformly in $l \in [0, (\log u)^{-\epsilon}]$, 
\begin{align}\label{Asypmtotic-Y-upzeta-u}
\bb P ( |Y_{\upzeta_u}| > a(u) u ) = o \left( \frac{u^{-I(\beta - l)}}{ \sqrt{ \log u } } \right). 
\end{align}
In view of \eqref{Pf_Yn_Equ01}, we have $Y_{\beta_u} = Y_{\upzeta_u} + \Pi_{\upzeta_u} \mathcal{Y}_u$. 
Hence
\begin{align*}
\bb P ( |Y_{\beta_u}| > u ) \geq \bb P ( |\Pi_{\upzeta_u} \mathcal{Y}_u| > u )
\end{align*}
and
\begin{align*}
\bb P ( |Y_{\beta_u}| > u ) 
& =  \bb P ( |Y_{\beta_u}| > u, |Y_{\upzeta_u}| \leq a(u) u )
   + \bb P ( |Y_{\beta_u}| > u, |Y_{\upzeta_u}| > a(u) u )  \nonumber\\
& \leq \bb P ( |\Pi_{\upzeta_u} \mathcal{Y}_u| > (1 - a(u))u ) + \bb P ( |Y_{\upzeta_u}| > a(u) u ). 
\end{align*}
Therefore, using \eqref{Asymptotic-Pi-Yu} and \eqref{Asypmtotic-Y-upzeta-u},
we conclude the proof of \eqref{Petrov_LD_tau}. 
The proof of \eqref{Petrov_LD_tau_y} is similar 
by using \eqref{Pf_Thm1_Decom_P-y}, \eqref{Pf_Thm1_Yu_Au-y} and \eqref{Pf_Thm1_Yu_Bu}. 
\end{proof}

\subsection{Proof of Theorem \ref{Cor_LLT_LD_tau}}
We only give a proof of the first assertion in Theorem \ref{Cor_LLT_LD_tau} since the second one can be carried out in the same way. 
By Theorem \ref{Thm_LD_Perpe_Petrov}, for any fixed $a \in \bb R_-$ and $m \in \bb Z_+$ with $a + m \leq 0$, 
there exists a sequence $(r_u)_{u >0}$ determined by the
matrix law $\mu$ satisfying $r_u \to 0$ as $u \to \infty$, such that  
uniformly in  $l \in [0, (\log u)^{-\epsilon}]$, 
\begin{align}\label{Pf_Pet_LD_tau}
 \bb P (\tau_u \leq (\beta - l) \log u + a + m)  
& = \frac{\varkappa_s \kappa(s)^{- (\beta - l) \log u - (a + m) + \floor{(\beta - l) \log u + (a + m)} } }{\sqrt{\log u}} 
   u^{-I \big( \beta - l + \frac{a + m}{\log u} \big)} (1 + r_u)  \nonumber \\
& = \frac{\varkappa_s \kappa(s)^{- (\beta - l) \log u - a + \floor{(\beta - l) \log u + a} } }{\sqrt{\log u}} 
   u^{-I \big( \beta - l + \frac{a + m}{\log u} \big)} (1 + r_u), 
\end{align}
where in the last equality we used the fact that $m \in \bb Z$. 
We make the difference of \eqref{Pf_Pet_LD_tau} with $m = 0$ and with $m > 0$ to obtain that,
as $u \to \infty$,
\begin{align}\label{Pf_LLT_LD_Diff}
& \bb P \Big(\tau_u - (\beta - l) \log u  \in (a, a + m ] \Big)  
 = \frac{\varkappa_s \kappa(s)^{- (\beta - l) \log u - a + \floor{(\beta - l) \log u + a} } }{\sqrt{\log u}}  \nonumber\\
& \qquad \qquad \qquad \times    
 \left[ u^{-I \big( \beta - l + \frac{a + m}{\log u} \big)}  (1 + r_u) 
      -  u^{-I \big( \beta - l + \frac{a}{\log u} \big)}  (1 + r_u')  \right]. 
\end{align}
From the definition of the rate function $I$, we see that it is analytic. 
By Lemma \ref{Lem_Expan_Rate} and Taylor's formula, we get that uniformly in $l \in [0, (\log u)^{-\epsilon}]$, 
\begin{align*}
u^{-I \big( \beta - l + \frac{a + m}{\log u} \big)} 
& = u^{-I( \beta - l)} u^{- I'(\beta - l) \frac{a + m}{\log u}}  (1 + o(1))  \nonumber\\ 
& = u^{-I( \beta - l)} e^{- (a + m) I'(\beta - l) }  (1 + o(1))  \nonumber\\ 
& = u^{-I( \beta - l)} e^{- (a + m) I'(\beta) }  (1 + o(1)),   
\end{align*}
and 
\begin{align*}
u^{-I \big( \beta - l + \frac{a}{\log u} \big)}
 = u^{-I( \beta - l)} u^{- I'(\beta - l) \frac{a}{\log u}}  (1 + o(1)) 
 =  u^{-I( \beta - l)} e^{- a I'(\beta) }  (1 + o(1)).  
\end{align*}
Hence 
\begin{align*}
  u^{-I \big( \beta - l + \frac{a + m}{\log u} \big)}  (1 + r_u) 
      -  u^{-I \big( \beta - l + \frac{a}{\log u} \big)}  (1 + r_u')  
 =  u^{-I( \beta - l)} e^{- a I'(\beta) } \left[ e^{- m I'(\beta) } - 1 \right]  (1 + o(1)).  
\end{align*}
Substituting this into \eqref{Pf_LLT_LD_Diff} finishes the proof of Theorem \ref{Cor_LLT_LD_tau}. 

\begin{acks}[Acknowledgments]
The authors would like to thank the referees  and the editor for 
very constructive comments and pointing out additional references which contributed to improve the presentation of the paper. 

Hui Xiao is corresponding author. 
\end{acks}
\begin{funding}
%
The authors  were supported by DFG grant ME 4473/2-1. 
Hui Xiao was also supported by the National Natural Science Foundation of China (No. 12288201). 
\end{funding}



\begin{thebibliography}{100}


\bibitem{AI09} Alsmeyer G., Iksanov A.: A log-type moment result for perpetuities and its application to martingales 
in supercritical branching random walks. \textit{Electron. J. Probab.}, 2009, 14: 289--313.



\bibitem{Arf55} Arfwedson G.: Research in collective risk theory. II.
\textit{Skand. Aktuarietidskr.}, 1955, 38, 37--100.

\bibitem{Asm00}  Asmussen S.: {\it Ruin probabilities}, 
Advanced Series on Statistical Science \& Applied Probability, 2, World Sci. Publ., River Edge, NJ, 2000. 


\bibitem{BDM02a} Basrak B., Davis R. A., Mikosch T.: 
A characterization of multivariate regular variation. 
\textit{Ann. Appl. Probab.}, 2002, 12(3): 908--920. 

\bibitem{BDM02} Basrak B., Davis R. A., Mikosch T.:
Regular variation of GARCH processes. 
\textit{Stochastic Process. Appl.}, 2002, 99(1): 95--115. 


\bibitem{Bre65} Breiman L.: On some limit theorems similar to the arc-sin law. 
\textit{Theor. Probability Appl.}, 1965, 10, 323--331.	

\bibitem{Bur09} Buraczewski D.: On tails of fixed points of the smoothing transform in the boundary case. 
\textit{Stochastic Process. Appl.}, 2009, 119(11): 3955--3961.

\bibitem{BCDZ16} Buraczewski D., Collamore J., Damek E., Zienkiewicz J.:
Large deviation estimates for exceedance times of perpetuity sequences and their dual processes.
\textit{Ann. Probab.}, 2016, 44(6): 3688--3739. 

\bibitem{BDGM14} Buraczewski D., Damek E., Guivarc'h Y., Mentemeier S.:  On multidimensional Mandelbrot cascades.
\textit{J. Difference Equ. Appl.}, 2014, 20(11): 1523--1567. 

 
\bibitem{BDM16} Buraczewski D., Damek E., Mikosch T.: 
Stochastic Models with Power-Law Tails. The Equation $X = AX + B$.  
\textit{Springer International Publishing}, 2016. 

\bibitem{BDM12} Buraczewski D., Damek E., Mirek M.: Asymptotics of stationary solutions of multivariate stochastic recursions with heavy tailed inputs and related limit theorems. \textit{Stochastic Process. Appl.}, 2012, 122(1): 42--67.
 
\bibitem{BDZ18} Buraczewski D., Damek E., Zienkiewicz J.: Pointwise estimates for first passage times of perpetuity sequences. 
\textit{Stochastic Process. Appl.}, 2018, 128(9): 2923--2951.   


\bibitem{BD22} Buraczewski D., Dyszewski P.: Precise large deviation estimates for branching process in random environment
\textit{Ann. Inst. Henri Poincar\'e Probab. Stat.}, 2022, 58(3): 1669--1700.


\bibitem{BM19}  Buraczewski D., Ma\'slanka M.: 
Precise large deviations for the first passage time of a random walk with negative drift.
\textit{Proc. Amer. Math. Soc.}, 2019, 147(9): 4045--4054. 

\bibitem{BM16} Buraczewski D., Mentemeier S.: Precise large deviation results for products of random matrices.
\textit{Ann. Inst. Henri Poincar\'e Probab. Stat.}, 2016, 52(3): 1474--1513.

\bibitem{Col98}  Collamore J.F.: First passage times of general sequences of random vectors: a large deviations approach.
\textit{Stochastic Process. Appl.}, 1998,  78(1): 97--130. 

\bibitem{CM18} Collamore J. F.,  Mentemeier S.: 
Large excursions and conditioned laws for recursive sequences generated by random matrices. 
\textit{Ann. Probab.}, 2018, 46(4): 2064--2120. 

\bibitem{CV13} Collamore, J. F., Vidyashankar, A. N.: Tail estimates for stochastic fixed point equations via nonlinear renewal theory. 
\textit{Stochastic Process. Appl.}, 2013, 123(9): 3378--3429.

\bibitem{dRR89} de Haan L., Resnick S. I., Rootz\'en H., de Vries C. G.: 
Extremal behaviour of solutions to a stochastic difference equation with applications to ARCH processes. 
\textit{Stochastic Process. Appl.}, 1989, 32(2): 213--224. 


\bibitem{FK60} Furstenberg H., Kesten H.:  Products of random matrices.
\textit{Ann. Math. Statist.}, 1960, 31(2): 457--469.

\bibitem{Gra91}  Grandell J.: 
{\it Aspects of risk theory}. Springer Series in Statistics: Probability and its Applications, Springer, New York, 1991.

 
\bibitem{Gol91} Goldie C. M.: Implicit renewal theory and tails of solutions of random equations. 
\textit{Ann. Appl. Probab.}, 1991, 1(1): 126--166. 
 

\bibitem{Gui90} Guivarc'h Y.: Sur une extension de la notion de loi semi-stable.
\textit{Ann. Inst. Henri Poincar\'e Probab. Stat.}. 1990, 26(2): 261--285.


\bibitem{GSZ21}  Gurbuzbalaban M., Simsekli U., Zhu L.: The heavy-tail phenomenon in SGD. 
\textit{International Conference on Machine Learning}. PMLR, 2021: 3964--3975.


\bibitem{Hen97} Hennion H.: Limit theorems for products of positive random matrices.
\textit{Ann. Probab.}, 1997, 25(4): 1545--1587. 

 
\bibitem{HM21} Hodgkinson L., Mahoney M.: Multiplicative noise and heavy tails in stochastic optimization.
\textit{International Conference on Machine Learning}. PMLR, 2021: 4262--4274.


\bibitem{Kes73} Kesten H.: Random difference equations and renewal theory for products of random matrices.
    \textit{Acta Math.}, 1973, 131(1): 207--248.
    
\bibitem{KKS75} Kesten H., Kozlov M. V., Spitzer F.:
A limit law for random walk in a random environment. 
\textit{Compositio Math.}, 1975,  30(2): 145--168.

\bibitem{Key87} Key E. S.:
Limiting distributions and regeneration times for multitype branching processes with immigration in a random environment. 
\textit{Ann. Probab.}, 1987, 15: 344--353. 

\bibitem{Lal84} Lalley S. P.: 
Limit theorems for first-passage times in linear and nonlinear renewal theory.
\textit{Adv. in Appl. Probab.} 1984,  16(4): 766--803. 

\bibitem{Liu00} Liu Q.: On generalized multiplicative cascades. 
\textit{Stochastic Process. Appl.}, 2000, 86(2): 263--286.

\bibitem{MW2022} Mentemeier S., Wintenberger O.: Asymptotic independence ex machina: extreme value theory for the diagonal SRE model. \textit{J. Time Series Anal.}, 2022, 43(5): 750--780.

\bibitem{Mik04} Mikosch T.:
Modeling dependence and tails of financial time series. 
\textit{Monographs on Statistics and Applied Probability}, 2004, 99: 185--286.

\bibitem{Nyr94} Nyrhinen H.: 
Rough limit results for level-crossing probabilities.
\textit{J. Appl. Probab.}, 1994, 31(2): 373--382. 

 
\bibitem{Pet75} Petrov V. V.: Sums of independent random variables. \textit{Springer}, 1975. 

\bibitem{Roi07}  Roitershtein A.: A note on multitype branching processes with immigration in a random environment. 
\textit{Ann. Probab.}, 2007, 35(4): 1573--1592. 


\bibitem{Seg55} Segerdahl C.O.: 
When does ruin occur in the collective theory of risk? 
\textit{Skand. Aktuarietidskr.}, 1955, 38, 22--36. 

\bibitem{Sie75} 
Siegmund D. O.: The time until ruin in collective risk theory. 
\textit{Mitt. Verein. Schweiz. Versicherungsmath.}  1975, 75(2): 157--166. 


\bibitem{STST17}
Sousa A.M.Y.R.d., Takayasu H., Sornette D., Takayasu M.: Power-Law Distributions from Sigma-Pi Structure of Sums of Random Multiplicative Processes. \textit{Entropy}, 2017, 19(8): 417. https://doi.org/10.3390/e19080417 
	
	

\bibitem{Von 74} 
Von Bahr, B.: Ruin probabilities expressed in terms of ladder height distributions. 
\textit{Scand. Actuar. J.}  1974, 190--204. 

\bibitem{Wan13}   Wang H. M.: A note on multitype branching process with bounded immigration in random environment. 
\textit{Acta Math. Sin. (Engl. Ser.)} 2013, 29(6): 1095--1110.

\bibitem{XGL19a} Xiao, H., Grama, I., Liu, Q.: Precise large deviation asymptotics for products of random matrices. 
 \textit{Stochastic Process. Appl.}, 2020, 130(9): 5213--5242.
 
\bibitem{XGL19b} Xiao H., Grama I., Liu Q.: Berry-Esseen theorem and precise moderate deviations for 
    products of random matrices.  \textit{J. Eur. Math. Soc. (JEMS)}, 2022, 24(8): 2691--2750. 
    
\bibitem{XGL22} Xiao H., Grama I., Liu Q.: 
Edgeworth expansion and large deviations for the coefficients of products of positive random matrices. 
\textit{arXiv:2209.03158}, 2022. 

\bibitem{XGL23} Xiao H., Grama I., Liu Q.:  
Large deviation expansions for the coefficients of random walks on the general linear group.
\textit{Ann. Probab.},  2023, 51(4): 1380--1420.


\end{thebibliography}
\end{document}